\newdimen\paperwidth
\newdimen\paperheight
\def\papersize#1#2{\let\p@persize\relax\paperwidth#1\paperheight#2}
\def\Afour{\papersize{210truemm}{297truemm}}
\let\p@persize\Afour
\let\onesidestyle\@twosidefalse
\let\twosidestyle\@twosidetrue
\def\margins{\@ifnextchar[{\@margins}{\@margins[\z@]}}
\def\@margins[#1]#2#3{% #1 = rileg. #2 = margini oriz. #3 = margini vert.
  \p@persize\dimen0 #3\dimen0 .5\dimen0\normalsize%
  \oddsidemargin-1truein\advance\oddsidemargin#2%
  \evensidemargin-1truein\advance\evensidemargin#2%
  \topmargin-1truein\advance\topmargin\dimen0\headsep\dimen0\footskip\dimen0%
  \textwidth\paperwidth\advance\textwidth-#2\advance\textwidth-#2%
  \textheight\paperheight\advance\textheight-#3\advance\textheight-#3%
  \headheight\baselineskip\advance\topmargin-.5\baselineskip%
  \advance\headsep-.5\baselineskip%
  \footheight\baselineskip
  \advance\textwidth-#1\advance\oddsidemargin#1
  \if@twoside\def\@themargin%
    {\ifodd\count\z@\oddsidemargin\else\evensidemargin\fi}\fi}
\def\headlinesep#1{\advance\topmargin\headsep\advance\topmargin -#1
  \advance\topmargin.5\baselineskip\headsep #1\advance\headsep-.5\baselineskip}
\def\headline{\if@twoside\let\n@xt\h@dlin@\else\let\n@xt\h@@dlin@\fi\n@xt}
\def\h@dlin@#1#2{%
  \def\@oddhead{%
    {{\leftskip\z@\rightskip\z@\noindent\normalsize#1}}}
  \def\@evenhead{%
    {{\leftskip\z@\rightskip\z@\noindent\normalsize#2}}}}
\def\h@@dlin@#1{%
  \def\@oddhead{{{\leftskip\z@\rightskip\z@\noindent\normalsize#1}}}}
\def\footline{\if@twoside\let\n@xt\f@tlin@\else\let\n@xt\f@@tlin@\fi\n@xt}
\def\f@tlin@#1#2{%
  \def\@oddfoot{%
    {{\leftskip\z@\rightskip\z@\noindent\normalsize#1}}}
  \def\@evenfoot{%
    {{\leftskip\z@\rightskip\z@\noindent\normalsize#2}}}}
\def\f@@tlin@#1{%
  \def\@oddfoot{{{\leftskip\z@\rightskip\z@\noindent\normalsize#1}}}}
\def\normalpage{\global\@specialpagefalse}
\def\ft{\@ifnextchar[{\ft@s}{\ft@}}
\def\ft@{\ft@@@s[\f@size]}
\def\ft@s[{\@ifnextchar{a}{\ft@sz[}{\ft@@s[}}
\def\ft@@s[{\@ifnextchar{s}{\ft@sz[}{\ft@@@s[}}
\def\ft@@@s[#1]{\ft@sz[at #1pt]}
\def\ft@sz[#1]#2{\font\fonttemp=#2 #1\fonttemp\ignorespaces}
\def\@@bold{bold}
\def\widebar{\ifx\math@version\@@bold
  \let\@widebar\@@@widebar\else\let\@widebar\@@widebar\fi\@widebar}
\def\@@widebar#1{\text{\setbox15\hbox{$#1$}%
  \dimen15 0.45\wd15\advance\dimen15 0.15\ht15%
  \dimen16\ht15\advance\dimen16 0.00em\advance\dimen16 0.3ex%
  \dimen17 0.65\wd15\advance\dimen17 0.05\ht15\advance\dimen17 0.1ex%
  \dimen18 0.035em\advance\dimen18 0.00ex
  \put[\dimen15,\dimen16][c]{\vrule depth 0pt height \dimen18 width \dimen17}}#1}
\def\@@@widebar#1{\text{\setbox15\hbox{$#1$}%
  \dimen15 0.45\wd15\advance\dimen15 0.15\ht15%
  \dimen16\ht15\advance\dimen16 0.00em\advance\dimen16 0.26ex%
  \dimen17 0.65\wd15\advance\dimen17 0.05\ht15\advance\dimen17 0.1ex%
  \dimen18 0.05em\advance\dimen18 0.00ex
  \put[\dimen15,\dimen16][c]{\vrule depth 0pt height \dimen18 width \dimen17}}#1}
\def\smallsquare{\raise-.065em\hbox{$\Box$}}
\def\smallblacksquare{%
  \kern.3ex\vrule depth-.03ex height1.27ex width1.15ex \kern-1.45ex \smallsquare}
\def\smallcircc{\mathop{\mkern3.5mu\text{\raise.58ex\hbox{\ft{lcircle10}a}}}}
\def\varemptyset{{\text{\raise.21ex\hbox{$\not$}}\mkern.15mu\mathrm{O}\mkern.15mu}}
\def\put{\@ifnextchar[{\@put}{\@@rput[\z@,\z@][r]}}
\def\@put[#1]{\@ifnextchar[{\@@put[#1]}{\@@@@@put[#1]}}
\def\@@put[#1][{\@ifnextchar{l}{\@@lput[#1][}{\@@@put[#1][}}
\def\@@@put[#1][{\@ifnextchar{c}{\@@cput[#1][}{\@@@@put[#1][}}
\def\@@@@put[#1][{\@ifnextchar{r}{\@@rput[#1][}{\relax}}
\def\@@@@@put[{\@ifnextchar{l}{\@@lput[\z@,\z@][}{\@@@@@@put[}}
\def\@@@@@@put[{\@ifnextchar{c}{\@@cput[\z@,\z@][}{\@@@@@@@put[}}
\def\@@@@@@@put[{\@ifnextchar{r}{\@@rput[\z@,\z@][}{\@@@@@@@@put[}}
\def\@@@@@@@@put[#1]{\@@rput[#1][r]}
\let\hm@d@\leavevmode
\long\def\@@lput[#1,#2][l]#3{\setbox0\hbox{#3}\hm@d@\raise#2\hbox to\z@{\dimen0 #1%
  \advance\dimen0-\wd0\kern\dimen0\dp0\z@\ht0\z@\wd0\z@\box0\hss}\ignorespaces}
\long\def\@@cput[#1,#2][c]#3{\setbox0\hbox{#3}\hm@d@\raise#2\hbox to\z@{\dimen0 #1%
  \advance\dimen0-.5\wd0\kern\dimen0\dp0\z@\ht0\z@\wd0\z@\box0\hss}\ignorespaces}
\long\def\@@rput[#1,#2][r]#3{\setbox0\hbox{\kern#1\raise#2\hbox{#3}}%
  \dp0\z@\ht0\z@\wd0\z@\hm@d@\box0\ignorespaces}
\def\flbox{\@ifnextchar[{\@flbox}{\@@rflbox[\z@,\z@][r]}}
\def\@flbox[#1]{\@ifnextchar[{\@@flbox[#1]}{\@@@@@flbox[#1]}}
\def\@@flbox[#1][{\@ifnextchar{l}{\@@lflbox[#1][}{\@@@flbox[#1][}}
\def\@@@flbox[#1][{\@ifnextchar{c}{\@@cflbox[#1][}{\@@@@flbox[#1][}}
\def\@@@@flbox[#1][{\@ifnextchar{r}{\@@rflbox[#1][}{\relax}}
\def\@@@@@flbox[{\@ifnextchar{l}{\@@lflbox[\z@,\z@][}{\@@@@@@flbox[}}
\def\@@@@@@flbox[{\@ifnextchar{c}{\@@cflbox[\z@,\z@][}{\@@@@@@@flbox[}}
\def\@@@@@@@flbox[{\@ifnextchar{r}{\@@rflbox[\z@,\z@][}{\@@@@@@@@flbox[}}
\def\@@@@@@@@flbox[#1]{\@@rflbox[#1][r]}
\long\def\@@lflbox[#1,#2][l]#3{\@@lput[#1,#2][l]{%
  \vtop{\leftskip\z@\parindent\z@\raggedleft\hm@d@#3}}}
\long\def\@@cflbox[#1,#2][c]#3{\@@cput[#1,#2][c]{%
  \vtop{\leftskip\z@\parindent\z@\raggedcenter\hm@d@#3}}}
\long\def\@@rflbox[#1,#2][r]#3{\@@rput[#1,#2][r]{%
  \vtop{\leftskip\z@\parindent\z@\raggedright\hm@d@#3}}}
\def\epsfdirectory#1{\xdef\epsfdir@{#1}}
\def\epsfgetlitbb#1#2 #3 #4 #5]#6{\epsfgrab #2 #3 #4 #5 .\\%
   \epsfsetgraph{\epsfdir@#6}}%
\def\epsfnormal#1{\epsfgetbb{\epsfdir@#1}\epsfsetgraph{\epsfdir@#1}}%
\def\epsfsize#1#2{%
  \ifnum\epsfscale=1000
    \ifnum\epsfxsize=0
      \ifnum\epsfysize=0
      \else \computescale{\epsfysize}{#2}% returns \epsfscale
      \fi
    \else \computescale{\epsfxsize}{#1}% returns \epsfscale
    \fi
  \else
    \epsfxsize=#1% <<< aggiunto % al termine di questa linea
    \divide\epsfxsize by 1000
    \multiply\epsfxsize by \epsfscale
  \fi}
\def\showfig#1#2{\epsfbox{#2}}
\def\fig@#1#2{\leavevmode{\framebox{\figstyl@\strut{ #1 }}}}
\def\figstyle#1{\def\figstyl@{#1}}
\def\showfigurestrue{\let\fig\showfig}
\def\showfiguresfalse{\let\fig\fig@}
  \let\epsilon\varepsilon
\let\textheta\theta      \let\theta\vartheta
          \let\phi\varphi
   \let\emptyset\varemptyset
\let\Larg@\Large
\let\hug@\huge
\def\usepackage#1{\input{#1.sty}}
\def\rm{\ifmmode\else\protect\prm\fi}
\def\it{\ifmmode\else\protect\pit\fi}
\let\mathbf\bold
\def\r@adlabel#1#2{\global\@namedef{#1@\the\@key}{#2}}
\let\Large\Larg@
\let\huge\hug@
\def\smallskip{\vskip\smallskipamount}
\def\medskip{\vskip\medskipamount}
\def\bigskip{\vskip\bigskipamount}
\def\mytrivlist{\parsep\parskip\@nmbrlistfalse
  \my@trivlist \labelwidth\z@ \leftmargin\z@
  \itemindent\z@ \def\makelabel##1{##1}}
\def\my@trivlist{\global\@newlisttrue \@outerparskip\parskip}
\def\end#1{\csname end#1\endcsname\@checkend{#1}%
  \expandafter\endgroup\if@endpe\@doendpe\fi
  \if@ignore \global\@ignorefalse \ignorespaces\fi}
\def\maketitle{\par
 \begingroup
 \def\thefootnote{\fnsymbol{footnote}}
 \def\@makefnmark{\hbox 
 to 0pt{$^{\@thefnmark}$\hss}} 
 \if@twocolumn 
 \twocolumn[\@maketitle] 
 \else %%% \newpage
 \global\@topnum\z@ \@maketitle \fi\thispagestyle{plain}\@thanks
 \endgroup
 \setcounter{footnote}{0}
 \let\maketitle\relax
 \let\@maketitle\relax
 \gdef\@thanks{}\gdef\@author{}\gdef\@title{}\let\thanks\relax}
\def\@maketitle{ %%% \newpage
 \null
 \vskip 2em \begin{center}
 {\LARGE \@title \par} \vskip 1.5em {\large \lineskip .5em
\begin{tabular}[t]{c}\@author 
 \end{tabular}\par} 
 \vskip 1em {\large \@date} \end{center}
 \par
 \vskip 1.5em}
\def\partbeforeskip#1{\def\p@rtbeforeskip{#1}}
\def\partstyle#1{\def\p@rtstyl@{#1}}
\def\partdot#1{\def\partd@t{#1}}
\def\partafterskip#1{\def\p@rtafterskip{#1}}
\def\partintrostyle#1{\def\partintr@styl@{#1}}
\def\partintrodot#1{\def\partintr@dot{#1}}
\long\def\partintrosep#1{\long\def\partintr@sep{#1}}
\def\partnewpagetrue{\def\p@rtnewp@ge{\newpage}}
\def\partnewpagefalse{\long\def\p@rtnewp@ge{\par}}
\def\partname{Part}
\def\part{\p@rtnewp@ge\addvspace\p@rtbeforeskip\@afterindentfalse\secdef\@part\@spart}
\def\@part[#1]#2{\ifnum \c@secnumdepth >-1\relax  % IF secnumdepth > -1
        \refstepcounter{part}                     %   THEN step part counter
        \def\@tempa{\addcontentsline{toc}{part}}  %
        \expandafter\@tempa\expandafter{\thepart  %        add toc line
          \hspace{1em}#1}\else                    %   ELSE add unnumbered line
        \addcontentsline{toc}{part}{#1}\fi        % FI
   {\p@rtstyl@                       
    \ifnum \c@secnumdepth >-1\relax        % IF secnumdepth > -1
      {\partintr@styl@\partname\ \thepart  %   THEN Print 'Part' and number
       \partintr@dot}\partintr@sep\nobreak %         in \Large bold.
    \fi                                    % FI
    #2\partd@t\markboth{}{}\par}
    \nobreak                       % TeX penalty to prevent page break.
    \vskip\p@rtafterskip           % Space between title and text.
   \@afterheading                  % Routine called after part and
    }                              %     section heading.
\def\@spart#1{{\p@rtcentering\p@rtstyl@                      
    #1\partd@t\par}                 % Title.
    \nobreak                        % TeX penalty to prevent page break.
    \vskip\p@rtafterskip            % Space between title and text.
    \@afterheading                  % Routine called after part and
  }                                 %     section heading.
\newif\ifsection@ftind
\newif\ifsection@ftpar
\def\sectionbeforeskip#1{\def\s@ctbeforeskip{#1}}
\def\sectionstyle#1{\def\s@ctstyl@{#1}}
\def\sectiondot#1{\def\sectiond@t{#1}}
\def\sectionafterskip#1{\def\s@ctafterskip{#1}}
\def\sectionintrostyle#1{\def\sectionintr@styl@{#1}}
\def\sectionintro#1{\def\sectionintr@{#1}}
\def\sectionintrodot#1{\def\sectionintr@dot{#1}}
\def\sectionintrosep#1{\def\sectionintr@sep{#1}}
\def\sectionindenttrue{\def\s@ctind{\parindent}}
\def\sectionindentfalse{\def\s@ctind{\z@}}
\def\sectionafterindenttrue{\section@ftindtrue}
\def\sectionafterindentfalse{\section@ftindfalse}
\def\sectionafternewlinetrue{\section@ftpartrue}
\def\sectionafternewlinefalse{\section@ftparfalse}
\newif\ifsubsection@ftind
\newif\ifsubsection@ftpar
\def\subsectionbeforeskip#1{\def\ss@ctbeforeskip{#1}}
\def\subsectionstyle#1{\def\ss@ctstyl@{#1}}
\def\subsectiondot#1{\def\subsectiond@t{#1}}
\def\subsectionafterskip#1{\def\ss@ctafterskip{#1}}
\def\subsectionintrostyle#1{\def\subsectionintr@styl@{#1}}
\def\subsectionintro#1{\def\subsectionintr@{#1}}
\def\subsectionintrodot#1{\def\subsectionintr@dot{#1}}
\def\subsectionintrosep#1{\def\subsectionintr@sep{#1}}
\def\subsectionindenttrue{\def\ss@ctind{\parindent}}
\def\subsectionindentfalse{\def\ss@ctind{\z@}}
\def\subsectionafterindenttrue{\subsection@ftindtrue}
\def\subsectionafterindentfalse{\subsection@ftindfalse}
\def\subsectionafternewlinetrue{\subsection@ftpartrue}
\def\subsectionafternewlinefalse{\subsection@ftparfalse}
\newif\ifsubsubsection@ftind
\newif\ifsubsubsection@ftpar
\def\subsubsectionbeforeskip#1{\def\sss@ctbeforeskip{#1}}
\def\subsubsectionstyle#1{\def\sss@ctstyl@{#1}}
\def\subsubsectiondot#1{\def\subsubsectiond@t{#1}}
\def\subsubsectionafterskip#1{\def\sss@ctafterskip{#1}}
\def\subsubsectionintrostyle#1{\def\subsubsectionintr@styl@{#1}}
\def\subsubsectionintro#1{\def\subsubsectionintr@{#1}}
\def\subsubsectionintrodot#1{\def\subsubsectionintr@dot{#1}}
\def\subsubsectionintrosep#1{\def\subsubsectionintr@sep{#1}}
\def\subsubsectionindenttrue{\def\sss@ctind{\parindent}}
\def\subsubsectionindentfalse{\def\sss@ctind{\z@}}
\def\subsubsectionafterindenttrue{\subsubsection@ftindtrue}
\def\subsubsectionafterindentfalse{\subsubsection@ftindfalse}
\def\subsubsectionafternewlinetrue{\subsubsection@ftpartrue}
\def\subsubsectionafternewlinefalse{\subsubsection@ftparfalse}
\newif\ifparagraph@ftind
\newif\ifparagraph@ftpar
\def\paragraphbeforeskip#1{\def\p@rbeforeskip{#1}}
\def\paragraphstyle#1{\def\p@rstyl@{#1}}
\def\paragraphdot#1{\def\paragraphd@t{#1}}
\def\paragraphafterskip#1{\def\p@rafterskip{#1}}
\def\paragraphintrostyle#1{\def\paragraphintr@styl@{#1}}
\def\paragraphintro#1{\def\paragraphintr@{#1}}
\def\paragraphintrodot#1{\def\paragraphintr@dot{#1}}
\def\paragraphintrosep#1{\def\paragraphintr@sep{#1}}
\def\paragraphindenttrue{\def\p@rind{\parindent}}
\def\paragraphindentfalse{\def\p@rind{\z@}}
\def\paragraphafterindenttrue{\paragraph@ftindtrue}
\def\paragraphafterindentfalse{\paragraph@ftindfalse}
\def\paragraphafternewlinetrue{\paragraph@ftpartrue}
\def\paragraphafternewlinefalse{\paragraph@ftparfalse}
\newif\ifsubparagraph@ftind
\newif\ifsubparagraph@ftpar
\def\subparagraphbeforeskip#1{\def\sp@rbeforeskip{#1}}
\def\subparagraphstyle#1{\def\sp@rstyl@{#1}}
\def\subparagraphdot#1{\def\subparagraphd@t{#1}}
\def\subparagraphafterskip#1{\def\sp@rafterskip{#1}}
\def\subparagraphintrostyle#1{\def\subparagraphintr@styl@{#1}}
\def\subparagraphintro#1{\def\subparagraphintr@{#1}}
\def\subparagraphintrodot#1{\def\subparagraphintr@dot{#1}}
\def\subparagraphintrosep#1{\def\subparagraphintr@sep{#1}}
\def\subparagraphindenttrue{\def\sp@rind{\parindent}}
\def\subparagraphindentfalse{\def\sp@rind{\z@}}
\def\subparagraphafterindenttrue{\subparagraph@ftindtrue}
\def\subparagraphafterindentfalse{\subparagraph@ftindfalse}
\def\subparagraphafternewlinetrue{\subparagraph@ftpartrue}
\def\subparagraphafternewlinefalse{\subparagraph@ftparfalse}
\let\@partoken\par
\long\def\@@gobble#1{}
\def\ignorepar{\@ifnextchar\@partoken{\expandafter\ignorepar\@@gobble}{\ignorespaces}}
\def\@startsection#1#2#3#4#5#6{
   \@tempskipa #4\relax
   \csname if#1@ftind\endcsname\@afterindenttrue\else\@afterindentfalse\fi
   \advance\@tempskipa by\presection
   \if@nobreak \everypar{}\else
     \addpenalty{\@secpenalty}\addvspace{\@tempskipa}%
     \allowbreak\vskip -\presection \fi \@ifstar
     {\@ssect{#1}{#2}{#3}{#4}{#5}{#6}}{\@dblarg{\@sect{#1}{#2}{#3}{#4}{#5}{#6}}}}
\def\@sect#1#2#3#4#5#6[#7]#8{\def\object@type{#1}%
   \ifnum #2>\c@secnumdepth\def\@svsec{}\def\@tempb{}%
      \else\refstepcounter{#1}\def\@svsec{{\csname #1intr@styl@\endcsname%
        {\csname #1intr@\endcsname}\csname the#1\endcsname%
        \csname #1intr@dot\endcsname\kern\csname #1intr@sep\endcsname}}%
        \edef\@tempb{\noexpand\numberline{\csname the#1\endcsname}}\fi%
   \def\@tempa{\addcontentsline{toc}{#1}}%
   \csname if#1@ftpar\endcsname%
      \begingroup #6\relax%
        \@hangfrom{\hskip #3\relax\@svsec}{\interlinepenalty \@M{#8}%
        \csname #1d@t\endcsname\par}%
      \endgroup%
      \csname #1mark\endcsname{#7}%
      \expandafter\@tempa\expandafter{\@tempb #7}%
      \ifautolabel\label*{#8}\fi%
   \else%
      \def\@svsechd{#6\hskip #3\relax%
         \@svsec{#8}%
         \csname #1d@t\endcsname%
         \csname #1mark\endcsname{#7}%
         \expandafter\@tempa\expandafter{\@tempb #7}%
         \ifautolabel\label*{#8}\fi}\fi%
   \@xsect{#1}{#5}\ignorepar}
\def\@ssect#1#2#3#4#5#6#7{%
   \ifnum #2>\c@secnumdepth\def\@tempb{}\else \def\@tempb{\numberline{}}\fi%
     \def\@tempa{\addcontentsline{toc}{s#1}}%
     \csname if#1@ftpar\endcsname
        \begingroup #6\relax
           \@hangfrom{\hskip #3}{\interlinepenalty \@M{#7}%
           \csname #1d@t\endcsname\par}%
        \endgroup
        \csname s#1mark\endcsname{#7}%
        \ifstarredcontents\expandafter\@tempa\expandafter{\@tempb #7}\fi%
        \ifautolabel\label*{#7}\fi%
     \else%
        \def\@svsechd{#6\hskip #3\relax{#7}%
        \csname #1d@t\endcsname%
        \csname s#1mark\endcsname{#7}%
        \ifautolabel\label*{#7}\fi}\fi
   \@xsect{#1}{#5}\ignorepar}
\def\@xsect#1#2{
   \csname if#1@ftpar\endcsname 
       \par \nobreak \vskip #2\relax \@afterheading
    \else \global\@nobreakfalse \global\@noskipsectrue
       \everypar{\if@noskipsec \global\@noskipsecfalse
                   \clubpenalty\@M \hskip -\parindent
                   \begingroup \@svsechd \endgroup \unskip
                   \hskip #2\relax  % relax added 14 Jan 91
                  \else \clubpenalty \@clubpenalty
                    \everypar{}\fi}\fi\ignorespaces}
\def\section{\@startsection{section}{1}{\s@ctind}
  {\s@ctbeforeskip}{\s@ctafterskip}{\s@ctstyl@}}
\def\subsection{\@startsection{subsection}{2}{\ss@ctind}
  {\ss@ctbeforeskip}{\ss@ctafterskip}{\ss@ctstyl@}}
\def\subsubsection{\@startsection{subsubsection}{3}{\sss@ctind}
  {\sss@ctbeforeskip}{\sss@ctafterskip}{\sss@ctstyl@}}
\def\paragraph{\@startsection{paragraph}{4}{\p@rind}
  {\p@rbeforeskip}{\p@rafterskip}{\p@rstyl@}}
\def\subparagraph{\@startsection{subparagraph}{4}{\sp@rind}
  {\sp@rbeforeskip}{\sp@rafterskip}{\sp@rstyl@}}
\def\statementabove#1{\def\th@bove{#1}}
\def\statementstyle#1{\def\thstyl@{#1}}
\def\statementbelow#1{\def\thb@low{#1}}
\def\statementindentfalse{\let\thind@nt\relax}
\def\statementindenttrue{\let\thind@nt\indent}
\def\statementintrostyle#1{\def\thintr@style{#1}}
\def\statementintrodot#1{\def\thintr@dot{#1}}
\def\statementintrosep#1{\def\thintr@sep{#1}}
\def\statementintrobrackets#1#2{\def\thintr@left{#1}\def\thintr@right{#2}}
\def\@thskip{\dimen100\lastskip\vskip-\dimen100%
  \th@bove\dimen101\lastskip\vskip-\dimen101%
  \ifdim\dimen100>\dimen101\else\dimen100\dimen101\fi\vskip\dimen100\vskip0pt}
\long\def\@@newtheorem#1#2#3{%
  \newenvironment{#3}%
    {\def\object@type{#3}\par\@thskip%
     \@ifnextchar[{\@enva{#3}{\thstyl@#1{#2}}}{\@envb{#3}{\thstyl@#1{#2}}}}%
    {\end{#3@}}%
  \@ifnextchar[{\@nothm{#3}}{\@nnthm{#3}}}
\def\@nothm#1[#2]#3{%
  \@ifundefined{c@#2}{\@latexerr{No theorem environment `#2' defined}\@eha}%
  {\expandafter\@ifdefinable\csname #1@\endcsname
  {\global\@namedef{the#1}{\@nameuse{the#2}}%
   \global\@namedef{c@#1}{\@nameuse{c@#2}}
   \global\@namedef{p@#1}{\@nameuse{p@#2}}
   \global\@namedef{#1@}{\@nnnthm{#2}{#3}}%
   \global\@namedef{end#1@}{\@endtheorem}}}}
\def\@nnnthm#1#2{\refstepcounter
    {#1}\@ifnextchar[{\@ynnnthm{#1}{#2}}{\@xnnnthm{#1}{#2}}}
\def\@xnnnthm#1#2{\@begintheorem{#2}{\csname the#1\endcsname}\ignorespaces}
\def\@ynnnthm#1#2[#3]{\@opargbegintheorem{#2}{\csname the#1\endcsname}{#3}\ignorespaces}
\def\renewtheorem{\@ifnextchar[{\@renewtheorem}{\@renewtheorem[{}{}]}}%]
\long\def\@renewtheorem[#1]{\@@renewtheorem#1}
\long\def\@@renewtheorem#1#2#3{%
  \expandafter\let\csname#3@\endcsname\undefined
  \renewenvironment{#3}%
    {\def\object@type{#3}\par\@thskip%
     \@ifnextchar[{\@enva{#3}{\thstyl@#1{#2}}}{\@envb{#3}{\thstyl@#1{#2}}}}%
    {\end{#3@}}%
  \@ifnextchar[{\@nothm{#3}}{\@nnthm{#3}}}
\def\@begintheorem#1#2{\@opargbegintheorem{#1}{#2}{}}
\def\@opargbegintheorem#1#2#3{%
        \edef\@tempx{#1}%
        \expandafter\let\expandafter\@tempy#2%  This gives \thetheorem
        \def\@tempz{#3}%
        \mytrivlist\item[\thind@nt\hskip\labelsep%
        {\thintr@style%
          #1\ifx\@tempx\@empty\else\ifx\@tempy\relax\else\kern1ex\fi\fi#2%
          \ifx\@tempz\@empty%
            \ifx\@tempx\@empty\ifx\@tempy\relax%
            \else\thintr@dot\thintr@sep\fi\else\thintr@dot\thintr@sep\fi%
            \else%
            \ifx\@tempx\@empty\ifx\@tempy\relax\else\kern1ex\fi\else\kern1ex\fi%
           \thintr@left{#3}\thintr@right\thintr@dot\thintr@sep\fi}%
            \hskip-\labelsep]%
        \ifautolabel\label*{#3}\fi}
\def\@endtheorem{\strut\endtrivlist\thb@low}
\def\proofname{Proof}
\def\proofabove#1{\def\pf@bove{#1}}
\def\proofstyle#1{\def\pfstyl@{#1}}
\def\proofbelow#1{\def\pfb@low{#1}}
\def\proofindentfalse{\let\pfind@nt\relax}
\def\proofindenttrue{\let\pfind@nt\indent}
\def\proofintrostyle#1{\def\pfintr@style{#1}}
\def\proofintrodot#1{\def\pfintr@dot{#1}}
\def\proofintrosep#1{\def\pfintr@sep{#1}}
\def\proofintrobrackets#1#2{\def\pfintr@left{#1}\def\pfintr@right{#2}}
\def\@pfskip{\dimen100\lastskip\vskip-\dimen100%
  \pf@bove\dimen101\lastskip\vskip-\dimen101%
  \ifdim\dimen100>\dimen101\else\dimen100\dimen101\fi\vskip\dimen100\vskip0pt}
\renewenvironment{proof}%
  {\@pfskip\mytrivlist\item[\pfind@nt]\@ifnextchar[{\pro@f}{\pro@f[\prooftag]}}
  {\ifvoid\provedbox\else\hproved\fi\endtrivlist\pfb@low}
\def\pro@f[#1]{\setbox\provedbox\hbox{\provedboxcontents{#1}}\proofintro{#1}}
\def\proofintro#1{\expandafter\def\expandafter\@tempa\expandafter{#1}%
  {\pfintr@style{\proofname\ifx\@tempa\empty\else\kern1ex\pfintr@left{#1}%
  \pfintr@right\fi}\pfintr@dot\pfintr@sep}\pfstyl@\ignorespaces}
\def\provedmark#1{\def\prm@rk{#1}}
\def\provedsep#1{\def\prs@p{#1}}
\def\provedtexttrue{\def\prb@x##1{\fbox{\small##1}}}
\def\provedtextfalse{\def\prb@x##1{\prm@rk}}
\def\provedmarkrighttrue{\let\prhf@l\hfill}
\def\provedmarkrightfalse{\let\prhf@l\relax}
\def\provedboxcontents#1{\expandafter\def\expandafter\@tempa\expandafter{#1}%
  \ifx\@tempa\empty\prm@rk\else\prb@x{#1}\fi}
\def\proved{\ifmmode\eqno{\box\provedbox}\else\hproved\fi}
\def\hproved{\unskip\nobreak\prhf@l\penalty50\prs@p\hbox{}\nobreak\prhf@l
  \box\provedbox{\finalhyphendemerits=0\par}}
\def\captionstyle#1{\def\c@ptstyl@{#1}}
\def\captionintrostyle#1{\def\c@pintr@style{#1}}
\def\captionintrodot#1{\def\c@pintr@dot{#1}}
\def\captionintrosep#1{\def\c@pintr@sep{#1}}
\long\def\@makecaption#1#2{%
    \vskip\captionskip
    \setbox\@tempboxa\hbox{%
      \ifproofing\@ifundefined{the@label}{}
        {\hbox to 0pt{\vbox to 0pt{\vss\hbox{\tiny\the@label}\bigskip}\hss}}\fi
      \c@ptstyl@{\c@pintr@style #1\c@pintr@dot}\ignorespaces #2}%
    \@captionwidth=\hsize \advance\@captionwidth-2\@captionmargin
    \ifdim \wd\@tempboxa >\@captionwidth {%
        \rightskip=\@captionmargin\leftskip=\@captionmargin
        \unhbox\@tempboxa\par}%
      \else
        \hbox to\hsize{\hfil\box\@tempboxa\hfil}%
    \fi}
\def\end@Float#1{%
  \expandafter\caption\expandafter[\the@title]{%
   {\c@pintr@style%
   \ifx\the@caption\empty\ifx\the@title\empty
   \else\c@pintr@sep\fi\else\c@pintr@sep\fi
    \the@title\ifx\the@caption\empty%
     \expandafter\label\expandafter*\expandafter{\the@label}%
    \else\ifx\the@title\empty%
     \expandafter\label\expandafter*\expandafter{\the@label}%
    \else\c@pintr@dot\c@pintr@sep%
     \expandafter\label\expandafter*\expandafter{\the@label}\fi\fi}%
   \ignorespaces\the@caption}%
  \end{#1}}
\renewenvironment{Figure}{\@ifnextchar[%
  {\@myFloat{figure}}{\@myFloat{figure}[htbp]}}{\end@Float{figure}}
\def\@myFloat#1[#2]#3{%
  \begin{#1}[#2]\def\the@label{#3}}
\def\showfig{\showfigurestrue\fig}
\def\fig#1{\@ifnextchar[{\@fig{#1}}{\@fig{#1}[0pt]}}
\def\@fig#1[#2]#3{\@ifnextchar[{\@@fig{#1}[#2]{#3}}{\@@fig{#1}[#2]{#3}[0pt]}}
\def\@@fig#1[#2]#3[#4]#5#6{%
  \def\the@title{#5}\def\the@caption{#6}\centerline{\fig@{#1}{#2}{#3}}\vskip#4}
\def\fig@@#1#2#3{\leavevmode{\figstyl@\vrule width 0pt height 1.8ex%
 \smash{\framebox{\strut\def\@temp{#1}\ifx\@temp\@empty{ #3 }\else{ #1 }\fi}}}}
\def\fig@@@#1#2#3{\leavevmode\kern#2\epsfbox{#3}}
\def\figstyle#1{\def\figstyl@{#1}}
\def\oldFigure{%
  \renewenvironment{Figure}{\@ifnextchar[%
    {\@Float{figure}}{\@Float{figure}[htbp]}}{\end@Float{figure}}
    \let\showfig\@ldshowfig \let\fig\@ldfig
    \let\showfigurestrue\@ldshowfigurestrue
    \let\showfiguresfalse\@ldshowfiguresfalse
    \showfiguresfalse}
\def\@ldshowfig#1#2{\epsfbox{#2}}
\def\@ldfig@#1#2{\leavevmode{\framebox{\figstyl@\strut{ #1 }}}}
\def\@ldshowfigurestrue{\let\fig\@ldshowfig}
\def\@ldshowfiguresfalse{\let\fig\@ldfig@}
\newcounter{diagram}
\let\thediagram\theequation
\def\ftype@diagram{2}
\def\ext@diagram{lod}
\def\diagram{\@float{diagram}}
\let\enddiagram\end@float
\newif\if@diagnum
\def\diag#1{\@ifnextchar[{\@diag{#1}}{\@diag{#1}[0pt]}}
\def\@diag#1[#2]#3{\@ifnextchar[{\@@diag{#1}[#2]{#3}}{\@@diag{#1}[#2]{#3}[0pt]}}
\def\@@diag#1[#2]#3[#4]#5{
  \def\the@tag{#5}\@eqnswtrue%
  \centerline{\setbox0\hbox{\diag@{#1}{#2}{#3}}
  \dimen0 -0.5\wd0\dimen1 0.5\ht0\box0%
  \advance\dimen0 0.5\hsize\advance\dimen0 -\rightskip\advance\dimen1 #4%
  \let\@currentlabel\the@tag%
  \setbox0\hbox to 0pt{\hss\family{cmr}\shape{n}\series{m}\selectfont(\the@tag)}%
  \ifx\the@tag\@empty\refstepcounter{equation}\let\@currentlabel\theequation%
    \setbox0\hbox to 0pt{\hss\family{cmr}\shape{n}\series{m}\selectfont(\thediagram)}\fi%
  \if@eqnsw\else\let\@currentlabel\relax\setbox0\hbox to 0pt{}\fi%
  \advance\dimen1 -0.5\ht0%
  \put[\dimen0,\dimen1][l]{%
    \box0\expandafter\label\expandafter*\expandafter{\the@label}\kern0.15em}}}
\def\diag@@#1#2#3{\leavevmode{\diagstyl@\vrule width 0pt height 1.8ex%
 \smash{\framebox{\strut\def\@temp{#1}\ifx\@temp\@empty{ #3 }\else{ #1 }\fi}}}}
\def\diag@@@#1#2#3{\leavevmode\kern#2\epsfbox{#3}}
\def\diagstyle#1{\def\diagstyl@{#1}}
\def\showfiguresfalse{\let\fig@\fig@@}
\def\showfigurestrue{\let\fig@\fig@@@}
\def\showdiagramsfalse{\let\diag@\diag@@}
\def\showdiagramstrue{\let\diag@\diag@@@}
\def\n@number{\@eqnswfalse\let\@currentlabel\relax\let\the@tag\relax}
\def\equation{$$%$$
  \@eqnswtrue\def\object@type{equation}\let\nonumber\n@number%
  \advance\c@equation1\edef\@currentlabel{\theequation}\advance\c@equation-1%
  \def\the@tag{\refstepcounter{equation}\eqno\hbox{\@eqnnum}}}
\def\tag#1{\edef\@currentlabel{#1}\def\the@tag{\eqno\hbox{\reset@font\rm(#1)}}}
\def\endequation{\the@tag$$%$$
  \global\@ignoretrue}
\let\it@m\item
\def\item{\@ifnextchar[{\item@}{\item@@}}
\def\item@[#1]{\it@m[#1]\vskip-\lastskip\vskip\itemsep}
\def\item@@{\it@m\vskip-\lastskip\vskip\itemsep}
\def\s@titemsep{\@ifnextchar[{\s@@titemsep}{\relax}}
\def\s@@titemsep[#1]{\itemsep#1}
\let\@itemize\itemize
\let\@enditemize\enditemize
\renewenvironment{itemize}
{\@itemize\itemsep3pt\parsep0pt\topsep0pt\partopsep0pt\s@titemsep}
{\@enditemize\vskip-\lastskip\vskip\itemsep}
\let\@enumerate\enumerate
\let\@endenumerate\endenumerate
\let\@description\description
\let\@enddescription\enddescription
\def\thebibliography#1{%
 \section*{\refname}\vskip-\lastskip%
 \list{[\arabic{bibenumi}]}{\topsep0pt\settowidth\labelwidth{[#1]}%
 \leftmargin\labelwidth\advance\leftmargin\labelsep\usecounter{bibenumi}}%
 \def\newblock{\hskip .11em plus .33em minus .07em}%
 \sloppy\clubpenalty4000\widowpenalty4000\sfcode`\.=1000\relax}
\newtheorem{stat}{\statname}  \unnumbered{stat}
\newtheorem{nstat}{\nstatname}[section]
\newtheorem{definition}[nstat]{Definition}
\newtheorem{lemma}[nstat]{Lemma}
\newtheorem{proposition}[nstat]{Proposition}
\newtheorem{theorem}[nstat]{Theorem}
\newtheorem{remark}[nstat]{Remark}
\let\ns\normalshape
\let\mycal\cal
\def\cal#1{{\mycal #1}}
\let\mymathrm\mathrm
\def\mathrm#1{{\mymathrm #1}}
\let\texbf\bf
\def\bf{\texbf\boldmath}
\def\leftwavearrow{\mathrel{\raise0.8pt\hbox{\epsfbox{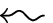}}}}
\def\rightwavearrow{\mathrel{\raise0.8pt\hbox{\epsfbox{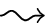}}}}
\def\leftrightwavearrow{\mathrel{\raise0.8pt\hbox{\epsfbox{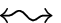}}}}
\def\longleftwavearrow{\mathrel{\raise0.8pt\hbox{\epsfbox{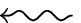}}}}
\def\longrightwavearrow{\mathrel{\raise0.8pt\hbox{\epsfbox{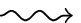}}}}
\def\longleftrightwavearrow{\mathrel{\raise0.8pt\hbox{\epsfbox{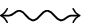}}}}
\let\rightmove\rightwavearrow
\def\varemptyset{{\text{\raise.21ex\hbox{$\not$}}\mkern.15mu\mathrm{O}\mkern.15mu}}
\let\emptyset\varemptyset
\let\theta\textheta
\let\bar\widebar
\let\hat\widehat
\let\tilde\widetilde
\newcommand{\A}{\cal A}
\newcommand{\E}{\cal E}
\newcommand{\G}{\cal G}
\newcommand{\M}{\cal M}
\newcommand{\R}{\cal R}
\renewcommand{\S}{\cal S}
\newcommand{\T}{\cal T}
\newcommand{\Cl}{\mathop{\mathrm{Cl}}\nolimits}
\newcommand{\Int}{\mathop{\mathrm{Int}}\nolimits}
\newcommand{\Stab}{\mathop{\mathrm{Stab}}\nolimits}
\newcommand{\Orb}{\mathop{\mathrm{Orb}}\nolimits}
\newcommand{\len}{\mathop{\mathrm{len}}\nolimits}
\newcommand{\ord}{\mathop{\mathrm{ord}}\nolimits}
\newcommand{\LCM}{\mathop{\mathrm{LCM}}\nolimits}
\begin{document}

\title{\large\bf
AUTOMORPHISMS OF TRIVALENT GRAPHS
\label{Version 1.0 / \today}}
\author{\normalsize\sc Silvia Benvenuti\\
\normalsize\sl Scuola di Scienze e Tecnologie\\
\normalsize\sl Universit\`a di Camerino -- Italia\\
\small\tt silvia.benvenuti@unicam.it 
\and
\normalsize\sc Riccardo Piergallini\\
\normalsize\sl Scuola di Scienze e Tecnologie\\
\normalsize\sl Universit\`a di Camerino -- Italia\\
\small\tt riccardo.piergallini@unicam.it}

\date{}

\vglue1.2cm

\maketitle

\bigskip

\begin{abstract}
\baselineskip13.5pt
\medskip

\noindent
Let $\G_{g,b}$ be the set of all uni/trivalent graphs representing the combinatorial
structures of pant decompositions of the oriented surface $\Sigma_{g,b}$ of genus $g$ with 
$b$ boundary components. We describe the set $\A_{g,b}$ of all automorphisms of graphs in
$\G_{g,b}$ showing that, up to suitable moves changing the graph within $\G_{g,b}$, any
such automorphism can be reduced to elementary switches of adjacent edges.

\medskip\smallskip\noindent
{\sl Keywords}\/: uni/trivalent graph, $F$-move, pant decomposition complex.

\medskip\noindent
{\sl AMS Classification}\/: Primary 20B25; Secondary 57M50, 05C25.

\end{abstract}

\maketitle

\bigskip
%\newpage

\section*{Introduction}

Let $\G_{g,b}$ be the set of all connected non-oriented uni/trivalent graphs $\Gamma$,
with first Betti number $\beta_1(\Gamma) = g \geq 0$, at least one trivalent vertex and $b
\geq 0$ univalent vertices. It is immediate to see that such a graph $\Gamma$ exists if
and only if $(g,b) \neq (0,0),(0,1),(0,2),(1,0)$, and in this case $\Gamma$ has $2g-2+b$
trivalent vertices and $3g-3+2b$ edges. In the following, the pair $(g,b)$ will be always
assumed to satisfy that restriction.

Given any $\Gamma \in \G_{g,b}$, we denote by $\A(\Gamma)$ the group of all the
automorphisms of $\Gamma$ as a non-oriented graph. In this paper we are interested in the
structure of the set $\A_{g,b} = \cup_{\Gamma \in \G_{g,b}}\A(\Gamma)$ of all the
automorphisms of graphs in $\G_{g,b}$. More precisely, we will prove that, up to certain
$F$-moves changing the graph within $\G_{g,b}$, any such automorphism can be reduced to
elementary switches of adjacent edges.

The drive for this paper comes from a problem we met while working on pant decompositions
of surfaces in \cite{BP08}. Namely, in that paper we build up an infinite simply connected
2-dimensional complex $\R_{g,b}$, codifying all the pant decompositions on the connected
compact oriented surface $\Sigma_{g,b}$ of genus $g$ with $b$ boundary components, as well
as all the moves relating them and all the relations between those moves. The construction
is subdivided into two independent steps.

First a finite simply connected 2-dimensional complex $\S_{g,b}$ is described, which
codifies the {\sl combinatorial structures} of pant decompositions of $\Sigma_{g,b}$. The
combinatorial structure of a pant decomposition $D$ consists of the information concerning
only the incidence relations between pants, and it is encoded by the dual graph $\Gamma_D
\in \G_{g,b}$, with the trivalent vertices corresponding to the pants, the univalent
vertices corresponding to the boundary components, and the edges corresponding to the
cutting curves. Afterwards, by using a presentation of the mapping class group $\M_{g,b}$
of $\Sigma_{g,b}$, we get the desired complex $\R_{g,b}$.
 
Due to technical issues, the switch from $\S_{g,b}$ to $\R_{g,b}$ is not a ``direct''
one, but it requires instead a cumbersome intermediate step. This involves in the
construction of two additional complexes $\tilde{\S}_{g,b}$ and $\tilde{\R}_{g,b}$,
codifying respectively the {\sl decorated combinatorial structures} and the {\sl decorated
pant decompositions}. Here, a decoration of a pant decomposition $D$ is a numbering of its
cutting curves, and a decoration of the corresponding combinatorial structure is a
numbering of the edges of $\Gamma_D$.

This method allows us to sidestep a crucial problem in the direct transition from
$\S_{g,b}$ to $\R_{g,b}$, that is the study of the stabilization subgroup $\Stab(D) \in
\M_{g,b}$ of any given pant decomposition $D$. In fact, the proof of the simple
connectedness of $\R_{g,b}$ given in \cite{BP08} would become less elaborate based on the
fact that the fiber $p^{-1}(\Gamma_D)$ of the natural projection $p: \R_{g,b} \to
\S_{g,b}$ is trivial at $\R_{g,b}$ on the level of the fundamental group. In other words,
we need to show that any loop based at $D$ and contained in $p^{-1}(\Gamma_D)$ is
contractible in $\R_{g,b}$. Now, such a loop corresponds to an element of $\Stab(D)$, that
is a symmetry of $D$, which induces a (possibly trivial) combinatorial symmetry of
$\Gamma_D$. Actually, once the combinatorial symmetries of $\Gamma_D$ are known, one can
reconstruct those of $D$ in a straightforward way, by adding some easy topological
information.

At this point, it should be clear the motivation for the study carried out in the present
paper of the structure of $\A_{g,b}$, and in particular of the inclusion $\A(\Gamma)
\subset \A_{g,b}$ of the group of combinatorial symmetries $\A(\Gamma)$ of a given $\Gamma
\in \G_{g,b}$. In fact, the result we obtain here will enable us to give a much simpler
contruction of $\R_{g,b}$ than in \cite{BP08}, and a more direct proof of its simple
connectedness starting from $\S_{g,b}$ without involving decorations.

However, the study of the automorphisms of uni/trivalent graphs has its own interest
independently from that specific application (see the classical papers \cite{Tu48,Tu59}
and \cite{Go80}, and some more recent ones, as for instance \cite{CD02,Ca04,FK06,FK07}).
Hence, in this paper we focus on the structure of $\A_{g,b}$, while the new construction
of $\R_{g,b}$ mentioned above will be described in details in a forthcoming paper
\cite{BP12}.

The paper is structured as follows. In Section~\ref{main/sec} we give the basic
definitions, the statement of the main theorem, and the reduction of its proof to special
cases. Then, after having established some preliminary results in
Section~\ref{lemmas/sec}, we devote the subsequent Sections~\ref{order-pm/sec}, 
\ref{order-3m/sec}, \ref{order-2m/sec} to those special cases.

%\newpage

\section{Definitions and main theorem%
\label{main/sec}}

For a graph $\Gamma \in \G_{g,b}$, we call a {\sl free end} of $\Gamma$ any univalent
vertex, a {\sl terminal edge} of $\Gamma$ any of the $b$ edges connecting a free end to a
trivalent vertex, and an {\sl internal edge} of $\Gamma$ any of the $3g - 3 + b$ edges
connecting two (possibly coinciding) trivalent vertices.

First of all, we introduce $F$-moves on uni/trivalent graphs. These are well-known moves
that change the graph structure by acting on the internal edges. It is a folklore result
that $F$-moves, even in the most restrictive form given in
Definition~\ref{Fmove-edge/def}, suffice to relate any two graphs in $\G_{g,b}$ up to
graph isomorphisms (cf. \cite{HT80}). However, we need to consider an invariant version of
$F$-moves, in order to relate graph automorphisms. This makes things more involved.

\begin{definition}\label{Fmove-edge/def}
Let $\Gamma \in \Gamma_{g,b}$ and $e \in \Gamma$ be an internal edge with distinct ends.
Then, we call {\sl elementary (edge) $F$-move}, the modification $F_{e,e'}: \Gamma
\rightmove \Gamma'$ that makes $\Gamma$ into $\Gamma' \in \G_{g,b}$, by replacing $e$ with
a new internal edge $e'$ as in Figure~\ref{move-edge/fig}, while leaving the rest of the
graph unchanged. Clearly, the inverse modification $F_{e',e}: \Gamma' \rightmove \Gamma$
is an elementary (edge) $F$-move as well.
\end{definition}

\begin{Figure}[htb]{move-edge/fig}
\fig{}{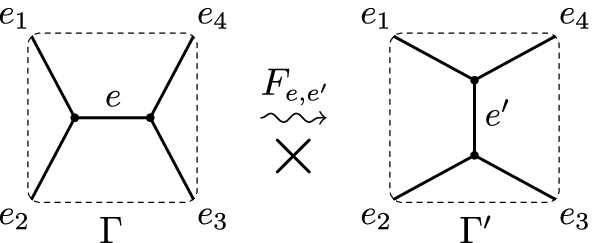}[-3pt]
{}{An edge $F$-move $F_{e,e'}$.}
\end{Figure}

According to the literature, here the letter $F$ stands for ``fusion'' and refers to the
fact that the $F$-move $F_{e,e'}$ can be thought as the contraction of the edge $e$ in
$\Gamma$, and the consequent fusion of its ends, followed by the inverse of a similar
contraction of the edge $e'$ in $\Gamma'$. The intermediate graph has a quadrivalent 
vertex in place of the two trivalent ends of the contracted edges, as indicated under
the arrow.

We warn the reader that, since the graphs in $\G_{g,b}$ are abstract ones, properties 
and constraints depending on the planar representation (of portions) of them in the 
figures do not have any significance.

In particular, the coupling $(e_1 e_2)(e_3 e_4)$ of the (possibly not all distinct) edges
$e_1, \dots, e_4$ determined by the two ends of $e$ in $\Gamma$, which in
Figure~\ref{move-edge/fig} is replaced by the coupling $(e_1 e_4)(e_2 e_3)$ determined by
the two ends of $e'$ in $\Gamma'$, could be replaced by the coupling $(e_1 e_3)(e_2 e_4)$
as well. In other words, there are exactly two possible ways to perform the $F$-move
$F_{e,e'}$ at the given edge $e$ of $\Gamma$, corresponding to the two possible changes of
coupling, depending on the structure of $\Gamma'$ at the edge $e'$:
$$(e_1 e_2)(e_3 e_4) \mathrel{\buildrel \textstyle F_{e,e'} \over \longrightwavearrow} 
(e_1 e_4)(e_2 e_3)
\quad \text{and} \quad
(e_1 e_2)(e_3 e_4) \mathrel{\buildrel \textstyle F_{e,e'} \over \longrightwavearrow} 
(e_1 e_3)(e_2 e_4).$$

From a different perspective, the elementary $F$-move described in
Figure~\ref{move-edge/fig}, can be interpreted as the replacement of a uni/trivalent
subtree $T \subset \Gamma$ with a different uni/trivalent subtree $T' \subset \Gamma'$
having the same univalent vertices. Namely, $T$ consists of the edges $e_1, \dots, e_4$
and $e$, while $T'$ consists of the edges $e_1, \dots, e_4$ and $e'$. This suggests the
following generalization of the notion of elementary $F$-move.
 
\begin{definition}\label{Fmove-tree/def}
Let $\Gamma \in \G_{g,b}$ and $T \subset G$ be a uni/trivalent subtree with $m \geq 4$
free ends. Then, we call {\sl elementary (tree) $F$-move}, the modification $F_{T,T'}:
\Gamma \rightmove \Gamma'$ that makes $\Gamma$ into $\Gamma' \in \G_{g,b}$, by replacing
$T$ with a different uni/trivalent tree $T' \subset \Gamma'$ having the same free ends of
$T$, while leaving the rest of the graph unchanged (see Figure~\ref{move-tree/fig} for an
example with $m = 5$). Also in this case, the inverse modification $F_{T',T}: \Gamma'
\rightmove \Gamma$ is an elementary (tree) $F$-move as well.
\end{definition}

\begin{Figure}[htb]{move-tree/fig}
\fig{}{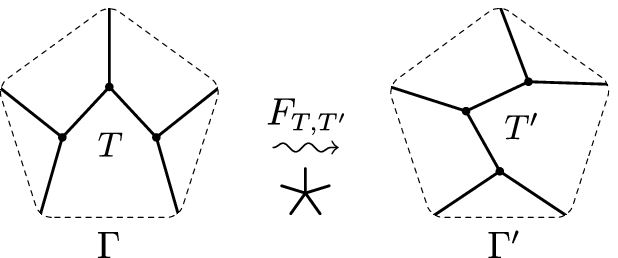}[-3pt]
{}{A tree $F$-move $F_{T,T'}$.}
\end{Figure}

We note that the $F$-move $F_{T,T'}$ is again a kind of fusion move. In fact, it can be
thought as the contraction of the subtree consisting of all the internal edges of $T
\subset \Gamma$ to a single $m$-valent vertex, and the consequent fusion of all the
trivalent vertices of $T$, followed by the inverse of a similar contraction for $T'
\subset \Gamma'$.

If the subtree $T \subset \Gamma$ has the vertices $v_1, \dots, v_m$ as its free ends,
then all the possible ways to perform the $F$-moves $F_{T,T'}$ on $\Gamma$ corresponds to
all the complete iterated couplings of the (unordered) set $\{v_1, \dots ,v_m\}$ induced 
by $T'$, except the one induced by $T$. In particular, according to the above discussion, 
we have 2 ways for $m = 4$.

Actually, any tree $F$-move $F_{T,T'}$ can be realized by a suitable sequence of edge 
$F$-moves performed on the edges of $T$ (and of the corresponding subtrees in the 
resulting graphs). Yet, it makes sense to consider $F_{T,T'}$ as a unique move, and we 
will see shortly why (cf. discussion about the $F$-move in Figure~\ref{move-inv/fig} 
below).

\begin{definition}\label{Fmove/def}
Let $\Gamma \in \G_{g,b}$ and $\T = \{T_1, \dots, T_k\}$ be a family of uni/trivalent
subtrees of $\Gamma$ with pairwise disjoint interiors (that is, two of them possibly share
only some common free ends). Assuming that each $T_i$ has at least 4 free ends, we call
{\sl $F$-move} any modification $F_{\T,\T'}: \Gamma \rightmove \Gamma'$ given by the
simultaneous application of certain elementary moves $F_{T_1,T'_1}, \dots, F_{T_k,T'_k}$
to $\Gamma$. Of course, for $k = 1$ we have an elementary $F$-move.
\end{definition}

For any graph $\Gamma \in \G_{g,b}$ we denote by $\A(\Gamma)$ the group of all
the automorphisms of $\Gamma$ as a non-oriented graph, including those that permute the
free ends. Moreover, we denote by $\A_{g,b} = \cup_{\Gamma \in \G_{g,b}}\A(\Gamma)$ the 
set of all the automorphisms of graphs in $\G_{g,b}$.

\medskip

Now, assume we are given an automorphism $\phi \in \A(\Gamma)$. If $F_{\T,\T'}: \Gamma
\rightmove \Gamma'$ is an $F$-move with $\T = \{T_1, \dots, T_k\}$ a $\phi$-invariant
family of subtrees of $\Gamma$, then the restriction of $\phi$ to $\Gamma - \cup_{i} \Int
T_i$ induces a graph automorphism $\psi$ of $\Gamma' - \cup_i\Int T'_i$, under\break the
canonical isomorphism $\Gamma - \cup_i \Int T_i \cong \Gamma' - \cup_i \Int T'_i$ of
graphs given by $F_{\T,\T'}$. In general, such $\psi$ does not extend to an automorphism
$\phi' \in \A(\Gamma')$, but if it does then the extension $\phi': \Gamma' \to \Gamma'$
can be easily seen to be unique, by taking into account that $\Gamma - \cup_i \Int T_i$
contains all the free ends of all the subtrees $T_i$. When $\phi'$ exists, we say that it
is induced by $\phi$ through $F_{\T,\T'}$.

\begin{definition}\label{Fmove-inv/def}
Given an automorphism $\phi: \Gamma \to \Gamma$, we say that an $F$-move $F_{\T,\T'}:
\Gamma \rightmove \Gamma'$ is {\sl $\phi$-invariant} if $\T$ is a $\phi$-invariant
family of subtrees and $\phi$ induces an automorphism $\phi': \Gamma' \to \Gamma'$ through
$F_{\T,\T'}$ as discussed above. In this case, we write $F_{\T,\T'}: \phi \rightmove
\phi'$. Such relation between automorphisms is symmetric, in the sense that $F_{\T',\T}:
\phi' \rightmove \phi$ holds as well, being $F_{\T',\T}: \Gamma' \rightmove \Gamma$ a
$\phi'$-invariant $F$- move. But it is neither reflexive nor transitive. Then, we call
{\sl F-equivalence} the generated equivalence relation on the set $\A_{g,b}$.
\end{definition}

We observe that the elementary $F$-move in Figure~\ref{move-edge/fig} is $\phi$-invariant 
with respect to any automorphism $\phi \in \A(\Gamma)$ that acts on the depicted 
portion of $\Gamma$ as any planar symmetry (horizontal, vertical or central). On the 
contrary, it is not $\phi$-invariant for a $\phi$ that switches $e_1$ and $e_2$ while 
leaving $e_3$ and $e_4$ fixed.

\begin{Figure}[htb]{move-inv/fig}
\fig{}{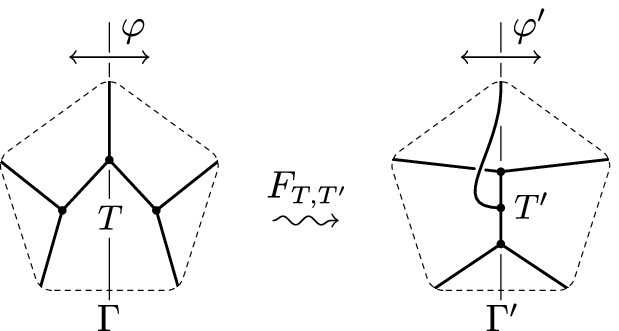}[-3pt]
{}{A $\phi$-invariant $F$-move.}
\end{Figure}

Figure~\ref{move-inv/fig} shows a $\phi$-invariant elementary $F$-move $F_{T,T'}$, where 
$\phi \in \A(\Gamma)$ is any automorphism that acts on $T$ as the simmetry with respect to 
the vertical axis. As we said above, $F_{T,T'}$ could be realized by a sequence of 
elementary edge $F$-moves, but this is not true if we insist that each single edge 
$F$-move is $\phi$-invariant. Finally, notice that instead the $F$-move described in 
Figure~\ref{move-tree/fig} is not $\phi$-invariant (for the same automorphism $\phi$).

\begin{remark}\label{slidings/rem}
In the following, we will essentially use only $\phi$-invariant $F$-moves $F_{\T,\T'}$ 
consisting of simultaneous elementary edge $F$-moves $F_{e,e'}$ or tree $F$-moves 
$F_{T,T'}$ like the one in Figure~\ref{move-inv/fig}. We emphasize once again that the 
$\phi$-invariance of $F$-moves $F_{\T,\T'}$ of does not imply the $\phi$-invariance of 
each single elementary $F$-move, being these possibly permuted by $\phi$. In many cases,
it will be convenient to think of those elementary moves as the edges slidings shown in 
Figure~\ref{move-3d/fig}. Here, the edges $f,f_1$ and $f_2$ are the original ones of 
$\Gamma$, whose ends in the figure are slided to get their new positions in $\Gamma'$ as 
indicated by the arrows, while the rest of the graph is fixed.
\end{remark}

\begin{Figure}[htb]{move-3d/fig}
\fig{}{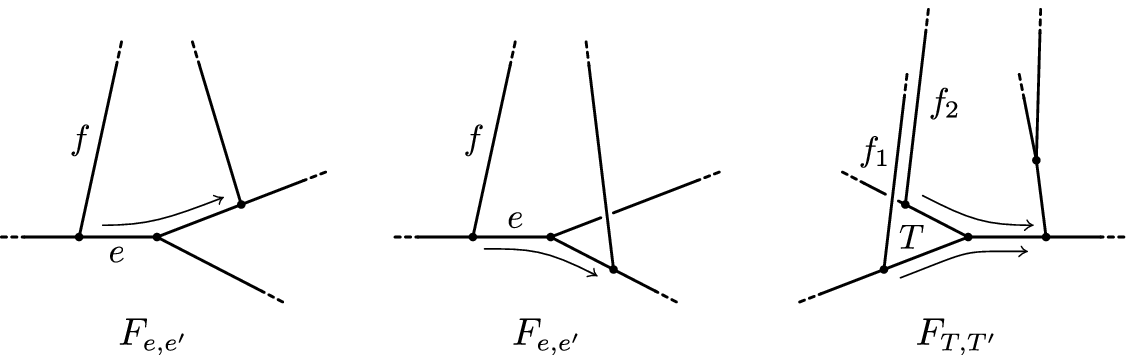}[-3pt]
{}{$F$-moves as edge slidings.}
\end{Figure}

At this point, in order to state our main theorem, we are left to introduce the elementary
automorphisms, which will generate all the automorphisms in $\A_{g,b}$ up to
$F$-equivalence.

\begin{definition}\label{switches/def}
By an {\sl elementary automorphism} of a graph $\Gamma \in \G_{g,b}$ we mean any
automorphism $S_{e_1,e_2}: \Gamma \to \Gamma$ interchanging two adjacent edges $e_1$ and
$e_2$ of $\Gamma$, while fixing all the rest of the graph. We call such an automorphism
$S_{e_1,e_2} \in \A(\Gamma)$ a {\sl terminal switch} or an {\sl internal switch}, 
depending on the fact that $e_1$ and $e_2$ are terminal or internal edges (cf.
Figure~\ref{switches/fig}).
\end{definition}

\begin{Figure}[htb]{switches/fig}
\fig{}{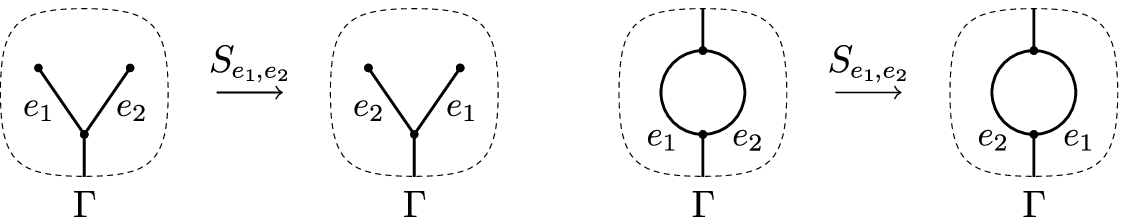}[-3pt]
{}{A terminal switch (left side) and an internal switch (right side).}
\end{Figure}

\begin{remark}
An automorphism in $\Gamma \in \G_{g,b}$ is uniquely determined up to internal switches by
its action on the vertices of $\Gamma$. Moreover, it is uniquely determined up to
(terminal and internal) switches by its action on the trivalent vertices of $\Gamma$.
\end{remark}

To simplify our claims, we provide the next definition.

\begin{definition}\label{Egn/def}
For any $g$ and $b$, we denote by $\E_{g,b} \subset \A_{g,b}$ the smallest subset that
contains all the elementary automorphisms (internal and terminal switches) in $\A_{g,b}$
and is closed with respect to composition and $F$-equivalence of automorphisms.\hfill
\end{definition}

Then, our main result can be stated as follows. 

\begin{theorem}\label{main/thm}
For any $g$ and $b$, we have $\E_{g,b} = \A_{n,b}$.
\end{theorem}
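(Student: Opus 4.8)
The inclusion $\E_{g,b} \subseteq \A_{g,b}$ is immediate, since the elementary switches are automorphisms and $\A_{g,b}$ is trivially closed under both composition (within a single $\A(\Gamma)$) and $F$-equivalence. The content is the reverse inclusion $\A_{g,b} \subseteq \E_{g,b}$, and the plan is to prove it by a single geometric reduction. The starting observation is that a product of elementary switches is exactly an automorphism fixing every trivalent vertex: each terminal or internal switch fixes all trivalent vertices, and conversely, by the Remark following Definition~\ref{switches/def}, an automorphism acting trivially on the trivalent vertices differs from the identity only by switches. Hence every trivalent-vertex-fixing automorphism already lies in $\E_{g,b}$, and since $\E_{g,b}$ is closed under $F$-equivalence, it suffices to show that every $\phi \in \A(\Gamma)$ can be carried, by a finite sequence of $\phi$-invariant $F$-moves, to an automorphism $\phi'$ of some $\Gamma' \in \G_{g,b}$ that fixes all trivalent vertices of $\Gamma'$. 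This is the right target because the permutation of the trivalent vertices induced by $\phi$ is unchanged when $\phi$ is composed with switches, whereas an $F$-move does alter it: contracting and re-expanding an edge replaces its two trivalent ends by two new ones, and this is precisely the mechanism by which the action on trivalent vertices gets unwound.

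Next I would reduce to automorphisms of prime-power order. Writing $\ord(\phi) = \prod_i p_i^{a_i}$, the cyclic group $\langle\phi\rangle$ splits by the Chinese Remainder Theorem as the product of its Sylow subgroups, so $\phi = \phi_1 \cdots \phi_r$ with the $\phi_i$ pairwise commuting powers of $\phi$ in the same $\A(\Gamma)$, each of order $p_i^{a_i}$. As $\E_{g,b}$ is closed under composition, it is enough to prove $\phi_i \in \E_{g,b}$ for each $i$; thus we may assume $\ord(\phi) = p^a$ is a prime power. The payoff of this normalization is local: at any trivalent vertex fixed by $\phi$, the induced permutation of the three incident edges lies in $S_3$, hence has order dividing $\gcd(p^a,6)$. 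Consequently the behaviour at fixed vertices is governed entirely by $p$ — rigid (all three edges fixed) when $p \geq 5$, a rotation of order $3$ when $p = 3$, and a reflection swapping two edges when $p = 2$ — which is exactly the trichotomy separating the special cases treated in the subsequent sections.

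The core is then an induction on the number $N(\phi)$ of trivalent vertices not fixed by $\phi$. If $N(\phi) = 0$ we are done by the first paragraph. If $N(\phi) > 0$, the aim is to exhibit a single $\phi$-invariant $F$-move $F_{\T,\T'} : \phi \to \phi'$ with $N(\phi') < N(\phi)$; then $\phi \sim_F \phi'$, the inductive hypothesis gives $\phi' \in \E_{g,b}$, and closure under $F$-equivalence yields $\phi \in \E_{g,b}$. To build such a move I would locate a non-fixed $\langle\phi\rangle$-orbit of trivalent vertices attached to the rest of the graph in a minimal way, choose a $\phi$-invariant family $\T$ of subtrees spanning the edges around that orbit (so that $\phi$ permutes the members of $\T$), and perform the simultaneous elementary moves that re-pair the edges emanating from the orbit so that the new trivalent ends become individually fixed, as in the vertical-symmetry configuration of Figure~\ref{move-inv/fig}. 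The admissible re-pairings, and the verification that the resulting $\phi'$ is a genuine automorphism of a graph still in $\G_{g,b}$, depend on how $\phi$ identifies the edges around the orbit, and this is where the three local regimes $p \geq 5$, $p = 3$, $p = 2$ each demand their own family of model configurations.

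The main obstacle is precisely this last step: producing, in the small-prime cases $p = 2$ and $p = 3$, an explicit $\phi$-invariant $F$-move that strictly lowers $N(\phi)$ while respecting the nontrivial local symmetry (reflection or rotation) at the neighbouring fixed vertices — and doing so without the move being realizable as a sequence of individually $\phi$-invariant edge moves, which is exactly why tree $F$-moves such as the one in Figure~\ref{move-inv/fig} are indispensable. One must check in each configuration that connectedness, the first Betti number $g$, and the count $b$ of free ends are preserved, that the move does not reintroduce non-fixed vertices elsewhere, and that the finitely many smallest graphs (where no reducing move is available) are handled directly. The induction on $N(\phi)$ then terminates, and combined with the reductions above it gives $\A_{g,b} = \E_{g,b}$.
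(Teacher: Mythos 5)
Your preliminary reductions are sound, and your primary decomposition of $\langle\phi\rangle$ is exactly the paper's first step; likewise your observation that an automorphism fixing every trivalent vertex is a product of switches is correct (it is the Remark after Definition~\ref{switches/def}). But the core of your plan---reaching, by $\phi$-invariant $F$-moves \emph{alone}, an automorphism $\phi'$ that fixes all trivalent vertices---is unachievable, and the obstruction is one of the paper's own lemmas. By Lemma~\ref{ordinv/thm}, $F$-equivalence preserves $\ord(\phi)$. On the other hand, an automorphism fixing every trivalent vertex can only permute the at most three terminal edges at a fixed vertex and the at most three parallel internal edges between a fixed pair of vertices, so its order divides $6$. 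Hence for $\ord(\phi)=p^m$ with $p\geq 5$ (or with $p^m = 9, 27, \dots$) your target state $N(\phi')=0$ simply does not exist in the $F$-equivalence class: take $\Gamma\in\G_{1,5}$ the $5$-cycle with one terminal edge at each vertex and $\phi$ the order-$5$ rotation; no sequence of invariant $F$-moves can trivialize its action on trivalent vertices, so your induction has no step to make. The failure is not confined to large primes: in $\G_{1,3}$ (triangle with three legs, rotation of order $3$) no graph admits an order-$3$ automorphism fixing all three trivalent vertices, since that would require three parallel edges (forcing $\beta_1\geq 2$) or a tripod component (contradicting connectedness), yet the rotation has order $3$, which every $F$-equivalent automorphism must share.

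The step you explicitly discard is the one that makes the theorem true: you argue that composing with switches is useless because it leaves the permutation of trivalent vertices unchanged, but that is the wrong invariant to track. Composing with a switch changes the \emph{order} of the automorphism---the paper's Lemma~\ref{reduction/thm} composes $\phi$ with terminal and internal switches $S_{e_1,e_2}$ (interleaved with invariant $F$-moves) precisely to halve the orders of edge orbits when $\ord(\phi)=2^m$. Moreover, closure of $\E_{g,b}$ under composition is used in a second essential way: Propositions~\ref{onion-p/thm}, \ref{onion-3/thm} and \ref{onion-2/thm} do not trivialize the vertex action by $F$-moves; they show $\phi$ is $F$-equivalent to a product $\tau\circ\sigma$ of two \emph{involutions}, each fixing an edge or reversing an invariant edge---a dihedral factorization of the rotation along the invariant cycles produced by Lemma~\ref{cycle-p/thm}---and each involution is then reduced to the identity by Proposition~\ref{automorphism-2/thm} via Lemma~\ref{reduction/thm}. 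So the workable induction runs on $(m, n_m)$, i.e.\ on the order of $\phi$ and the number of edges of maximal order, driven by switch composition, rather than on the number of moved trivalent vertices driven by $F$-moves; to repair your argument you would have to replace your inductive quantity and your target accordingly.
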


\begin{proof}
We have to show that any element $\phi \in \A_{g,b}$, that is any automorphism $\phi:
\Gamma \to \Gamma$ of a graph $\Gamma \in \G_{g,b}$, actually belongs to $\phi \in
\E_{g,b}$. By primary decomposition of the cyclic subgroup $\langle \phi \rangle \subset
\A(\Gamma)$, we can reduce ourselves to the special case when the order $\ord(\phi)$, that
is the cardinality of $\langle\phi\rangle$, is a prime power. Then, the subcases when
$\ord(\phi) = p^m$ with $p$ prime $> 3$ or $\ord(\phi) = 3^m$ are reduced to the case 
$\ord(\phi) = 2$ in Sections~\ref{order-pm/sec} and \ref{order-3m/sec} respectively. 
Finally, the case of $\ord(\phi) = 2^m$ is proved in Section~\ref{order-2m/sec}.
\end{proof}

%\newpage

\section{Some preliminary results%
\label{lemmas/sec}}

Let $\phi: \Gamma \to \Gamma$ be an automorphism of a uni/trivalent graph $\Gamma$.
Two elements (vertices or edges) or subgraphs $x$ and $y$ of $\Gamma$ are said to be {\sl 
$\phi$-equivalent} if $y = \phi^i(x)$ for some $i \geq 0$. In this case we write $x 
\cong_\phi y$.

We denote by $\ord(\phi)$ the order of $\phi$ in $\A(\Gamma)$ (as an automorphism of a
non-oriented graph), that is the cardinality of $\langle\phi\rangle \subset \A(\Gamma)$.
For a vertex $v$ of $\Gamma$, by the {\sl order of $v$ with respect to $\phi$}, we mean as
usual the cardinality $\ord_\phi(v)$ of its $\phi$-orbit $\Orb_\phi(v) = \{\phi^k(v) \,|\,
0 \leq k < \ord(\phi)\}$. On the contrary, for an edge $e$ of $\Gamma$ with (possibly
coinciding) ends $v$ and $w$, we call {\sl order of $e$ with respect to $\phi$} the number
$\ord_\phi(e) = \LCM(\ord_\phi(v), \ord_\phi(w))$, which is equal to either the
cardinality of the $\phi$-orbit $\Orb_\phi(e) = \{\phi^k(e) \,|\, 0 \leq k < \ord(\phi)\}$
or its double (if $v \neq w$ and $(v,w) \cong_\phi (w,v)$ as ordered pairs). This amounts
to consider $e$ as an oriented edge, except the case when it is a loop.

\begin{lemma}\label{ordinv/thm}
If $\phi$ and $\phi'$ are $F$-equivalent in $\A_{g,b}$, then $\ord(\phi) = \ord(\phi')$.
\end{lemma}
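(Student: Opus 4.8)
The plan is to reduce to a single $\phi$-invariant $F$-move and then exhibit a group isomorphism $\langle\phi\rangle\cong\langle\phi'\rangle$. Since, by Definition~\ref{Fmove-inv/def}, $F$-equivalence is the equivalence relation generated by the already symmetric single-move relation $F_{\T,\T'}\colon\phi\rightmove\phi'$, any $F$-equivalent pair is joined by a finite chain of such single moves. Hence it suffices to prove $\ord(\phi)=\ord(\phi')$ whenever $\phi\rightmove\phi'$ is one $\phi$-invariant $F$-move $F_{\T,\T'}$, and the general case follows by concatenating the equalities along a chain.

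For a single move I would use the induction assignment $\phi\mapsto\phi'$ described after Definition~\ref{Fmove-inv/def}. Writing $U=\Gamma-\cup_i\Int T_i$ and $U'=\Gamma'-\cup_i\Int T'_i$, the $\phi$-invariance of $\T$ makes $U$ a $\phi$-invariant subgraph, so $\phi|_U$ is an automorphism of $U$; and $\phi'$ is the unique automorphism of $\Gamma'$ whose restriction to $U'$ equals $\iota\circ(\phi|_U)\circ\iota^{-1}$, where $\iota\colon U\to U'$ is the canonical isomorphism furnished by $F_{\T,\T'}$ (uniqueness holds because $U$ contains all free ends of the $T_i$). First I would record that this assignment is multiplicative: if $\phi,\chi\in\A(\Gamma)$ both leave $\T$ invariant and induce $\phi',\chi'$, then on $U'$ one has $(\chi\circ\phi)'|_{U'}=\iota\circ(\chi|_U\circ\phi|_U)\circ\iota^{-1}=(\chi'\circ\phi')|_{U'}$, so by uniqueness of the extension $(\chi\circ\phi)'=\chi'\circ\phi'$; likewise $(\id_\Gamma)'=\id_{\Gamma'}$.

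Applying this with $\chi=\phi$ and iterating, every power $\phi^j$ again leaves $\T$ invariant and induces $(\phi^j)'=(\phi')^j$, since the required extension already exists, namely $(\phi')^j$. Thus $\phi^j\mapsto(\phi^j)'=(\phi')^j$ is a well-defined group homomorphism $\langle\phi\rangle\to\langle\phi'\rangle$ — well-defined because $(\cdot)'$ depends only on the automorphism and not on a chosen exponent. The symmetric move $F_{\T',\T}\colon\phi'\rightmove\phi$ supplies the reverse homomorphism $(\phi')^j\mapsto\phi^j$, and the two are mutually inverse; hence $\langle\phi\rangle\cong\langle\phi'\rangle$ and $\ord(\phi)=\ord(\phi')$. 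Equivalently, one may argue by divisibility: $(\phi')^{\ord(\phi)}=(\phi^{\ord(\phi)})'=\id_{\Gamma'}$ gives $\ord(\phi')\mid\ord(\phi)$, and symmetry yields the reverse divisibility.

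I expect the only delicate point to be the bookkeeping behind multiplicativity together with the claim that powers of $\phi$ automatically remain $F$-movable. Concretely, one must verify that $U$ is genuinely $\phi$-invariant (so that $\phi|_U$ is defined) and that $\iota$ intertwines the two restrictions, which is precisely the meaning of ``$\phi$ induces $\phi'$ through $F_{\T,\T'}$''. Once these are in place the remaining argument is purely formal, so no serious obstacle arises beyond this verification.
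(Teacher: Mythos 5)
Your proposal is correct and takes essentially the same route as the paper's proof: reduce to a single $\phi$-invariant move $F_{\T,\T'}$, then exploit the uniqueness of extension from $\Gamma-\cup_i\Int T_i$ (which contains all free ends of the $T_i$) together with the canonical isomorphism $\Gamma-\cup_i\Int T_i\cong\Gamma'-\cup_i\Int T'_i$. The paper states directly that $\ord(\phi)$ and $\ord(\phi')$ both equal the order of the common (conjugate) restriction, and your multiplicativity/divisibility bookkeeping is just a more formal elaboration of that same fact.
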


\begin{proof}
It suffices to consider the case when $\phi$ and $\phi'$ are related by a single $F$-move
$F_{\T,\T'}: \phi \rightmove \phi'$, with $\T = \{T_1, \dots, T_k\}$ and $\T' = \{T'_1,
\dots, T'_k\}$. Then, $\ord(\phi)$ coincides with the order of its restriction to $\Gamma
- \cup_i\, \Int T_i$, by the uniqueness of extension to the subtrees $T_i$. Similarly,
$\ord(\phi')$ coincides with the order of its restriction to $\Gamma' - \cup_i\, \Int
T'_i$. Since those restrictions coincide under the canonical isomorphism $\Gamma - \cup_i
\Int T_i \cong \Gamma' - \cup_i \Int T'_i$, we have $\ord(\phi) = \ord(\phi')$.
\end{proof}

\begin{lemma}\label{orders/thm}
Let $\phi: \Gamma \to \Gamma$ be an automorphism of a (possibly disconnected)
uni/trivalent graph $\Gamma$. If $v$ is a vertex of $\Gamma$ and $e$ is an edge of
$\Gamma$ having $v$ as an end, then we have one of the following: 
\begin{itemize}
\item[1)] $\ord_\phi(e) =
\ord_\phi(v)$; 
\item[2)] $\ord_\phi(e) = 2 \ord_\phi(v)$, which happens if and only if $v$ is a
trivalent vertex, with three distinct edges $e,e'$ and $e''$ exiting from it, such that
$e' = \phi^{\ord_\phi(v)}(e)$ and $\ord_\phi(e'') = \ord_\phi(v)$; 
\item[3)] $\ord_\phi(e) = 3
\ord_\phi(v)$, which happens if and only if $v$ is a trivalent vertex, with three distinct
edges $e,e'$ and $e''$ exiting from it, such that $e' = \phi^{\ord_\phi(v)}(e)$ and $e'' =
\phi^{2 \ord_\phi(v)}(e)$. 
\end{itemize}
Moreover, if $\Gamma$ is connected then the set of all orders of edges is $\{m, 2 m,
\dots, 2^k m\}$ for some $m \geq 1$ and $k \geq 0$ such that $\ord(\phi) = 2^k m$, while
the set of all orders of vertices is $\{m, 2 m, \dots, 2^h m, 2^{j_1} m/3, \dots,
2^{j_\ell} m/3\}$ with $k-1 \leq h \leq k$, $0 \leq l \leq k$, $0 \leq j_1 < \dots <
j_\ell \leq k$, and $m$ multiple of $3$ if $\ell > 0$.
\end{lemma}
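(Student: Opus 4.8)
The plan is to derive everything from the local analysis of parts 1)--3), read simultaneously across the (at most three) edges incident to a single vertex, and then to propagate the resulting numerical constraints over the connected graph by a discrete intermediate value argument. Throughout I write $d_v=\ord_\phi(v)$, $n=\ord(\phi)$, and $\nu_p$ for the $p$-adic valuation.

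First I would fix a vertex $v$ and note that its stabilizer in $\langle\phi\rangle$ is the cyclic group generated by $\phi^{d_v}$, which fixes $v$ and hence permutes the edge-ends at $v$. Feeding each incident edge into parts 1)--3), the orbit type of this permutation is one of exactly three kinds, so the multiset of orders of the edges incident to a trivalent $v$ is $\{d_v,d_v,d_v\}$, $\{d_v,2d_v,2d_v\}$, or $\{3d_v,3d_v,3d_v\}$ (a univalent $v$ contributes only $d_v$, and a loop is absorbed into the same half-edge count). The consequence I would record is valuation-theoretic: the odd part of $\ord_\phi(e)$ is the same for every edge $e$ meeting $v$; moreover $\nu_p(\ord_\phi(e))=\nu_p(d_v)$ for odd $p\neq3$, while $\nu_3$ jumps by one exactly in the third pattern, and $\nu_2(d_v)\le\nu_2(\ord_\phi(e))\le\nu_2(d_v)+1$ for each incident pair.

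Using connectedness I would then globalize. Since edges sharing a vertex have equal odd part, the odd part of $\ord_\phi(e)$ is a single constant $m_{\mathrm{odd}}$, so every edge order is $2^{a_e}m_{\mathrm{odd}}$; as incident edges have $2$-exponents differing by at most one, the discrete intermediate value theorem along walks makes $\{a_e\}$ a full interval $[a_{\min},a_{\max}]$. Setting $m=2^{a_{\min}}m_{\mathrm{odd}}$ and $k=a_{\max}-a_{\min}$ presents the edge orders as $\{m,2m,\dots,2^k m\}$, and since an automorphism fixing every oriented edge is the identity, $n$ equals the least common multiple of the edge orders, which, having a common odd part, is their maximum $2^k m$. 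For the vertices, the same patterns show that the odd part of $d_v$ is $m_{\mathrm{odd}}$ unless $v$ is of the third type, where it is $m_{\mathrm{odd}}/3$; this produces the two families $2^{r_v}m$ and $2^{r_v}m/3$ (with $r_v=\nu_2(d_v)-a_{\min}$), forces $3\mid m$ once any third-type vertex occurs, and gives $0\le r_v\le k$, so the third-type orders are exactly the $2^{j_i}m/3$ with $0\le j_1<\dots<j_\ell\le k$ and no constraint among the $j_i$.

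The remaining and most delicate task is to show that the base orders $2^{r_v}m$ fill $\{m,\dots,2^h m\}$ with $k-1\le h\le k$. The mechanism I would exploit is that the edge-height $a_e$ can only change across a vertex of the middle (swap) pattern, because in the other two patterns all incident edges carry the same $2$-exponent. Hence for each $a\in[a_{\min},a_{\max}-1]$, splitting the edges into those of height $\le a$ and those of height $\ge a+1$ and invoking connectedness yields a swap vertex of $2$-exponent exactly $a$, producing base orders at every relative height $0,\dots,k-1$; and at the top, if every endpoint of a maximal-height edge were a third-type vertex then connectedness would force all edges to have height $a_{\max}$ (a third-type vertex has all its incident edges at one height), contradicting $k\ge1$, so some maximal edge has a base endpoint and $h\in\{k-1,k\}$. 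I expect this last paragraph to be the crux: the patterns only constrain $\ord_\phi(e)/\ord_\phi(v)$ one edge at a time, and the real work is the two connectedness arguments above, together with careful bookkeeping of the exceptional prime $3$ (which shifts valuations at vertices but not at edges) and of the degenerate cases (loops, univalent vertices, $k=0$, and all-third-type graphs, where the base family is empty and $h=k-1$).
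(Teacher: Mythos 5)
Your proposal is correct and takes essentially the same route as the paper's own proof: classify the action of $\phi^{\ord_\phi(v)}$ on the at most three edges exiting $v$ (yielding the three local patterns, with edge orders changing only by a factor of $2$, and only across a swap-type vertex whose order equals the smaller edge order), then propagate over the connected graph, with the tripod pattern accounting for the $2^j m/3$ vertex orders. The paper's proof is a terse version of exactly this argument, and your extra bookkeeping --- the identification $\ord(\phi)=2^k m$ via the least common multiple over oriented edges, the discrete intermediate-value step producing the interval of edge orders and the base vertex orders $m,\dots,2^{k-1}m$, and the degenerate all-tripod case where the base family is empty and $h=k-1$ --- supplies details the paper leaves implicit.
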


\begin{proof}
By definition of order for edges, we immediately have that $\ord_\phi(e)$ is a multiple of
$\ord_\phi(v)$. Moreover, since $\phi^{i \ord_\phi(v)}(e)$ is an edge exiting from
$\phi^{i \ord_\phi(v)}(v) = v$ for any $i \geq 1$, the only possible cases are
$\ord_\phi(e) = \ord_\phi(v), 2 \ord_\phi(e), 3 \ord_\phi(v)$ and these are verified
according to the conditions given in the statement. As a consequence, if the orders of two
adjacent edges are different, then one of them is the double of the other, being those
edges like $e$ and $e''$ in point~2. For $\Gamma$ connected, this implies that the set of
orders of the edges is $\{m, 2 m, \dots, 2^k m\}$ for some $m \geq 1$ and $k \geq 0$. In
order to get the set of orders of vertices, it suffices to observe that if $e$ and $e'$
are two edges exiting from $v$, such that $\ord_\phi(e') = 2 \ord_\phi(e)$, then
$\ord_\phi(v) = \ord_\phi(e)$, which gives the orders $m, 2 m, \dots, 2^h m$ with $k-1
\leq h \leq k$. On the other hand, tripodes of edges whose order has the form $2^j m/3$
may appear, like in point~3, for any $j = 0, \dots, k$.
\end{proof}

By a {\sl path} $\alpha$ between the (possibly coinciding) vertices $v$ and $w$ in a graph
$\Gamma$ we mean a (possibly non-simple) chain of edges having $v$ and $w$ as its ends. We
call the number $\len(\alpha)$ of (non-necessarily distinct) edges in the chain the {\sl
length} of $\alpha$. Moreover, we denote by $\bar \alpha$ the reversed path.

\medskip

The next four lemmas concern minimal paths between vertices in a given $\phi$-orbit.

\begin{lemma}\label{paths-disjoint/thm}
Let $\phi: \Gamma \to \Gamma$ be an automorphism of a (possibly disconnected)
uni/trivalent graph $\Gamma$. Given any (simple) path $\alpha \subset \Gamma$ of minimal
length among all paths joining any two different vertices of a given $\phi$-orbit, let $v$
and $v'$ the ends of $\alpha$ in that $\phi$-orbit. If $i \neq j$ and the four points
$\phi^i(v), \phi^i(v'), \phi^j(v)$ and $\phi^j(v')$ are all distinct, then the paths
$\phi^i(\alpha)$ and $\phi^j(\alpha)$ are disjoint.
\end{lemma}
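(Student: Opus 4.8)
The plan is to argue by contradiction, squeezing a hypothetical common vertex via the minimality of $L := \len(\alpha)$ and then ruling out the residual case by trivalence. The whole point is that all four endpoints lie in a single $\phi$-orbit, so that \emph{every} path joining two distinct ones has length at least $L$; this is the only way minimality enters.

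First I would normalize the indices. Since $\phi^{-i}$ is a graph automorphism carrying $\phi^i(\alpha)$ to $\alpha$ and $\phi^j(\alpha)$ to $\phi^{j-i}(\alpha)$, and sending the four endpoints to $v, v', \phi^{j-i}(v), \phi^{j-i}(v')$ (still pairwise distinct), it suffices to treat $i = 0$. Write $\beta = \phi^n(\alpha)$ with $n = j - i \neq 0$; I must show $\alpha \cap \beta = \emptyset$. Both are simple paths of length $L$, say $\alpha$ running through $v = u_0, u_1, \dots, u_L = v'$ and $\beta$ through $\phi^n(v) = p_0, \dots, p_L = \phi^n(v')$, with $p_k = \phi^n(u_k)$.

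Now suppose, for contradiction, that $w$ is a common vertex, $w = u_a = p_b$. I would form the four ``crossing'' paths obtained by joining the $v$- or $v'$-half of $\alpha$ at $w$ to the $\phi^n(v)$- or $\phi^n(v')$-half of $\beta$ at $w$; their lengths are $a+b$, $a+(L-b)$, $(L-a)+b$, $(L-a)+(L-b)$, and their endpoint pairs are exactly $\{v,\phi^n(v)\}, \{v,\phi^n(v')\}, \{v',\phi^n(v)\}, \{v',\phi^n(v')\}$, each consisting of two distinct vertices of the orbit (here the hypothesis that the four points are distinct is essential). By minimality each length is $\geq L$: summing the first and last forces $a+b = L$, while the middle two give $a \geq b$ and $b \geq a$. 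Hence $a = b = L/2$, so $L$ is even and $w$ is pinned to the unique midpoint $u_{L/2} = p_{L/2}$ of both paths. Since this applies to every common vertex and the midpoint is unique, $\alpha \cap \beta$ consists of the single vertex $w$, interior to both paths (as $0 < L/2 < L$).

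To finish I would invoke trivalence. Being interior to a path, $w$ has degree $\geq 2$, hence is a trivalent vertex of degree exactly $3$. But $\alpha$ uses the edges at $w$ toward $u_{L/2-1}$ and $u_{L/2+1}$, while $\beta$ uses those toward $p_{L/2-1}$ and $p_{L/2+1}$; since $w$ is the only common vertex, these four neighbors are distinct from $w$ and from one another, forcing $w$ to have degree $\geq 4$ --- a contradiction. Thus $\alpha$ and $\beta$ share no vertex, and $\phi^i(\alpha), \phi^j(\alpha)$ are disjoint. The one delicate point, which I would flag as the main obstacle, is precisely this midpoint case: the length bookkeeping alone does not produce a strictly shorter path but only squeezes a common vertex onto the exact midpoint, so the decisive contradiction must come from the degree bound at a trivalent vertex rather than from minimality.
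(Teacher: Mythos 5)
Your proof is correct, and it takes a recognizably different route from the paper's. The paper argues by \emph{edges}: if $\phi^i(\alpha)$ and $\phi^j(\alpha)$ are not disjoint, it asserts they must share a common internal edge $e$ (a step stated without justification there, though it rests on exactly the two observations you make explicit: an endpoint of one path cannot lie on the other, since that would immediately yield a path shorter than $\len(\alpha)$ between distinct orbit vertices, and a common vertex interior to both paths carries two edges of each, so trivalence forces a shared edge by pigeonhole). With the shared edge in hand, the two complementary concatenations $\beta_{i,1}\bar\beta_{j,1}$ and $\beta_{i,2}\bar\beta_{j,2}$ (or the crossed pairs, depending on orientation) have total length $2\len(\alpha)-2$, so one of them is \emph{strictly} shorter than $\alpha$ and minimality is violated outright. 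Your argument instead stays at the level of a common \emph{vertex}: the four crossing paths give the inequality system $a+b\geq L$, $a\geq b$, $b\geq a$, $a+b\leq L$, which yields only equality, pinning any common vertex to the unique common midpoint, and the decisive contradiction comes from the degree bound (four pairwise distinct edges at a trivalent vertex), exactly as you flag. What each approach buys: the paper's edge-based version delivers a one-line strict length contradiction but quietly assumes the shared edge, whereas yours never exhibits a shorter path but makes the trivalence input explicit and proves slightly more along the way, namely that a nonempty intersection would consist of a single interior midpoint of both paths, forcing $\len(\alpha)$ even. Both proofs invoke the distinctness of the four endpoints at the same spot, to guarantee the auxiliary paths join two \emph{different} vertices of the orbit so that minimality applies, and your reduction to $i=0$ via $\phi^{-i}$ is harmless. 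In short: a complete and correct proof, filling in precisely the details the paper elides.
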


\begin{proof}
\begin{Figure}[b]{paths-disjoint-pf/fig}
\vskip-3pt
\fig{}{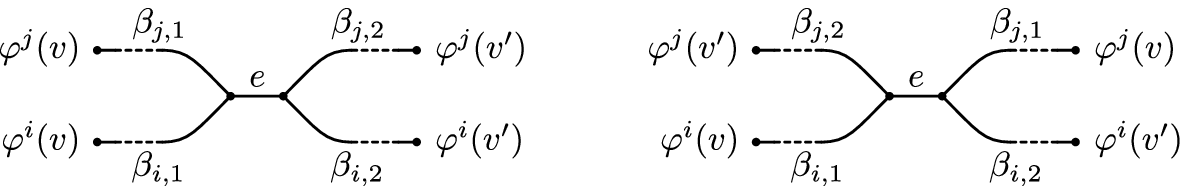}[-3pt]
{}{No common edge $e$ between $\phi^i(\alpha)$ and $\phi^j(\alpha)$ with four distinct 
   ends.}
\end{Figure}
Let $\alpha$ be a path as in the statement. By contradition, assume $\phi^i(\alpha)$ and
$\phi^j(\alpha)$ are not disjoint for some $i \neq j$ satisfying the required condition.
Then, they share at least a common internal edge $e$, and we have one of the two
situations depicted in Figure~\ref{paths-disjoint-pf/fig}, depending on the fact that $e$
is traversed in the same direction or not when going from $\phi^i(v)$ to $\phi^i(v')$
along $\phi^i(\alpha)$ and from $\phi^j(v)$ to $\phi^j(v')$ along $\phi^j(\alpha)$. Notice
that the paths $\beta_{i,1}$ and $\beta_{i,2}$ forming $\phi^i(\alpha) - \Int(e)$ are not
necessarily disjoint from the paths $\beta_{j,1}$ and $\beta_{j,2}$ forming
$\phi^j(\alpha) - \Int(e)$. In any case, we find a path shorter than $\alpha$ between
different vertices in the given $\phi$-orbit. Namely, such path is either $\beta_{i,1}
\bar \beta_{j,1}$ or $\beta_{i,2} \bar \beta_{j,2}$ in the left side case, while it is
either $\beta_{i,1} \bar \beta_{j,2}$ or $\beta_{i,2} \bar \beta_{j,1}$ in the right side
case.
\end{proof}

\begin{lemma}\label{paths-adjacent/thm}
Let $\phi: \Gamma \to \Gamma$, $\alpha \subset \Gamma$, $v$ and $v'$, be as in
Lemma~\ref{paths-disjoint/thm}. If $i \neq j$ with $\phi^j(v) = \phi^i(v')$ and
$\phi^j(v') \neq \phi^i(v)$, then the union $\phi^i(\alpha) \cup \phi^j(\alpha)$ has the
structure shown in Figure~\ref{paths-adjacent/fig}, where: 1)~$k \geq 1$; 2)~the paths
$\delta_{i,1}, \dots, \delta_{i,k} \subset \phi^i(\alpha)$ are disjoint from the paths
$\delta_{j,1}, \dots, \delta_{j,k} \subset \phi^j(\alpha)$; 3)~$\len(\delta_{i,h}) =
\len(\delta_{j,k-h+1}) \geq 1$ for every $h = 1, \dots, k$; 4)~$\len(\delta_{i,1}) =
\len(\delta_{j,k}) \geq \len(\alpha)/2$; 5) all the common paths shared by
$\phi^i(\alpha)$ and $\phi^j(\alpha)$ have length $\geq 1$, except the terminal one ending
at $\phi^j(v) = \phi^i(v')$, which can have length zero.
\end{lemma}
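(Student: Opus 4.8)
The plan is to reduce the entire statement to a study of how the two minimal paths $P = \phi^i(\alpha)$ and $Q = \phi^j(\alpha)$ can overlap once they are known to share a single endpoint. Writing $L = \len(\alpha)$, I would first record that, $\phi$ being a graph automorphism, $P$ and $Q$ are again simple paths of length $L$ joining vertices of the same orbit $\Orb_\phi(v)$ as $v,v'$, so each of them realizes the minimal orbit distance and, crucially, \emph{every walk between two distinct vertices of $\Orb_\phi(v)$ has length at least $L$} (a walk contains a path, and shortening it cannot go below $L$). I set $A = \phi^i(v)$, $B = \phi^i(v') = \phi^j(v)$, $C = \phi^j(v')$, so that $P$ runs from $A$ to $B$ and $Q$ from $B$ to $C$, all three lie in the orbit, and $A \neq C$ by hypothesis (while $A\neq B$ and $B\neq C$ because $v\neq v'$). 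I parametrize $P$ by its vertices $p_0 = A, \dots, p_L = B$ and $Q$ by $q_0 = B, \dots, q_L = C$.

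The heart of the argument is a rigid position estimate for an arbitrary common vertex $x = p_s = q_t$, obtained by splicing initial and final segments of $P$ and $Q$ through $x$ and invoking the length bound above. The walk $A \to x$ along $P$ followed by $x \to C$ along $Q$ has length $s + (L-t)$ and joins the distinct orbit vertices $A,C$, forcing $s+(L-t)\geq L$, i.e. $s \geq t$. The walk $A \to x$ along $P$ followed by $x \to B$ along $\bar Q$ has length $s + t$ and joins $A,B$, so $s+t \geq L$; symmetrically the walk $B \to x$ along $\bar P$ followed by $x \to C$ along $Q$ has length $(L-s)+(L-t)$ and joins $B,C$, so $s+t \leq L$. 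Together these give $s + t = L$ and $s \geq L/2 \geq t$ for every common vertex. Hence every common vertex satisfies the reversed matching $p_s = q_{L-s}$, all common vertices sit at $P$-positions $\geq L/2$, and consecutive ones are linked by edges traversed in opposite senses by $P$ and $Q$ — precisely the zigzag of Figure~\ref{paths-adjacent/fig}.

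From this the listed properties fall out by bookkeeping. Ordering the common arcs $\gamma_1,\dots,\gamma_k$ by increasing $P$-position, the reversed matching lists them in \emph{decreasing} $Q$-position, so the disjoint pieces interleave with the correspondence $\delta_{i,h}\leftrightarrow\delta_{j,k-h+1}$; since $p_s=q_{L-s}$ identifies the $P$-gap between two consecutive common arcs with the equally long $Q$-gap between the same two arcs, property~(3) follows, while property~(2) holds because every vertex of a $\delta_{i,h}$ lies at a non-common $P$-position and hence avoids $Q$ entirely. The piece $\delta_{i,1}$ runs from $A=p_0$ to the first common vertex, whose position is $\geq L/2$, so $\len(\delta_{i,1})\geq L/2$; the mirror computation on $Q$ gives $\len(\delta_{j,k})=\len(\delta_{i,1})$, which is property~(4). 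Finally the common vertex $B$ always exists, so $k\geq 1$, giving property~(1).

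Property~(5) is where trivalency enters, and I expect the only genuinely delicate point to be distinguishing the interior common vertices from the endpoint $B$. At an interior common vertex $x$ the path $P$ uses two of the at most three edges at $x$, and so does $Q$; since $2+2>3$ they must share an edge, so $x$ lies on a common arc of length $\geq 1$ and no interior common arc can collapse to a single vertex. At $B=p_L=q_0$, by contrast, $P$ and $Q$ each use only one edge and $1+1\leq 3$ imposes nothing, so the terminal common arc may legitimately have length zero. The main obstacle is therefore not any single inequality but the organization that welds the three distance bounds, the reversed matching, and the degree count into the exact combinatorial picture of the figure; once the identity $s+t=L$ is secured, everything else is essentially a matter of indexing.
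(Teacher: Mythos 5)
Your proof is correct, and it reorganizes the paper's argument around a sharper tool. The paper works at the level of shared \emph{edges}: it first rules out a common edge traversed in the same direction by both paths (splicing shorter paths as in the proof of Lemma~\ref{paths-disjoint/thm}), shows that an oppositely traversed common edge forces $\len(\beta_{i,1})=\len(\beta_{j,2})\geq\len(\alpha)/2$ (giving properties 1 and 4), proves property 2 by showing that two common edges must occur in opposite orders along the two paths (two incompatible minimality inequalities, $\len(\beta_{i,2})+2\leq\len(\beta_{j,2})$ and its mirror), and then dismisses 3 and 5 as immediate from minimality and trivalency. You instead work at the level of common \emph{vertices} and extract from three splices the exact position identity $s+t=L$ (equivalently $p_s=q_{L-s}$), which the paper never states: it only derives the particular inequalities it needs at each step. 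Your identity buys uniformity — the opposite traversal of shared edges, the reversed order of the common arcs, the equality of corresponding gap lengths in (3), the bound in (4), and the fact that $\phi^i(v)\notin\phi^j(\alpha)$ (hence $k\geq1$) all drop out of one computation — while the paper's version is shorter and more pictorial; your trivalency pigeonhole at interior common vertices is exactly the paper's intended argument for (5), and the identity also explains cleanly why a vertex is interior to one path if and only if it is interior to the other. Two cosmetic points: your gloss that consecutive matched common vertices are ``linked by edges traversed in opposite senses'' is not literally forced (with a double edge they can instead bound a bigon whose sides are a length-one $\delta_{i,h}$ and $\delta_{j,k-h+1}$), but nothing downstream uses it, since your degree count only claims each interior common vertex lies on \emph{some} shared edge; and $k\geq1$ follows not from the mere existence of the common vertex $\phi^j(v)=\phi^i(v')$ but from the observation that $s=0$ would force $\phi^i(v)=\phi^j(v')$, so the initial piece $\delta_{i,1}$ is nonempty — which your bound $\len(\delta_{i,1})\geq\len(\alpha)/2$ already delivers. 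As a bonus, the same identity would also streamline the companion Lemmas~\ref{paths-diagonal/thm} and \ref{paths-doubled/thm}, whose proofs the paper obtains by perturbing this one.
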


\begin{Figure}[htb]{paths-adjacent/fig}
\vskip-6pt
\fig{}{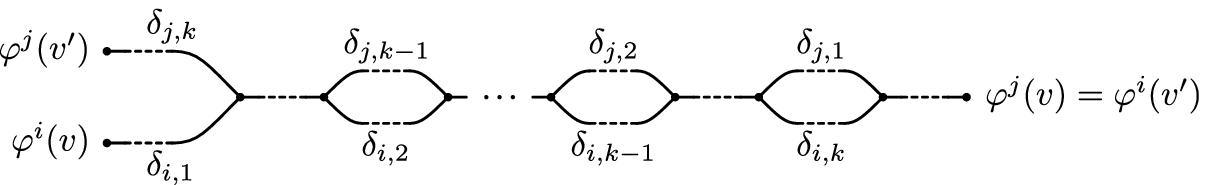}[-3pt]
{}{$\phi^i(\alpha) \cup\phi^j(\alpha)$ 
   for $\phi^j(v) = \phi^i(v')$ and $\phi^j(v') \neq \phi^i(v)$.}
\end{Figure}

\begin{proof}
Look at Figure~\ref{paths-disjoint-pf/fig} assuming $\phi^j(v) = \phi^i(v')$ and
$\phi^j(v') \neq \phi^i(v)$. By a similar argument as in the proof of the previous lemma,
the minimality of $\alpha$ implies that $\phi^i(\alpha)$ and $\phi^j(\alpha)$ cannot share
an edge $e$ like in the left side of the figure, while if $\phi^i(\alpha)$ and
$\phi^j(\alpha)$ share an edge $e$ like in the right side of the figure then
$\len(\beta_{i,1}) = \len(\beta_{j,2}) \geq \len(\alpha)/2$. This gives properties 1 and
4.

\begin{Figure}[htb]{paths-adjacent-pf/fig}
\fig{}{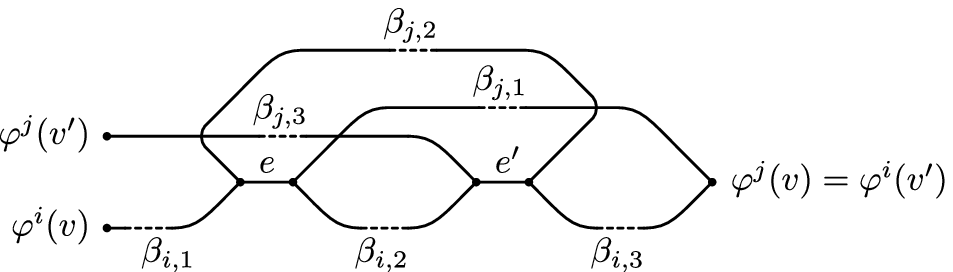}[-3pt]
{}{Common edges $e$ and $e'$ cannot occur in the same order along the paths 
$\phi^i(\alpha)$ and $\phi^j(\alpha)$ in Lemma~\ref{paths-adjacent/thm}.}
\end{Figure}

For property 2, it suffices to observe that, if $e$ and $e'$ are two common edges of
$\phi^i(\alpha)$ and $\phi^j(\alpha)$, then they occur in opposite orders along the two
paths starting from $\phi^i(v)$ and $\phi^i(v')$ respectively. Indeed, in the contrary
case, we would have $\phi^i(\alpha) = \beta_{i,1} e \beta_{i,2} e' \beta_{i,3}$ and
$\phi^j(\alpha) = \beta_{j,1} \bar e \beta_{j,2} \bar e' \beta_{j,3}$ like in
Figure~\ref{paths-adjacent-pf/fig}, where $\beta_i$'s and the $\beta_j$'s are possibly
empty and non-necessarily disjoint. Then, the minimality of $\phi^i(\alpha)$ would imply
$\len(\beta_{i,2}) + 2 \leq \len(\beta_{j,2})$, while the minimality of $\phi^j(\alpha)$
would imply $\len(\beta_{j,2}) + 2 \leq \len(\beta_{i,2})$, which would be absurd.

At this point, property~3 and 5 immediately follow from the minimality of $\phi^i(\alpha)$
and $\phi^j(\alpha)$ and from the trivalency of the vertices respectively.
\end{proof}

\begin{lemma}\label{paths-diagonal/thm}
Let $\phi: \Gamma \to \Gamma$, $\alpha \subset \Gamma$, $v$ and $v'$, be as in
Lemma~\ref{paths-disjoint/thm}. If $i \neq j$ with $\phi^j(v) = \phi^i(v')$ and
$\phi^j(v') = \phi^i(v)$, then the union $\phi^i(\alpha) \cup \phi^j(\alpha)$ has the
structure shown in Figure~\ref{paths-diagonal/fig}, where: 1)~$k \geq 0$; 2)~the paths
$\delta_{i,1}, \dots, \delta_{i,k} \subset \phi^i(\alpha)$ are disjoint from the paths
$\delta_{j,1}, \dots, \delta_{j,k} \subset \phi^j(\alpha)$; 3)~$\len(\delta_{i,h}) =
\len(\delta_{j,k-h+1}) \geq 1$ for every $h = 1, \dots, k$; 4) all the common paths shared
by $\phi^i(\alpha)$ and $\phi^j(\alpha)$ have length $\geq 1$, except both the terminal
ones that can have length zero.

Moreover, this may happen only when $\ord(\phi)$ is even.
\end{lemma}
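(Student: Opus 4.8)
The plan is to follow closely the strategy of Lemma~\ref{paths-adjacent/thm}, now exploiting the extra symmetry coming from the fact that \emph{both} endpoints of $\alpha$ are interchanged. Writing $s = j - i$, the hypotheses $\phi^j(v) = \phi^i(v')$ and $\phi^j(v') = \phi^i(v)$ are equivalent to $\phi^s(v) = v'$ and $\phi^s(v') = v$; that is, $\phi^s$ swaps the two ends of $\alpha$ and carries $\phi^i(\alpha)$ onto $\phi^j(\alpha)$. Consequently both $\phi^i(\alpha)$ and $\phi^j(\alpha)$ are minimal paths joining the \emph{same} pair of vertices $A = \phi^i(v)$ and $B = \phi^i(v')$, traversed in opposite directions.

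I would first dispose of the \emph{moreover} clause. From $\phi^s(v) = v'$ and $\phi^s(v') = v$ we get $\phi^{2s}(v) = v$, so $\ord_\phi(v)$ divides $2s$; on the other hand $\ord_\phi(v)$ does not divide $s$, since $\phi^s(v) = v' \neq v$. Hence $\ord_\phi(v)$ cannot be odd (an odd divisor of $2s$ would divide $s$), and being a divisor of $\ord(\phi)$ it forces $\ord(\phi)$ to be even.

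For the structural claims I would rerun the two minimality arguments of the previous lemmas. Exactly as in Lemma~\ref{paths-disjoint/thm}, a common edge traversed ``in the same direction'' (the left-hand situation of Figure~\ref{paths-disjoint-pf/fig}) would let one splice the two paths into a strictly shorter path joining the two \emph{distinct} orbit vertices $\phi^i(v)$ and $\phi^j(v) = \phi^i(v')$, contradicting minimality. The crucial new point --- and the reason this configuration is genuinely possible, unlike the disjoint case --- is that the two splices available in the ``opposite direction'' situation (right-hand side of Figure~\ref{paths-disjoint-pf/fig}) now join $\phi^i(v)$ to $\phi^j(v') = \phi^i(v)$ and $\phi^i(v')$ to $\phi^j(v) = \phi^i(v')$, i.e.\ each vertex to itself: they produce loops rather than shortcuts, so no contradiction arises and oppositely traversed shared edges are permitted. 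A repetition of the argument behind Figure~\ref{paths-adjacent-pf/fig} then shows that any two common edges occur in opposite orders along the two paths, which forces the alternating ``ladder'' pattern of Figure~\ref{paths-diagonal/fig}: the two paths agree along common segments $c_0, c_1, \dots, c_k$ and split, between consecutive junctions, into the disjoint diagonal pairs $\delta_{i,h}$ and $\delta_{j,k-h+1}$.

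Finally I would read off properties 1--4. Property~2 is immediate, the $\delta$'s being by construction the non-shared portions between successive common vertices. Property~3, $\len(\delta_{i,h}) = \len(\delta_{j,k-h+1})$, follows because these two diagonals join the same pair of junction vertices, so unequal lengths would let me shorten one of the two minimal paths. Property~4 is a trivalency count: at an \emph{interior} junction the two incoming and two outgoing diagonal edges are pairwise distinct (a coincidence would make them common, hence not diagonal), so a vanishing interior common segment would create a vertex of valence $\geq 4$; at the two \emph{terminal} junctions $A$ and $B$ only one edge of each path is involved, so $c_0$ and $c_k$ may legitimately have length zero. The fully shared configuration, where $\phi^j(\alpha)$ is just $\phi^i(\alpha)$ traversed backwards, accounts for $k = 0$ in property~1, in contrast with the $k \geq 1$ of Lemma~\ref{paths-adjacent/thm}. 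The one routine point to double-check is the bookkeeping matching $\delta_{i,h}$ with $\delta_{j,k-h+1}$ rather than with $\delta_{j,h}$; this comes from the reversal of orientation between the two paths and is forced by the opposite-order property established above, and it is the main thing to get right in the writeup.
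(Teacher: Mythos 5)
Your proof is correct and takes essentially the same approach as the paper, whose own proof simply notes that the arguments of Lemma~\ref{paths-adjacent/thm} (the splice and opposite-order minimality arguments you rerun) carry over verbatim, the hypothesis $\phi^j(v') \neq \phi^i(v)$ there having been used only to get $k \geq 1$ and the nonzero length of one terminal common path. Your evenness argument via $\ord_\phi(v) \mid 2s$ but $\ord_\phi(v) \nmid s$ is just a slightly more explicit packaging of the paper's observation that $\phi^{j-i}$ swaps $v$ and $v'$, so that $\ord(\phi^{j-i})$, and hence $\ord(\phi)$, is even.
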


\begin{Figure}[htb]{paths-diagonal/fig}
\vskip-3pt
\fig{}{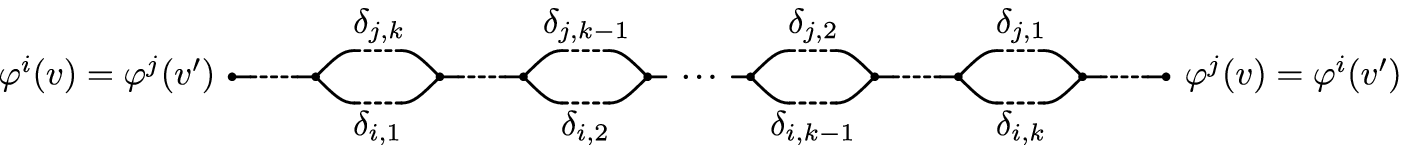}[-3pt]
{}{$\phi^i(\alpha) \cup\phi^j(\alpha)$ 
   for $\phi^i(v) = \phi^j(v')$ and $\phi^j(v) = \phi^i(v')$.}
\end{Figure}

\begin{proof}
The same arguments in the proof of Lemma~\ref{paths-adjacent/thm} still work here, to give
the first part of the statement. In fact, in that proof the assumption that $\phi^j(v')
\neq \phi^i(v)$ was only used to obtain $k \geq 1$ and property 4 (in the statement of
that lemma). 

For the last sentence, we observe that the conditions $\phi^j(v) = \phi^i(v')$ and
$\phi^j(v') = \phi^i(v)$ imply that $\phi^{j-i}$ swaps $v$ and $v'$, which in turn implies
that $\ord(\phi^{j-i})$, and hence $\ord(\phi)$ as well, is even.
\end{proof}

\begin{lemma}\label{paths-doubled/thm}
Let $\phi: \Gamma \to \Gamma$, $\alpha \subset \Gamma$, $v$ and $v'$, be as in
Lemma~\ref{paths-disjoint/thm}. If $i \neq j$ with $\phi^i(v) = \phi^j(v)$ and $\phi^i(v')
= \phi^j(v')$, then the union $\phi^i(\alpha) \cup \phi^j(\alpha)$ has the structure shown
in Figure~\ref{paths-doubled/fig}, where: 1)~$k \geq 0$; 2)~the paths $\delta_{i,1},
\dots, \delta_{i,k} \subset \phi^i(\alpha)$ are disjoint from the paths $\delta_{j,1},
\dots, \delta_{j,k} \subset \phi^j(\alpha)$; 3)~$\len(\delta_{i,h}) = \len(\delta_{j,h})
\geq 1$ for every $h = 1, \dots, k$; 4) all the common paths shared by $\phi^i(\alpha)$
and $\phi^j(\alpha)$ have length $\geq 1$, except both the terminal ones that can have 
length zero. 

Moreover, apart for the trivial case of $\phi^i(\alpha) = \phi^j(\alpha)$, 
this may happen only when $\ord(\phi)$ is either even or an odd multiple of 3, being in 
the latter case $\phi^i(\alpha) = \delta_{i,1}$ and $\phi^j(\alpha) = \delta_{j,1}$.
\end{lemma}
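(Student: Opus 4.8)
The plan is to set $\sigma = \phi^{j-i}$ and $u = \phi^i(v) = \phi^j(v)$, $u' = \phi^i(v') = \phi^j(v')$, so that $\sigma$ fixes both $u$ and $u'$ and carries $P := \phi^i(\alpha)$ onto $Q := \phi^j(\alpha)$, two minimal paths traversed from $u$ to $u'$. Since $\sigma$ fixes the two endpoints it preserves this orientation, which is exactly why in item~3 the indices match directly, instead of being reversed as in Lemma~\ref{paths-diagonal/thm}. For the structural items 1--4 I would run almost verbatim the minimality/``shorter path'' arguments already used for Lemma~\ref{paths-adjacent/thm}: two minimal paths with common endpoints decompose into alternating shared segments $S_0, S_1, \dots$ and detours $\delta_{i,h}, \delta_{j,h}$; the detours are pairwise disjoint and every non-terminal shared segment has positive length (otherwise $\alpha$ would not be minimal), while $\len(\delta_{i,h}) = \len(\delta_{j,h})$ follows from $\sigma(\delta_{i,h}) = \delta_{j,h}$.

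A decisive preliminary observation, which I would isolate first, is that \emph{every shared edge is fixed by $\sigma$}. Indeed, $\sigma$ sends the $h$th subpath of $P$ to the $h$th subpath of $Q$ in the $u \to u'$ order, hence it fixes each junction vertex and maps each shared segment $S_h$ onto itself; an orientation-preserving self-map of a finite segment fixing its endpoints is the identity, so $\sigma$ fixes every edge of every $S_h$. With this in hand the arithmetic restriction becomes purely local. At an internal junction (a trivalent vertex ending a non-terminal $S_h$) the incoming shared edge is $\sigma$-fixed, so $\sigma$ must transpose the two distinct outgoing edges of the adjacent detours; this $2$-cycle forces $2 \mid \ord(\sigma) \mid \ord(\phi)$, i.e. $\ord(\phi)$ even. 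At a trivalent endpoint at which $P$ and $Q$ split, $\sigma$ either transposes the two path-edges (again $\ord(\phi)$ even) or cyclically permutes all three incident edges, giving $3 \mid \ord(\sigma) \mid \ord(\phi)$.

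Assembling these, suppose $\ord(\phi)$ is odd and $P \neq Q$. Then $\sigma$ has odd order and contains no transposition, so no internal split may occur; by the preliminary observation this rules out every non-terminal shared segment, and it likewise rules out a terminal one, since a shared edge at $u'$ (or $u$) is $\sigma$-fixed and would propagate the sharing inward until $P = Q$. Hence $P$ and $Q$ share no edge, the splits at the trivalent endpoints $u, u'$ are forced to be $3$-cycles, and we obtain $k = 1$ with $\phi^i(\alpha) = \delta_{i,1}$, $\phi^j(\alpha) = \delta_{j,1}$ and $3 \mid \ord(\phi)$, that is an odd multiple of $3$. If in addition $\ord(\phi)$ is coprime to $3$ the $3$-cycles are excluded as well and only $P = Q$ survives. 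Finally, if $v$ (hence $u$) is univalent its unique edge is shared and $\sigma$-fixed, so the same propagation forces $P = Q$ whenever $\ord(\phi)$ is odd.

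The step I expect to be the main obstacle is the preliminary observation that shared edges are $\sigma$-fixed, together with the endpoint bookkeeping that separates the trivalent case (where a genuine $3$-cycle can occur and produces the odd-multiple-of-$3$ branch) from the univalent case (where sharing is forced and everything collapses). Once $\sigma$-fixedness of the shared edges is secured, the clean $2$-cycle/$3$-cycle dichotomy, and hence the whole final sentence of the lemma, follow without further computation.
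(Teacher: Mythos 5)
Your proposal is correct and takes essentially the same route as the paper's proof: the structural items 1--4 are obtained by the same minimality arguments as in Lemmas~\ref{paths-adjacent/thm} and \ref{paths-diagonal/thm} (with the orderings now coinciding), and the order restriction comes from $\sigma = \phi^{j-i}$ fixing every shared edge, so that at each divergence vertex $\sigma$ acts either as a transposition of the two diverging edges (forcing $\ord(\phi)$ even, via the edge-orbit dichotomy of Lemma~\ref{orders/thm}) or as a $3$-cycle at a trivalent endpoint (forcing $3 \mid \ord(\phi)$). The only difference is cosmetic: you run the $3$-divisibility argument locally on the edges at $\phi^i(v)$, whereas the paper phrases the same mechanism globally as a cyclic permutation of the three paths $\phi^i(\alpha)$, $\phi^j(\alpha)$ and $\phi^{2j-i}(\alpha)$, and you usefully make explicit the $\sigma$-fixedness of shared edges and the univalent-endpoint case, which the paper leaves implicit in its citation of Lemma~\ref{orders/thm}.
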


\begin{Figure}[htb]{paths-doubled/fig}
\fig{}{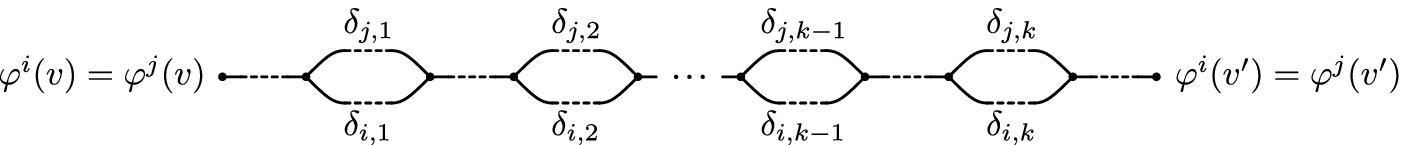}[-3pt]
{}{$\phi^i(\alpha) \cup\phi^j(\alpha)$ 
   for $\phi^i(v) = \phi^j(v)$ and $\phi^i(v') = \phi^j(v')$.}
\end{Figure}

\begin{proof}
The proof of the first part of the statement is completely analogous to that of the 
previous lemma, except for the orderings of the $\delta_i$'s along $\phi^i(\alpha)$ and of 
the $\delta_j$'s along $\phi^j(\alpha)$, which now coincide. 

To prove the second part, assume that $\phi^i(\alpha) \neq \phi^j(\alpha)$ and
$\ord(\phi)$ is odd. Then, we have $\phi^i(\alpha) = \delta_{i,1}$ and $\phi^j(\alpha) =
\delta_{j,1}$. Other\-wise, $\phi^{j-i}$ would fix some edge of $\phi^i(\alpha) \cap
\phi^j(\alpha)$ and $\ord(\phi^{j-i})$ should be even by Lemma~\ref{orders/thm}, in
contrast with the oddness of $\ord(\phi)$. For the same reason $\phi^{j-i}$ cannot swap
$\phi^i(\alpha)$ and $\phi^j(\alpha)$, then it cyclically permutes the three paths
$\phi^i(\alpha)$, $\phi^j(\alpha)$ and $\phi^{2j-i}(\alpha)$, which meet each other only
at their common ends $\phi^i(v)$ and $\phi^i(v')$. This implies that $\ord(\phi^{j-i})$,
and hence $\ord(\phi)$ as well, is a multiple of 3.
\end{proof}

%\newpage

\section{The case of order $p^m$ with $p$ prime greater than $3$%
\label{order-pm/sec}}

The aim of this section is to show that any automorphism $\phi \in \A_{g,b}$ with order
$\ord(\phi) = p^m$ for a prime $p > 3$ is $F$-equivalent to the composition of two
automorphisms of order $2$. 

Actually, we will prove this fact under the weaker assumption that $n = \ord(\phi)$ is a
multiple of neither $2$ nor $3$. In fact, all the arguments are only based on the 
following special properties, which hold for any automorphism $\phi: \Gamma \to \Gamma$
of a (possibly disconnected) uni/trivalent graph $\Gamma$, having such an order $n$:
\begin{itemize}
\item[1)] $\ord_\phi(v) = \ord_\phi(e) = n$ for any vertex $v$ and edge $e$ of $\Gamma$, 
thanks to Lemma~\ref{orders/thm};
\item[2)] the situations of Lemmas~\ref{paths-diagonal/thm} and \ref{paths-doubled/thm} 
cannot occur.
\end{itemize}
\medskip

\begin{lemma}\label{cycle-p/thm}
Let $\phi: \Gamma \to \Gamma$ be an automorphism of a (possibly disconnected)
uni/trivalent graph $\Gamma$, whose order $n = \ord(\phi)$ is a multiple of neither $2$
nor $3$.\break If $\alpha \subset \Gamma$ is a (simple) path of minimal length among all
the paths joining any two distinct $\phi$-equi\-valent vertices, then $\cup_i\,
\phi^i(\alpha)$ is a disjoint union $C \sqcup \phi(C) \sqcup \dots \sqcup
\phi^{n/\ell - 1}(C)$ of $n/\ell$ simple cycles (possibly a single one, for $\ell = n$),
each given by a concatenation of $\ell$ images of $\alpha$ with $\ell$ a divisor of $n$
greater than $1$.\break In fact, there exists a positive integer $s = t\, n/\ell < n$
with $(t,\ell) = 1$, such that the cycle $C$ is given by $\alpha \,\phi^s(\alpha)
\cdots \phi^{(\ell - 1) s}(\alpha)$. Moreover, up to $F$-equivalence we can assume 
$\alpha$ to consist of a single edge $a$ (see Figure \ref{cycle/fig}, where $a_i$ stands
for $\phi^{is}(a)$ and a similar notation is adopted for the $v_i$'s as well).
\end{lemma}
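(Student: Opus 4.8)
The plan is to leverage the two special properties recorded just above (items~1 and~2) together with the minimality of $\alpha$ to pin down exactly how the translates $\phi^i(\alpha)$ can meet. Since $\ord_\phi(v) = n$ by Lemma~\ref{orders/thm}, I can write $v' = \phi^s(v)$ for a unique $s$ with $0 < s < n$, so that $\phi^i(\alpha)$ joins $\phi^i(v)$ to $\phi^{i+s}(v)$. First I would squeeze from minimality the key fact that any two distinct $\phi$-equivalent vertices lying on $\alpha$ must be its endpoints $v$ and $v'$: otherwise a proper subpath of $\alpha$ would join two distinct $\phi$-equivalent vertices and be strictly shorter than $\alpha$, contradicting its minimality.

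Next I would classify the intersection $\phi^i(\alpha) \cap \phi^j(\alpha)$ for $i \neq j$ through its four endpoints $\phi^i(v), \phi^{i+s}(v), \phi^j(v), \phi^{j+s}(v)$. Because $n$ is odd we have $2s \not\equiv 0 \pmod n$, so the endpoint patterns of Lemmas~\ref{paths-diagonal/thm} and~\ref{paths-doubled/thm} are impossible (item~2); when the four endpoints are distinct Lemma~\ref{paths-disjoint/thm} forces disjointness; and the only surviving case is the adjacent one $j \equiv i \pm s \pmod n$, governed by Lemma~\ref{paths-adjacent/thm}. This adjacent case is the crux, since a priori the two translates might share edges. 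I would exclude this using the vertex fact above, and here oddness is indispensable: a common edge $e$ of $\phi^i(\alpha)$ and $\phi^{i+s}(\alpha)$ would place on $\alpha$ the two distinct $\phi$-equivalent edges $\phi^{-i}(e)$ and $\phi^{-i-s}(e)$; tracing their endpoints through the constraint that every pair of distinct $\phi$-equivalent vertices of $\alpha$ is $\{v,v'\}$, and using $2s \not\equiv 0$, forces one of these edges to be either a loop or an edge directly joining $v$ and $v'$, which is impossible on the simple path $\alpha$. Hence adjacent translates meet only at the shared vertex $\phi^{i+s}(v) = \phi^i(v')$.

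With the meeting pattern settled, I would assemble the cycles purely combinatorially. Indexing the orbit vertices by the residues $i$ modulo $n$ and joining $i$ to $i+s$ via $\phi^i(\alpha)$, the connected pieces are exactly the cosets of the subgroup $\langle s \rangle$ of residues modulo $n$. Putting $\ell = n/\gcd(s,n)$, the order of $s$ modulo $n$, each coset carries $\ell$ translates of $\alpha$ closing into a cycle; by the previous step (consecutive translates sharing only a vertex, all other pairs disjoint) this cycle is simple, and there are $n/\ell$ of them. Since $s \not\equiv 0$ we have $\ell > 1$, and since $\phi$ raises the coset index by $1$ it permutes the cycles in a single $(n/\ell)$-cycle $C, \phi(C), \dots, \phi^{n/\ell - 1}(C)$, pairwise disjoint because indices in distinct cosets never differ by $\pm s$ (Lemma~\ref{paths-disjoint/thm} again). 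Writing $s = t\,(n/\ell)$ with $t = s/\gcd(s,n)$, one checks $\gcd(t,\ell)=1$ and $t < \ell$, and the representative cycle is $C = \alpha\,\phi^{s}(\alpha)\cdots\phi^{(\ell-1)s}(\alpha)$, as claimed.

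It remains to reduce $\alpha$ to a single edge up to $F$-equivalence. Each interior vertex of $\alpha$ is trivalent, with exactly one edge leaving the path; performing simultaneously, over the entire $\phi$-orbit of such a vertex, the corresponding elementary edge $F$-move realized as the edge sliding of Remark~\ref{slidings/rem} yields a $\phi$-invariant $F$-move that shortens $\alpha$ by one edge while preserving both the cycle decomposition and, by Lemma~\ref{ordinv/thm}, the order $n$. Iterating until $\len(\alpha)=1$ lands us in the configuration of Figure~\ref{cycle/fig}, with $\alpha$ a single edge $a$, which finishes the proof. The main obstacle is the adjacent case of the second paragraph, where the shared-edge possibility left open by Lemma~\ref{paths-adjacent/thm} must be killed off, and it is precisely the coprimality of $n$ with $6$ (through $2s \not\equiv 0$) that makes this work.
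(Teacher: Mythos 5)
Your proposal is correct, and its skeleton coincides with the paper's proof of Lemma~\ref{cycle-p/thm}: both classify the intersections $\phi^i(\alpha) \cap \phi^j(\alpha)$ through Lemmas~\ref{paths-disjoint/thm} to \ref{paths-doubled/thm}, exclude the diagonal and doubled patterns via the order hypotheses, assemble the cycles from the surviving adjacent pattern (with the same arithmetic of $s$, $\ell$ and $t$), and reduce $\len(\alpha)$ to $1$ by the same $\phi$-invariant simultaneous slidings of Remark~\ref{slidings/rem}. Where you genuinely diverge is the crux you yourself flag: excluding shared edges in the adjacent case. The paper stays inside the structure of Lemma~\ref{paths-adjacent/thm}, using its property~4 ($\len(\delta_{i,1}) = \len(\delta_{j,k}) \geq \len(\alpha)/2$) to place $\phi^j(u)$ on $\delta_{j,k}$, where $u \in \alpha$ is the preimage of the common end of $\delta_{i,1}$ and $\delta_{j,k}$; global minimality then forces $\phi^j(u)$ to be $\phi^j(v')$ or $\phi^i(u)$, the latter is excluded by $\ord_\phi(u) = n$, and $u = v'$ yields intersection only at the junction. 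Your argument replaces this quantitative bookkeeping with the observation that a common edge $e$ would put the two distinct $\phi$-equivalent edges $\phi^{-i}(e)$ and $\phi^{-i-s}(e)$ on $\alpha$, whereupon your ``key fact'' (any two distinct $\phi$-equivalent vertices on $\alpha$ are exactly $\{v,v'\}$ --- a clean consequence of global minimality that the paper never isolates) together with $2s \not\equiv 0 \pmod n$ forces both ends of $\phi^{-i}(e)$ to equal $v'$, i.e.\ a loop on the simple path $\alpha$, a contradiction; the same key fact also disposes of isolated shared vertices and of a third translate through a junction, which the paper handles instead by appealing to the doubled pattern of Lemma~\ref{paths-doubled/thm}. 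Your route is marginally more economical, needing from Lemma~\ref{paths-adjacent/thm} only the qualitative fact (from trivalency) that a nontrivial overlap contains an edge. One small imprecision in your closing sentence: coprimality with $3$ does not act ``through $2s \not\equiv 0$'' --- that congruence uses only oddness of $n$; the hypothesis $3 \nmid n$ is consumed earlier, in item~1 ($\ord_\phi(v) = n$ for every vertex, via Lemma~\ref{orders/thm}) and in ruling out the nontrivial doubled configuration, both of which you do invoke at the start, so this is a matter of wording rather than a gap.
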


\begin{Figure}[htb]{cycle/fig}
\fig{}{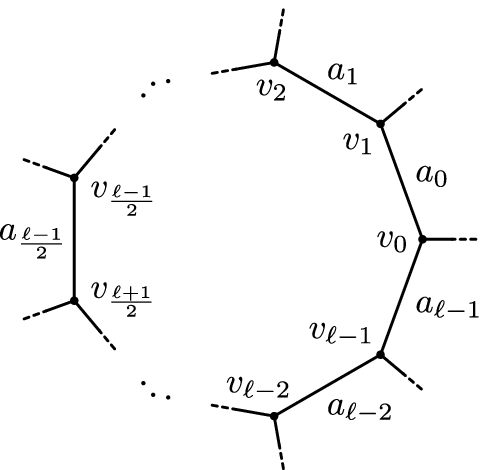}[-3pt]
{}{The form of the cycle $C$ in Lemma~\ref{cycle-p/thm}.}
\end{Figure}

\begin{proof}
Let $\alpha$ be a path as in the statement and let $v \neq v'$ be its ends. For any $i
\neq j \!\!\mod n$, let us consider the two paths $\phi^i(\alpha)$ and $\phi^j(\alpha)$,
and the four different situations described in Lemmas~\ref{paths-disjoint/thm} to
\ref{paths-doubled/thm}, which cover all the possibilities, thanks to the
$\phi$-equivalence of $v$ and $v'$.

The situations of Lemmas~\ref{paths-diagonal/thm} and \ref{paths-doubled/thm} cannot
occur, hence in particular $\phi^i(\alpha) \neq \phi^j(\alpha)$. Therefore,
$\phi^i(\alpha)$ and $\phi^j(\alpha)$ can have non-empty intersection only as in
Lemma~\ref{paths-adjacent/thm}. In this situation, if $\phi^i(u)$ with $u \in \alpha$ is
the common end of $\delta_{i,1}$ and $\delta_{j,k}$, then $\phi^j(u) \in \delta_{j,k}$
since $\len(\delta_{i,1}) = \len(\delta_{j,k}) \geq \len(\alpha)/2$. Actually, $\phi^j(u)$
has to coincide with either $\phi^j(v')$ or $\phi^i(u)$, otherwise the global minimality
of $\alpha$ would be contradicted.

Since $\phi^j(u) = \phi^i(u)$ is impossible, being $\ord_\phi(u) = n$, we are left with
the unique possibility $\phi^j(u) = \phi^j(v')$, that is $u = v'$. Then, we have
$\phi^i(\alpha) = \delta_{i,1}$ and $\phi^j(\alpha) = \delta_{j,k}$, which meet only at
their common end $\phi^j(v) = \phi^i(v')$. Moreover, such end cannot be shared by any
other $\phi^h(\alpha)$. Otherwise, either $\phi^i(\alpha)$ and $\phi^h(\alpha)$ or
$\phi^j(\alpha)$ and $\phi^h(\alpha)$ would be in the situation of
Lemma~\ref{paths-doubled/thm}.

Then, we can conclude in a straightforward way that $\cup_i\,\phi^i(\alpha)$ has the
stated form, with $s < n$ uniquely determined by $v' = \phi^s(v)$, $\ell$ the smallest
positive integer such that $\phi^{\ell s}(v) = v$, and $t = \ell\, s/n$ (this is an
integer since $\ord_\phi(v) = n$ and it is coprime with $\ell$ by the minimality of
$\ell\,$).

\begin{Figure}[htb]{cycle-pf/fig}
\fig{}{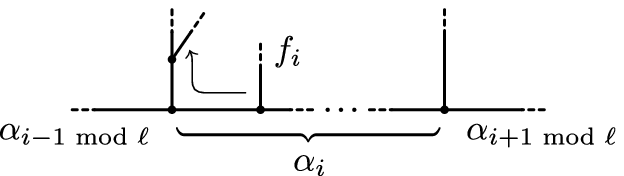}[-3pt]
{}{Reducing $\len(\alpha)$ in Lemma \ref{cycle-p/thm}.}
\end{Figure}

At this point, we are left to prove that the path $\alpha$ can be replaced by a single
edge up to $F$-equivalence. Proceeding by induction, it suffices to show how to reduce
$\len(\alpha)$ whenever this is greater than 1. Such reduction can be achieved by sliding
all the copies $f_i = \phi^{is}(f)$ of the unique edge $f$ attached at the first
intermediate vertex of $\alpha$ but not belonging to $\alpha$, so that their ends are
slided out of the paths $\alpha_i = \phi^{is}(\alpha)$, as indicated in Figure
\ref{cycle-pf/fig}. According to Remark \ref{slidings/rem}, those slidings correspond to a
$\phi$-invariant $F$-move, given by simultaneous elementary $F$-moves performed on the
first edges of all the $\alpha_i$'s, which are pairwise disjoint if $\len(\alpha) \geq 2$.
\end{proof}

\begin{Figure}[b]{onion/fig}
\fig{}{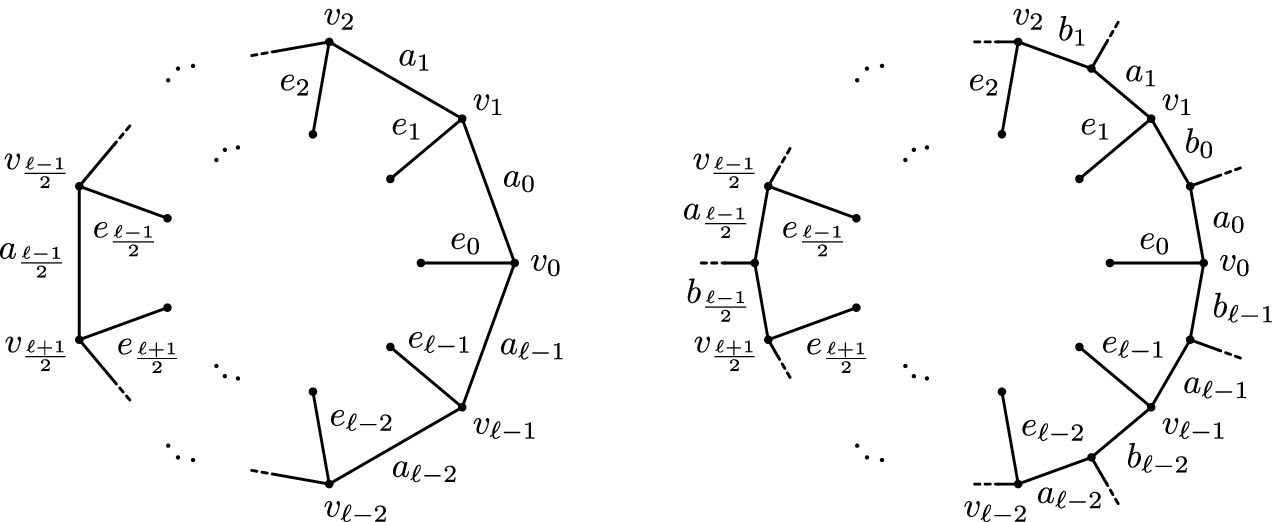}[-3pt]
{}{The two possible forms of the cycle $C$ in Lemma~\ref{step-p/thm}.}
\end{Figure}

\begin{lemma}\label{step-p/thm}
Let $\phi: \Gamma \to \Gamma$ be an automorphism of a (possibly disconnected)
uni/trivalent graph $\Gamma$, whose order $n = \ord(\phi)$ is a multiple of neither $2$
nor $3$.\break If $\alpha \subset \Gamma$ is a (simple) path of minimal length among all
paths joining any two distinct (hence disjoint, see proof) terminal edges in a given
$\phi$-orbit, then up to $F$-equivalence we can assume $\cup_i\,\phi^i(\alpha)$ to be
a disjoint union $C \sqcup \phi(C) \sqcup \dots \sqcup \phi^{n/\ell - 1}(C)$ of
$n/\ell$ cycles (possibly a single one, for $\ell = n$), with $\ell$ a divisor of $n$
greater than $1$ and $C = \alpha \,\phi^s(\alpha) \cdots \phi^{(\ell - 1) s}(\alpha)$ for
some positive integer $s = t\, n/\ell < n$ with $(t,\ell) = 1$, like in Lemma
\ref{cycle-p/thm}. Moreover, $\alpha$ can be assumed to
consist of either one edge $a$ or two edges $a$ and $b$ (see Figure \ref{onion/fig}, where
$a_i$ and $b_i$ stand for $\phi^{is}(a)$ and $\phi^{is}(b)$ respectively, and a similar
notation is adopted for the terminal edges $e_i$'s and the vertices $v_i$'s as well).
\end{lemma}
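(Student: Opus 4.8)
The plan is to follow the proof of Lemma~\ref{cycle-p/thm} as closely as possible, the only new features being that the endpoints of $\alpha$ now carry terminal edges and that the minimality of $\alpha$ is asserted relative to the chosen $\phi$-orbit of terminal edges rather than globally. First I would dispose of the parenthetical disjointness claim. If $e$ is a terminal edge of the orbit, with free end $u$ and trivalent end $v$, then the terminal edges of the orbit are exactly $e_i=\phi^i(e)$, with free ends $\phi^i(u)$ and trivalent ends $v_i=\phi^i(v)$. Since $n=\ord(\phi)$ is a multiple of neither $2$ nor $3$, property~1 gives $\ord_\phi(v)=n$, so the $n$ vertices $v_i$ are pairwise distinct; the free ends are distinct univalent vertices as well, whence two distinct terminal edges of the orbit share neither vertex and are disjoint. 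In particular the $v_i$ form a single $\phi$-orbit of size $n$, and a path joining two distinct terminal edges is the same thing as a path joining two distinct vertices of this orbit.

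Next I would extract the cycle structure. Taking the orbit to be $\{v_i\}$, with $v'=\phi^s(v)$ the other endpoint of $\alpha$, the path $\alpha$ is minimal among paths joining two vertices of this orbit, so Lemmas~\ref{paths-disjoint/thm}--\ref{paths-doubled/thm} apply to the images $\phi^i(\alpha)$. By property~2 the configurations of Lemmas~\ref{paths-diagonal/thm} and~\ref{paths-doubled/thm} are excluded, so two distinct images are either disjoint or overlap as in Lemma~\ref{paths-adjacent/thm}. Running the argument of Lemma~\ref{cycle-p/thm} then yields that $\cup_i\,\phi^i(\alpha)$ is a disjoint union $C\sqcup\phi(C)\sqcup\dots\sqcup\phi^{n/\ell-1}(C)$ of $n/\ell$ cycles, with $C=\alpha\,\phi^s(\alpha)\cdots\phi^{(\ell-1)s}(\alpha)$ and $\ell\mid n$, $\ell>1$, $s=t\,n/\ell<n$, $(t,\ell)=1$, exactly as in that lemma. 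The one point needing care is the step in which Lemma~\ref{cycle-p/thm} invokes \emph{global} minimality to force the branch point of an adjacent overlap to be an endpoint: here minimality is only relative to the orbit $\{v_i\}$, so a priori the images could branch at an interior vertex $u$ of $\alpha$, producing a short path between the $\phi$-equivalent vertices $\phi^i(u)\cong_\phi\phi^j(u)$ that lies outside the orbit and is therefore not excluded. I would absorb this possibility into the reduction step below.

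Finally I would reduce $\alpha$ by $\phi$-invariant $F$-moves. As in Lemma~\ref{cycle-p/thm}, whenever $\len(\alpha)\ge 2$ I would slide the third edge $f$ at the first interior vertex of $\alpha$ simultaneously over all the disjoint images $\alpha_i=\phi^{is}(\alpha)$; by Remark~\ref{slidings/rem} this is a single $\phi$-invariant $F$-move that decreases $\len(\alpha)$, and I expect it to also remove any interior branch point that the weaker minimality might have allowed. The hard part will be to analyse when this reduction cannot be pushed all the way down to a single edge: I expect the obstruction to be precisely the terminal edge $e$ sitting at the endpoint $v$, which prevents the last slide (from length $2$ to length $1$) from being realized as a valid $\phi$-invariant $F$-move keeping the result in $\G_{g,b}$. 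Carrying out this case analysis, I anticipate exactly two stable configurations, namely $\alpha$ reduced to a single edge $a$ or to a pair of edges $a,b$, which are the two onion-shaped cycles of Figure~\ref{onion/fig} with the terminal edges $e_i$ attached at the vertices $v_i$. Verifying that both reduced unions are still disjoint unions of cycles of the stated form, and that no further $\phi$-invariant reduction is possible, would complete the argument.
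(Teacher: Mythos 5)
Your opening is sound and matches the paper: the disjointness of distinct terminal edges in the orbit (via $\ord_\phi(v)=\ord_\phi(e)=n$), the applicability of Lemmas~\ref{paths-disjoint/thm}--\ref{paths-doubled/thm}, the exclusion of the diagonal and doubled configurations, and, importantly, you correctly spot the weak point, namely that only minimality relative to the orbit is available, so adjacent overlaps branching at an interior vertex of $\alpha$ are no longer excluded. But the proposal stops exactly where the paper's proof begins: everything you defer with ``I would absorb'', ``I expect'' and ``I anticipate'' is the actual content of the argument, and the mechanism you propose for it would fail. Two points. First, the equality sub-case $\len(\delta_{i,1})=\len(\delta_{j,k})=\len(\alpha)/2$ of Lemma~\ref{paths-adjacent/thm}, in which the branch vertex is fixed by $\phi^{j-i}$, is never addressed by you and cannot be ``removed by sliding'': it is a genuine tripod configuration, which for $n$ a multiple of $3$ survives as a distinct stable outcome (point~2 of Lemma~\ref{step-3/thm}), so any argument that handles it by orbit-blind reduction would prove a false statement. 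It must be excluded up front; the paper does so by observing that $\phi^{j-i}$ would then cyclically permute the three edges exiting from the fixed vertex, contradicting the hypothesis that $n$ is not a multiple of $3$ (alternatively, the fixed vertex would have order less than $n$, against your property~1 --- but one of these checks has to be made, and you make neither).

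Second, in the remaining range $\len(\alpha)/2<\len(\delta_{i,1})<\len(\alpha)$ the images really do share edges: the initial segment of $\alpha_i=\phi^{is}(\alpha)$ runs back along the final segment of $\alpha_{i-1}$, producing the layered ``onion'' configuration of Figure~\ref{onion-pf1/fig}. So your intermediate claim that the cycle decomposition holds ``exactly as in'' Lemma~\ref{cycle-p/thm} before any $F$-moves is wrong (the $F$-equivalence in the statement is needed already to achieve the cycle form, not only to shorten $\alpha$), and the slide you borrow from Lemma~\ref{cycle-p/thm} is not available: the first edges of the $\alpha_i$'s are no longer pairwise disjoint, and the ``third edge at the first interior vertex'' may itself be an edge of the neighbouring image, so the simultaneous elementary $F$-moves have overlapping supports and do not assemble into a $\phi$-invariant $F$-move. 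The paper needs a genuinely different, three-stage simplification here: $\phi$-invariant slidings of the edges attached to $C$ to collapse all arcs to single edges, fusing where necessary with the terminal edges (Figure~\ref{onion-pf2/fig}); a simultaneous edge-$F$-move eliminating the resulting nested bigons (Figure~\ref{onion-pf3/fig}); and only then the inductive length reduction, performed by sliding the edge attached at the \emph{second} intermediate vertex (Figure~\ref{onion-pf4/fig}), which terminates at $\len(\alpha)=2$ precisely because the last slide would have to pass the terminal edge $e$ --- this is why the two-edge outcome of Figure~\ref{onion/fig} cannot be improved, a point you only conjecture. As it stands, the proposal is a correct reduction of the problem plus an outline, with the decisive $F$-move analysis missing and its suggested replacement unworkable.
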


\begin{proof}
Let $\alpha$ be a path as in the statement, and $e \neq e'$ be the terminal edges it
joins. Then the ends of $\alpha$ coincide with the unique trivalent ends $v$ and $v'$ 
of $e$ and $e'$ respectively. Notice that $v \neq v'$, being $\ord_\phi(v) = \ord_\phi(e) 
= n$ by Lemma \ref{orders/thm}.

For any $i \neq j \!\!\mod n$, by arguing as in the proof of Lemma~\ref{cycle-p/thm}, we
can prove that $\phi^i(\alpha)$ and $\phi^j(\alpha)$ can meet only as in
Lemma~\ref{paths-adjacent/thm}, and that their common end $\phi^j(v) = \phi^i(v')$ cannot
be shared by any other $\phi^h(\alpha)$. The same Lemma \ref{paths-adjacent/thm} also
tells us that $\len(\delta_{i,1}) = \len(\delta_{j,k}) \geq \len(\alpha)/2$ (cf.
Figure~\ref{paths-adjacent/fig}). But here the equality cannot occur, otherwise
$\phi^{j-i}$ would fix the common end of $\delta_{i,1}$ and $\delta_{j,k}$, and it would
cyclically permute the three edges exiting from it, in contrast with the assumption that
$n$ is not a multiple of 3.

Then, we can conclude straightforwardly that $\cup_i\,\phi^i(\alpha)$ consists of the
disjoint union $C \sqcup \phi(C) \sqcup \dots \sqcup \phi^{n/\ell - 1}(C)$, with $s < n$ 
uniquely determined by $\phi^s(v) = v'$, $\ell$ the smallest positive integer such that
$\phi^{\ell s}(v) = v$, $t = s\, \ell/n$ (cf. proof of Lemma \ref{cycle-p/thm}), and $C =
\alpha \,\phi^s(\alpha) \cdots \phi^{(\ell - 1) s}(\alpha)$ having the form depicted in
Figure \ref{onion-pf1/fig}. Here, and in the next figure as well, $\alpha_i$ stands for
the path $\phi^{is}(\alpha)$, which is the part of $C$ in the corresponding circular
sector, and each arc represents a path of edges.

\begin{Figure}[htb]{onion-pf1/fig}
\vskip6pt
\fig{}{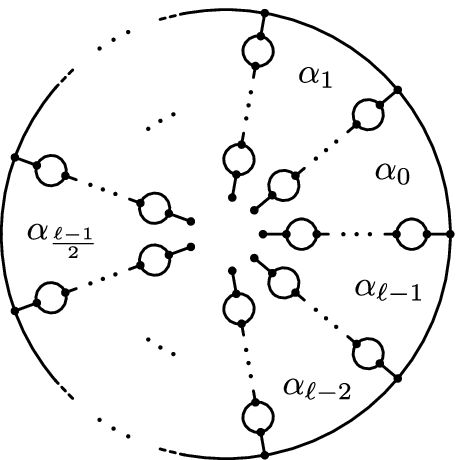}
{}{The starting configuration $C$ in the proof of Lemma \ref{step-p/thm}.}
\end{Figure}

In the very special case where the paths $\alpha_i$ consist of single edges, they can only
share their common ends. Hence, the configuration $C$ in Figure \ref{onion-pf1/fig}
coincides with that in the left side of Figure \ref{onion/fig}, and we are done.

Otherwise, by a sequence of $\phi$-invariant slidings (cf. Remark \ref{slidings/rem}) on
the edges attached to $C$ and to its copies, as indicated in Figure \ref{onion-pf2/fig}
for three of such edges, we can reduce all the arcs in Figure \ref{onion-pf1/fig} to
single edges, possibly except the ones forming the big circle. In the same way, when the
first edge of $\alpha_i$ coincides with the last edge of $\alpha_{i-1 \!\!\mod \ell}$, we
also fuse it with the edge $e_i = \phi^{is}(e)$.

\begin{Figure}[htb]{onion-pf2/fig}
\fig{}{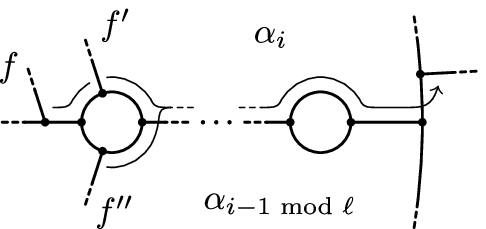}[-3pt]
{}{Simplifying the starting configuration.}
\end{Figure}

Then, we can eliminate all the resulting bigons, by a $\phi$-invariant $F$-move, which
atcs on $C$ by simultaneous elementary $F$-moves on the edges between any two consecutive
of them and between the most external ones and the big circle, as shown in Figure
\ref{onion-pf3/fig}.

\begin{Figure}[t]{onion-pf3/fig}
\fig{}{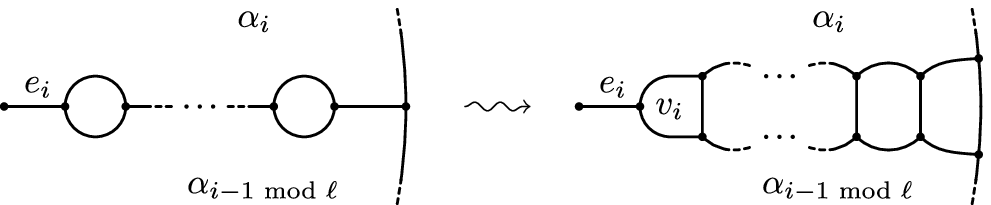}[-3pt]
{}{Eliminating the bigons by simultaneous edge $F$-moves.}
\end{Figure}

\begin{Figure}[htb]{onion-pf4/fig}
\fig{}{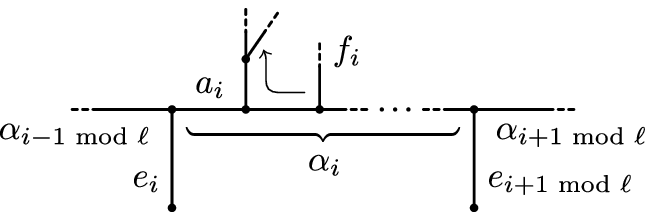}
{}{Reducing $\len(\alpha)$ in Lemma \ref{step-p/thm}.}
\end{Figure}

Finally, to get the configuration in the right side of Figure \ref{onion/fig}, it remains
to reduce $\alpha$ to a chain of only two edges $a$ and $b$. This can be done proceeding 
by induction, like in the last part of the proof of Lemma \ref{cycle-p/thm}. In Figure 
\ref{onion-pf4/fig} we see how to reduce $\len(\alpha)$ whenever this is greater than 2.
Here, $a_i = \phi^{is}(a)$ and $f_i = \phi^{is}(f)$ are respectively the images in 
$\alpha_i = \phi^{is}(\alpha)$ of the first edge $a$ of $\alpha$ and of the edge $f$ 
attached at the second intermediate vertex of $\alpha$ but not belonging to $\alpha$.
The desired reduction is achieved through an invariant $F$-move, which simultaneously 
slides all the $f_i$, in such a way that their ends pass from the $\alpha_i$'s to the 
first edges attached to them. Notice that, contrary to what we did in the proof of Lemma 
\ref{cycle-p/thm}, we cannot further reduce $\alpha$ to a single edge by a slide letting
even the last intermediate point of $\alpha$ pass to $e$.
\end{proof}

\begin{proposition}\label{onion-p/thm}
Let $\phi \in \A_{g,b}$ be a non-trivial automorphism, whose order $n = \ord(\phi) > 1$ is
a multiple of neither $2$ nor $3$. Then, $\phi$ is $F$-equivalent to the composition $\tau
\circ \sigma$ of two automorphisms $\sigma, \tau \in \A_{g,b}$ such that $\ord(\sigma) =
\ord(\tau) = 2$. Moreover, both $\sigma$ and $\tau$ can be assumed to fix an edge and
reverse an invariant edge.\hfill
\end{proposition}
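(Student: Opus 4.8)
The plan is to realize $\phi$ as a product of two reflections, exactly as a rotation is written in a dihedral group. Since $F$-equivalence is an equivalence relation (Definition~\ref{Fmove-inv/def}) and preserves the order of an automorphism (Lemma~\ref{ordinv/thm}), we are free to replace the pair $(\Gamma,\phi)$ by any $F$-equivalent one. Hence it suffices to put $\Gamma$ into a convenient normal form and then exhibit a single involution $\sigma\in\A(\Gamma)$ with $\sigma\phi\sigma=\phi^{-1}$: setting $\tau=\phi\sigma$ we get $\tau\sigma=\phi\sigma^2=\phi$ and $\tau^2=\phi(\sigma\phi\sigma)=\phi\phi^{-1}=1$, so $\ord(\sigma)=\ord(\tau)=2$ and $\phi$ is $F$-equivalent to $\tau\circ\sigma$ as required. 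The whole problem thus reduces to building one ``reflection'' $\sigma$ that conjugates $\phi$ to $\phi^{-1}$.

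By Lemma~\ref{orders/thm}, the hypothesis that $n=\ord(\phi)$ is divisible by neither $2$ nor $3$ gives $\ord_\phi(v)=\ord_\phi(e)=n$ for every vertex $v$ and every edge $e$, so $\phi$ acts freely and every orbit has exactly $n$ elements. I would then normalize $\Gamma$ by $\phi$-invariant $F$-moves: applying Lemma~\ref{cycle-p/thm} to a minimal path between two $\phi$-equivalent vertices and Lemma~\ref{step-p/thm} to a minimal path between two $\phi$-equivalent terminal edges, the graph is reduced, up to $F$-equivalence, to the concentric ``onion'' configuration of Figure~\ref{onion/fig}. There $\Gamma$ is a union of $\phi$-invariant cycles, each a concatenation $\alpha\,\phi^s(\alpha)\cdots\phi^{(\ell-1)s}(\alpha)$ carrying a rotation of odd order $\ell\mid n$, joined by radial spokes and terminal edges in a $\phi$-symmetric pattern.

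On this normal form I would define $\sigma$ as the reflection reversing the rotation on all cycles at once. On a single $\phi$-orbit $\{x,\phi x,\dots,\phi^{n-1}x\}$ the required relation $\sigma\phi\sigma=\phi^{-1}$ forces $\sigma(\phi^k x)=\phi^{\,c-k}\sigma(x)$ for some constant $c$, and because $n$ is odd this is a reflection of a cyclic set of odd order, hence has a unique fixed element. Choosing the constants $c$ coherently for the various orbits, these fixed elements line up along a common axis of the onion that meets one cycle in a single fixed vertex and in the antipodal edge of that cycle. Then $\sigma$ fixes that vertex together with the spoke issuing from it, which is the promised fixed edge, and swaps the two endpoints of the antipodal edge, which is the promised reversed invariant edge; the same holds for $\tau=\phi\sigma$, the other reflection of the dihedral group $\langle\phi,\sigma\rangle$. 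What remains is to check that $\sigma$ preserves incidences and trivalency, that $\sigma^2=1$, and that $\sigma\phi\sigma=\phi^{-1}$ holds globally and not just orbit by orbit.

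The crux is precisely this global coherence. One must select the axis and the shift constants $c$ simultaneously for every orbit of cycles, spokes and terminal edges, so that a single map $\sigma$ is at once a graph automorphism of the normal form and conjugates $\phi$ to $\phi^{-1}$ everywhere; in particular the axis must pass consistently through the successive layers of the onion, so that the fixed vertices on different cycles are joined by genuinely fixed spokes. This is exactly where the reduction to single (or paired) edges furnished by Lemmas~\ref{cycle-p/thm} and \ref{step-p/thm} does the real work, and where I expect the bookkeeping to be most delicate. A secondary verification is that every $F$-move used in the normalization is genuinely $\phi$-invariant in the sense of Definition~\ref{Fmove-inv/def}, so that the order-$2$ factorization produced on the normal form transports back, along the $F$-equivalence chain, to the original $\phi$.
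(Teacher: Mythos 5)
Your dihedral reduction is sound, and it is in fact what the paper does implicitly: setting $\tau=\phi\circ\sigma$ for an involution $\sigma$ with $\sigma\phi\sigma=\phi^{-1}$, and defining $\sigma$ orbit-wise by $\sigma(\phi^j(x))=\phi^{-j}(x)$ and $\tau(\phi^j(x))=\phi^{1-j}(x)$, is exactly the form of the involutions $\sigma_i,\tau_i$ appearing in the paper's proof of Proposition~\ref{onion-p/thm}. But the point you yourself flag as the crux --- the global coherence of the axis across layers --- is left as a hope in your write-up, and it is precisely the content of the paper's argument, so as it stands this is a genuine gap. There are two concrete obstructions. First, there is no one-shot normal form: Lemma~\ref{step-p/thm} applies only to a minimal path between terminal edges of the \emph{complement} $\Delta_{i-1}=\Cl(\Gamma_{i-1}-\Lambda_{i-1})$ of what has already been built, so the $F$-moves needed to normalize layer $i$ only become available after layers $1,\dots,i-1$ have been normalized and peeled off. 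The paper therefore interleaves normalization and construction in a recursion, checking at each step that the $F$-equivalence involves only internal edges of $\Delta_{i-1}$, hence extends to all of $\Gamma_{i-1}$ while leaving $\Lambda_{i-1}$ and the already-defined $\sigma_{i-1},\tau_{i-1}$ untouched; you cannot first flatten all of $\Gamma$ into a single onion picture and only then define the reflection.

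Second, the coherent choice of your shift constants $c$ is not deferred bookkeeping; it is arranged by an anchoring device your proposal lacks. Since $n$ is odd, $\sigma_{i-1}$ fixes a \emph{unique} vertex $u_{i-1}$ in the $\phi$-orbit $U_{i-1}$ of free ends where the next layer attaches (this is your ``unique fixed element of a reflection of an odd cyclic set'', put to work), and the paper indexes the new layer from the terminal edge $e$ ending at $u_{i-1}$: with $v$ the trivalent end of $e$, it sets $\sigma_i(\phi_i^j(v))=\phi_i^{-j}(v)$ and $\tau_i(\phi_i^j(v))=\phi_i^{1-j}(v)$. This choice forces $\sigma_i|_{\Lambda_{i-1}}=\sigma_{i-1}$ and turns adjacency preservation into a congruence check modulo $n$, since adjacency along the new cycle means index difference $\pm s$. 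Your onion picture also misses the terminating case in which $\Delta_{i-1}$ contains no path between distinct terminal edges at $U_{i-1}$: then it splits into $n$ components permuted cyclically by $\phi$, and one must define $\sigma_i$ and $\tau_i$ on $\phi^{j}(C)$ as $\phi^{-2j}$ and $\phi^{1-2j}$ respectively, which is well defined only because $n$ is odd. Finally, the ``moreover'' clause is obtained from explicitly tracked edges ($\sigma_i$ fixes $e$ and reverses $\phi_i^{(n+1)/2}(a)$, while $\tau_i$ fixes $\phi_i^{(n+1)/2}(e)$ and reverses $a$), which your antipodal-edge heuristic gestures at but does not pin down.
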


\begin{proof}
Let $\phi: \Gamma \to \Gamma$ any automorphism in $\A_{n,g}$ as in the statement. We will
construct by recursion a sequence $\phi_i: \Gamma_i \to \Gamma_i$ of automorphisms in
$\A_{g,b}$ and a sequence of subgraphs $\Lambda_i \subset \Gamma_i$ with $i = 0, \dots, r$
and $r \geq 2$, having the following properties:
\begin{itemize}
\item[1)] $\phi_0 = \phi$ (hence $\Gamma_0 = \Gamma$), and $\phi_i$ is $F$-equivalent to 
$\phi_{i-1}$ for every $1 \leq i \leq r$;
\item[2)] $\emptyset = \Lambda_0 \varsubsetneq \Lambda_1 \varsubsetneq \dots \varsubsetneq 
\Lambda_r = \Gamma_r$, and $\Delta_i = \Cl(\Gamma_i - \Lambda_i)$ is a uni/trivalent graph 
whose intersection with $\Lambda_i$ is a single $\phi_i$-orbit $U_i \subset \Delta_i$ of 
its free ends for every $1 \leq i \leq r-1$;
\item[3)] $\Lambda_i$ is $\phi_i$-invariant and $\phi_{i|\Lambda_{i-1}} = 
\phi_{i-1|\Lambda_{i-1}}$ for every $1 \leq i \leq r$;
\item[4)] $\phi_{i|\Lambda_i}: \Lambda_i \to \Lambda_i$ is the composition $\tau_i \circ
\sigma_i$ of two automorphisms $\sigma_i: \Lambda_i \to \Lambda_i$ and $\tau_i: \Lambda_i 
\to \Lambda_i$ such that $\ord(\sigma_i) = \ord(\tau_i) = 2$, for every $1 \leq i \leq r$;
moreover, $\sigma_i$ fixes exactly one vertex $u_i$ in $U_i$ (hence $\tau_i$ fixes
$\phi_i^{(n+1)/2}(u_i) \in U_i$) for every $1 \leq i \leq r-1$, and both $\sigma_i$ and
$\tau_i$ fix an edge and reverse an invariant edge for every $2 \leq i \leq r$.
\end{itemize}

This will prove the proposition, since $\sigma = \sigma_r$ and $\tau = \tau_r$ satisfy 
all the required conditions, being $\tau \circ \sigma = \tau_r 
\circ \sigma_r = \phi_{r|\Lambda_r} = \phi_r$ $F$-equivalent to $\phi$ in $\A_{g,b}$.

To start the construction, we apply Lemma \ref{cycle-p/thm} to the automorphism $\phi$
and a minimal path $\alpha \subset \Gamma$ as in that statement (this exists since
$\Gamma$ is connected but not reduced to a single vertex and $\phi$ is non-trivial). As a
result, we get an automorphism $\phi_1: \Gamma_1 \to \Gamma_1$ in $\A_{g,b}$, which is
$F$-equivalent to $\phi_0 = \phi$ and such that the minimal path corresponding to $\alpha$
in $\Gamma_1$ consists of only one (internal) edge $a \subset \Gamma_1$. Then, we put
$\Lambda_1 = \cup_j\, \phi_1^j(a) \subset \Gamma_1$, which is $\phi_1$-invariant by
definition. Lemma \ref{cycle-p/thm} tells us that there exist a divisor $\ell > 1$ of $n$
and an integer $0 < t < l$ with $(t,\ell) = 1$, such that $\Lambda_1$ consists of the
disjoint union $C \sqcup \phi_1(C) \sqcup \dots \sqcup \phi_1^{n/\ell - 1}(C)$ of $n/\ell$
simple cycles (possibly a single one, for $\ell = n$), where $C = a \,\phi_1^s(a) \cdots
\phi_1^{(\ell - 1) s}(a)$\break with $s = t\, n/\ell$.

Let $v$ be the first end of the edge $a$, so that $\phi_1^s(v)$ is the second one. Then,
the set of vertices of $\Lambda_1$ is the $\phi_1$-orbit $V = \{v, \phi_1(v), \dots,
\phi_1^{n-1}(v)\}$, and any automorphism on $\Lambda_1$ is uniquely determined by its
action on $V$, since $n$ is odd and hence $\ell > 2$. Therefore, we can first define the
two involutions $\sigma_1$ and $\tau_1$ on $V$ in such a way that $\tau_1 \circ \sigma_1$
coincides with the restriction of $\phi_1$ to $V$, then check that they preserve
adjacency of vertices in $\Lambda_1$ and hence give well-defined automorphisms of 
$\Lambda_1$.

We define $\sigma_1$ and $\tau_1$ on $V$ by putting $\sigma_1(\phi_1^j(v)) =
\phi_1^{-j}(v)$ and $\tau_1(\phi_1^j(v)) = \phi_1^{1-j}(v)$, for $j = 0, \dots, n-1$.
Clearly, these are involutions and their composition $\tau_1 \circ \sigma_1$ acts on $V$
as $\phi_1$. Concerning the preservation of adjacency, it suffices to observe that two
elements $\phi_1^j(v)$ and $\phi_1^k(v)$ of $V$ are adjacent vertices in $\Lambda_1$ if
and only if $|j - k| = s \!\!\mod n$, and both $\sigma_1$ and $\tau_1$ preserve this
condition.

Finally, we note that $\Delta_1 = \Cl(\Gamma_1 - \Lambda_1)$ is a uni/trivalent graph and
$\Delta_1 \cap \Lambda_1$ is given by the $\phi_1$-orbit $U_1 = V$ of free ends of
$\Delta_1$, as required in point~2 above. Moreover, $\sigma_1$ fixes only the vertex $u_1 
= v$ of $U_1$ and reverses the edge $\phi_1^{(n+1)/2}(a)$, while $\tau_1$ reverses the 
edge $a$, as required in point~4 above.

Now, to realize the recursive step of the construction, assume we are given $\phi_{i-1}:
\Gamma_{i-1} \to \Gamma_{i-1}$, $u_{i-1} \in U_{i-1} \subset \Lambda_{i-1} \varsubsetneq
\Gamma_{i-1}$ and $\sigma_{i-1},\tau_{i-1}: \Lambda_{i-1} \to \Lambda_{i-1}$ satisfying
all the properties stated in the above points~1 to 4 for $i-1 < r$.\break Lemmas
\ref{ordinv/thm} and \ref{orders/thm} ensure that $\ord(\phi_{i-1}) = n$ and that the
cardinality of the $\phi_{i-1}$-orbit $U_{i-1}$ is equal to $n$ as well. Hence, also the
restriction of $\phi_{i-1}$ to the uni/tri\-valent graph $\Delta_{i-1} = \Cl(\Gamma_{i-1}
- \Lambda_{i-1})$ is an automorphism of the same order $n$. Furthermore, the terminal
edges of $\Delta_{i-1}$ ending at vertices in $U_{i-1}$ are all distinct. In fact, if two
of them would coincide, then their $\phi$-order would be even, which contradicts the
hypothesis that $n$ is odd.

If $\Delta_{i-1}$ does not contain any path joining different terminal edges ending at
vertices in $U_{i-1}$, then it consists of $n$ components each containing a single vertex
in $U_{i-1}$. Denoting by $C$ the component of $\Delta_{i-1}$ containing that vertex
$u_{i-1}$, we have the component decomposition $\Delta_{i-1} = C \sqcup \phi_{i-1}(C)
\sqcup \dots \sqcup \phi_{i-1}^{n-1}(C)$, with $\phi_{i-1}$ cyclically permuting the
components. In this case, we put $\phi_i = \phi_{i-1}$ and $\Lambda_i = \Gamma_i =
\Gamma_{i-1}$. Moverover, we define $\sigma_i$ and $\tau_i$ as the unique automorphisms of
$\Gamma_i$ extending $\sigma_{i-1}$ and $\tau_{i-1}$, in such a way that they act on each
$\phi_{i-1}^j(C)$ with $j = 0, \dots, n-1$, as $\phi_{i-1}^{-2j}$ and $\phi_{i-1}^{1-2j}$
respectively. A straightforward verification shows that such $\phi_i$, $\Lambda_i$,
$\sigma_i$ and $\tau_i$ satisfy all the properties stated in the above points~1 to 4 for
the case when $i = r$, which means that this step terminates the recursion.

Otherwise, if a path $\alpha \subset \Delta_{i-1}$ exists joining different terminal edges
of $\Delta_{i-1}$ ending at vertices in $U_{i-1}$, then we can choose it to be minimal
(hence simple) and apply Lemma \ref{step-p/thm} to the restriction of $\phi_{i-1}$ to
$\Delta_{i-1}$ and to such a minimal $\alpha$, in order to get the structure described in
that statement for $\cup_j\, \phi_{i-1}^j(\alpha) \subset \Delta_{i-1}$ up to
$F$-equivalence. Such an $F$-equivalence only involves internal edges of $\Delta_{i-1}$,
hence it does not change the set of free ends $U_{i-1} \subset \Delta_{i-1}$ and the
restriction of $\phi_{i-1|\Delta_{i-1}}$. Therefore, it can be extended to an
$F$-equivalence of the whole $\phi_{i-1}$ on $\Gamma_{i-1}$, which leaves $\Lambda_{i-1}$ 
and the restriction $\phi_{i-1|\Lambda_{i-1}}$ unchanged.

As a result we get a new uni/trivalent graph $\Gamma_i$, such that $\Lambda_{i-1} \subset
\Gamma_i$ and a new automorphism $\phi_i: \Gamma_i \to \Gamma_i$, which is $F$-equivalent
to $\phi_{i-1}$ and coincides with $\phi_{i-1}$ on $\Lambda_{i-1}$. We also get a new
minimal path $\alpha \subset \Cl(\Gamma_i - \Lambda_{i-1})$, which join different
$\phi_i$-equivalent terminal edges of $\Cl(\Gamma_i - \Lambda_{i-1})$ ending at vertices
in $U_{i-1}$, such that $\cup_j\, \phi_i^j(\alpha)$ itself (no more up to $F$-equivalence)
has the structure described in Lem\-ma~\ref{step-p/thm}. Let $e \subset \Gamma_i$ be the
terminal edge of $\Cl(\Gamma_i - \Lambda_{i-1})$ ending at $u_{i-1}$, and $v \in
\Gamma_i$ be the other end of $e$, which is a trivalent vertex of $\Cl(\Gamma_i -
\Lambda_{i-1})$. Up to $\phi_i$-equiv\-alence, we can assume that $\alpha \subset
\Gamma_i$ starts at $v$ and ends at $\phi_i^s(v)$ with $0 < s < n$. According to Lemma
\ref{step-p/thm}, we can also assume that $\alpha$ consists of either one edge $a$ or two
edges $a$ and $b$ sharing the vertex $u$, while $\cup_j\, \phi_i^j(\alpha)$ consists of
the disjoint union $C \sqcup \phi_i(C) \sqcup \dots \sqcup \phi_i^{n/\ell - 1}(C)$ of
$n/\ell$ cycles (possibly a single one, for $\ell = n$), where $C = \alpha
\,\phi_i^s(\alpha) \cdots \phi_i^{(\ell - 1) s}(\alpha)$ for some $\ell > 1$ such that $s
\ell = t n$ with $(t,\ell) = 1$.

We put $\Lambda_i = \Lambda_{i-1} \cup_j\, \phi_i^j(e) \cup_j\, \phi_i^j(\alpha)$ and
define $\sigma_i$ and $\tau_i$ to be the unique automorphisms of $\Lambda_i$ extending
$\sigma_{i-1}$ and $\tau_{i-1}$ respectively. To see that such extensions exist, we first
define $\sigma_i(\phi_i^j)(v) = \phi_i^{-j}(v)$ and $\tau_i(\phi_i^j)(v) =
\phi_i^{1-j}(v)$, and also $\sigma_i(\phi_i^j)(u) = \phi_i^{-j}(u)$ and
$\tau_i(\phi_i^j)(u) = \phi_i^{1-j}(u)$ in the case when $\alpha = ab$, for every $j =
0, \dots, n-1$. Then, essentially by the same argument used above for $i = 1$, we check
that these definitions, together with $\sigma_i(\phi_i^j)(u_{i-1}) = \phi_i^{-j}(u_{i-1})$
and $\tau_i(\phi_i^j)(u_{i-1}) = \phi_i^{1-j}(u_{i-1})$ for every $i=0, \dots, n$,
preserve adjacency of vertices in $\Lambda_i$, hence they determine automorphisms
$\sigma_i$ and $\tau_i$ of the graph $\Lambda_i$ extending $\sigma_{i-1}$ and $\tau_{i-1}$
respectively. Notice that $\tau_i \circ \sigma_i$ trivially coincides with
$\phi_{i|\Lambda_i}$. Moreover, $\sigma_i$ fixes the edge $e$ and reverses the edge
$\phi_i^{(n+1)/2}(a)$, while $\tau_i$ fixes the edge $\phi_i^{(n+1)/2}(e)$ and reverses
the edge $a$. Finally, $u_i = \phi_i^{(n+1)/2}(u)$ is the unique vertex in its
$\phi_i$-orbit $U_i = \{\phi_i^j(u)\,|\,j = 0, \dots, n\}$ fixed by $\sigma_i$.

At this point, if $\alpha$ consists of the only edge $a$, hence $C$ has the structure
depicted in the left side of Figure \ref{onion/fig}, then $\Lambda_i = \Gamma_i$ by the
connectedness of $\Gamma_i$ and the recursion terminates with $r = i$. Otherwise, if
$\alpha$ consists of the two edges $a$ and $b$, hence $C$ has the structure depicted in
the right side of Figure \ref{onion/fig}, then we put $\Delta_i = \Cl(\Gamma_i -
\Lambda_i)$ and observe that this is a uni/trivalent graph such that $\Delta_i \cap
\Lambda_i = U_i$. This conclude the recursive step.
\end{proof}

%\newpage

\section{The case of order $3^m$%
\label{order-3m/sec}}

In this section, we want to prove that any automorphism $\phi \in \A_{g,b}$ with order 
$\ord(\phi) = 3^m$ is $F$-equivalent to the composition of two automorphisms of order $2$.

Like in the prevous section, a weaker assumption will suffice to get the same conclusion.
Namely, we will assume that $n = \ord(\phi)$ is an odd multiple of $3$. The only 
properties we will need, for any automorphism $\phi: \Gamma \to \Gamma$ of a (possibly 
discon\-nected) uni/trivalent graph $\Gamma$ with such an order $n$, are the following:
\begin{itemize}
\item[1)] $\ord_\phi(e) = n$ for any edge $e$ of $\Gamma$, while $\ord_\phi(v)$ is either
$n$ or $n/3$ for any vertex $v$ of $\Gamma$, thanks to Lemma~\ref{orders/thm};
\item[2)] the situation of Lemmas~\ref{paths-diagonal/thm} cannot occur, while that of
Lemma \ref{paths-doubled/thm} can only occur with the two paths $\phi^i(\alpha)$ and
$\phi^j(\alpha)$ sharing no edge.
\end{itemize}

The whole argument is essentially the same as in the previous section, except for some 
more cases occurring here. Hence, in both statements and proofs we will just concern with 
those extra cases, while referring for the others to the analogous statements and proofs 
in the previous section.

\begin{lemma}\label{cycle-3/thm}
Let $\phi: \Gamma \to \Gamma$ be an automorphism of a (possibly disconnected)
uni/trivalent graph $\Gamma$, whose order $n = \ord(\phi)$ is an odd multiple of $3$. If
$\alpha \subset \Gamma$ is a (simple) path of minimal length among all paths joining any
two distinct $\phi$-equivalent vertices, then $\cup_i\, \phi^i(\alpha)$ is one
of the following:
\begin{itemize}
\item[1)] a disjoint union $C \sqcup \phi(C) \sqcup \dots \sqcup \phi^{n/\ell - 1}(C)$
of $n/\ell$ simple cycles (possibly a single one, for $\ell = n$), each given by a
concatenation of images of $\alpha$, having the same structure as described in Lemma 
\ref{cycle-p/thm}; in this case, up to $F$-equivalence we can assume $\alpha$ to 
consist of a single edge $a$ (see Figure \ref{cycle/fig}, where $a_i$ stands for 
$\phi^{is}(a)$ and a similar notation is adopted for the vertices $v_i$ as well);
\item[2)] a disjoint union $T \sqcup \phi(T) \sqcup \dots \sqcup \phi^{n/3\,-\,1}(T)$ of
$n/3$ tripods (possibly a single one, for $n=3$), where $T = \alpha \cup
\phi^{n/3}(\alpha) \cup \phi^{2n/3}(\alpha)$ and $\alpha$ is the concatenation of two
edges $a$ and $b$, such that $b = \phi^{n/3}(\bar a)$, $\phi^{n/3}(b) = \phi^{2n/3}(\bar
a)$ and $\phi^{2n/3}(b) = \bar a$ (see Figure~\ref{tripode1/fig}, where $a_i$ stands for
$\phi^{i \mkern1mu n/3}(a)$ and a similar notation is adopted for the vertices $v_i$, 
which can be either all univalent as on the left side or all trivalent as on the right 
side).
\end{itemize}
\end{lemma}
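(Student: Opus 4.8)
The plan is to run the argument of Lemma~\ref{cycle-p/thm} essentially unchanged, while isolating the one new phenomenon an odd multiple of $3$ allows: an interior vertex of $\alpha$ of order $n/3$, fixed by $\phi^{n/3}$, at which three images of $\alpha$ meet and form a tripod. As before, Lemma~\ref{paths-diagonal/thm} is excluded because $n$ is odd, and, as recalled at the start of this section, the situation of Lemma~\ref{paths-doubled/thm} can occur only when $\phi^i(\alpha)$ and $\phi^j(\alpha)$ share no edge. So for $i\neq j$ the two images are either disjoint, meet as in Lemma~\ref{paths-adjacent/thm}, or (only if their pairs of ends coincide) meet edge-disjointly as in Lemma~\ref{paths-doubled/thm}.

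First I would prove that the ends $v,v'$ of $\alpha$ have order $n$. Suppose not, so $\ord_\phi(v)=\ord_\phi(v')=n/3$. Then $\phi^{n/3}$ fixes $v$ and $v'$ and cyclically permutes the three edges at each (it fixes no edge, every edge having order $n$), so $\alpha,\phi^{n/3}(\alpha),\phi^{2n/3}(\alpha)$ are three pairwise edge-disjoint paths from $v$ to $v'$ using up all edges at both ends. If $\len(\alpha)=1$ they are three parallel edges forming a standalone theta component; but an odd-order automorphism can neither swap its two vertices (else $2\mid n$) nor fix them (that contradicts $v\cong_\phi v'$), so $\len(\alpha)\ge 2$. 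The first edge $a$ then joins $v$ to a vertex $u$ with $\ord_\phi(u)=n$ (forced by $\ord_\phi(a)=n$), and $\bar a\,\phi^{n/3}(a)$ joins the distinct $\phi$-equivalent vertices $u$ and $\phi^{n/3}(u)$ in length $2$; minimality gives $\len(\alpha)=2$, say $\alpha=a\,a'$ with $v'=\phi^s(v)$, $0<s<n/3$. Now $\phi^{-s}(\alpha)$ ends at $v$, and its last edge $\phi^{-s}(a')$ cannot coincide with any of the three edges $\phi^{kn/3}(a)$ at $v$, since $\phi^{-s}(a')=\phi^{kn/3}(a)$ would make $\phi^{kn/3+s}$ (with $0<kn/3+s<n$) fix the order-$n$ vertex $u$. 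This exhibits a fourth edge at $v$, contradicting trivalence; hence $\ord_\phi(v)=n$.

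With ends of order $n$, Lemma~\ref{paths-doubled/thm} no longer applies for $i\neq j$, and the proof of Lemma~\ref{cycle-p/thm} carries over except at one step. In the adjacent case the common end is $w=\phi^i(v')=\phi^j(v)$, the shared segment has length $\ge\len(\alpha)/2$, and its far end is $\phi^i(u)$ for some $u\in\alpha$; the dichotomy ``$\phi^j(u)=\phi^j(v')$ or $\phi^j(u)=\phi^i(u)$'' now has a genuine second branch, because $\phi^j(u)=\phi^i(u)$ is possible when $\ord_\phi(u)=n/3$ and $j-i$ is a multiple of $n/3$. The first branch gives $u=v'$, reproduces the cycle analysis, and lets $\alpha$ be reduced to a single edge by the same $\phi$-invariant slidings -- this is alternative~1. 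The second branch produces an interior vertex $c:=u$ with $\phi^{n/3}(c)=c$, where the three images $\phi^{in/3}(\alpha)$ meet (exactly three, since $\phi^{-h}(c)\in\alpha$ forces $n/3\mid h$), cyclically permuted together with the three edges at $c$ by $\phi^{n/3}$; here $v'=\phi^{n/3}(v)$, so $s=n/3$.

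To finish the second branch: the shared segment identifies the half of $\alpha$ from $c$ to $v'$ with $\phi^{n/3}$ of the half from $v$ to $c$, so $c$ is the midpoint and the two halves have equal length. The same slidings reduce each half to one edge, giving $\alpha=a\,b$ through $c$ with $b=\phi^{n/3}(\bar a)$, whence also $\phi^{n/3}(b)=\phi^{2n/3}(\bar a)$ and $\phi^{2n/3}(b)=\bar a$; thus $T=\alpha\cup\phi^{n/3}(\alpha)\cup\phi^{2n/3}(\alpha)$ is the tripod with centre $c$ and leaves $v,\phi^{n/3}(v),\phi^{2n/3}(v)$. Since the orbit of $c$ has size $n/3$ and that of $v$ has size $n$, the tripods $T,\phi(T),\dots,\phi^{n/3-1}(T)$ have pairwise disjoint centres and leaves, and equivariance makes the three leaves simultaneously univalent or trivalent -- this is alternative~2. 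I expect the main obstacle to be this second branch, and symmetrically the order-$n$ reduction of Part~1: in the former one must confirm that an order-$n/3$ interior vertex forces precisely the tripod pattern and that the leg-reductions can be realised by simultaneous $\phi$-invariant $F$-moves fixing the branch point $c$; in the latter the ``fourth edge'' count must be run with care, ruling out the coincidence $a'=\phi^{kn/3+s}(a)$.
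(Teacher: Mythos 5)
Your proposal follows the paper's proof quite closely in overall architecture: the same case analysis via Lemmas~\ref{paths-disjoint/thm}--\ref{paths-doubled/thm}, the same dichotomy ``$\phi^j(u)=\phi^j(v')$ or $\phi^j(u)=\phi^i(u)$'' in the adjacent case, and the same two outcomes. Your first step (excluding ends of order $n/3$, hence the situation of Lemma~\ref{paths-doubled/thm}) is a correct variant of the paper's argument: both pin $\len(\alpha)=2$ via the length-$2$ path $\phi^i(\bar a)\,\phi^j(a)$ and then derive a contradiction, the paper by observing that the three neighbours of $v'$ are $u$, $\phi^{n/3}(u)$, $\phi^{2n/3}(u)$, forcing $s\equiv 0 \bmod n/3$, you by a fourth-edge count at $v$; your version checks out. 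Your direct derivation of the full tripod decomposition from a single occurrence of the second branch also implicitly takes care of the coherence point the paper makes explicitly (that the two branches cannot mix among non-disjoint pairs).

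The genuine defect is in your treatment of the leg lengths in the tripod branch. The paper observes that global minimality of $\alpha$ already forces $\len(\delta_{i,1})=\len(\delta_{j,k})=1$, so $\alpha$ is \emph{literally} the concatenation of two edges; accordingly, alternative~2 of the statement carries no ``up to $F$-equivalence'' qualifier (contrast alternative~1, and Lemma~\ref{step-3/thm}, where such qualifiers appear). You instead allow legs of length $L\geq 2$ and invoke ``the same slidings'' to shorten them. This (a)~proves only the weaker, $F$-equivalence version of alternative~2, and (b)~is itself doubtful as an $F$-move plan: elementary $F$-moves act only on internal edges, so when the leaves $v$, $\phi^{n/3}(v)$, $\phi^{2n/3}(v)$ are univalent (left side of Figure~\ref{tripode1/fig}) the outermost leg edges are terminal and cannot be moved, while moves at the three centre-adjacent edges of one tripod overlap at $c$ and cannot form a simultaneous $\phi$-invariant family --- precisely the obstacle you flag at the end, and one that does not arise in Lemma~\ref{step-3/thm} only because there the leaves carry terminal edges. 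The repair is the two-line argument you already used in your first step: if $L\geq 2$, let $x$ be the vertex of $\alpha$ adjacent to $c$; then $x$ and $\phi^{n/3}(x)$ lie on different legs, so they are distinct $\phi$-equivalent vertices joined by the length-$2$ path through $c$, whence $\len(\alpha)\leq 2$ by global minimality, contradicting $\len(\alpha)=2L\geq 4$. Hence $L=1$ outright, no $F$-moves needed, and the tripod form holds exactly as stated.
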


\begin{Figure}[htb]{tripode1/fig}
\vskip-21pt
\fig{}{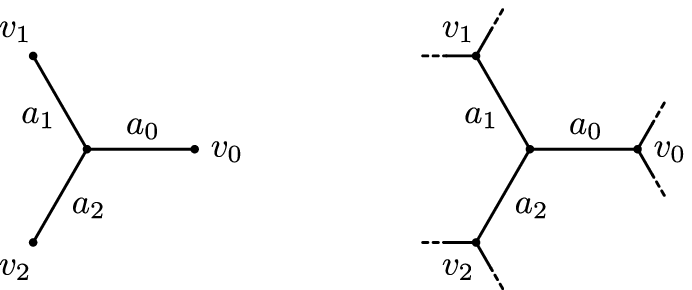}[-3pt]
{}{The form of the tripode $T$ in Lemma \ref{cycle-3/thm}.}
\end{Figure}

\begin{proof}
Let $\alpha \subset \Gamma$ be an arbitrary path of minimal length among all paths joining 
any two distinct $\phi$-equivalent vertices and let $v \neq v'$ be its ends. For any $i
\neq j \!\!\mod n$, let us consider the two paths $\phi^i(\alpha)$ and $\phi^j(\alpha)$,
and the four different situations described in Lemmas~\ref{paths-disjoint/thm} to
\ref{paths-doubled/thm}, which cover all the possibilities, thanks to the
$\phi$-equivalence of $v$ and $v'$.

The situation of Lemma~\ref{paths-diagonal/thm} cannot occur, while that of Lemma
\ref{paths-doubled/thm} could possibly occur only with $\phi^i(\alpha)$ and
$\phi^j(\alpha)$ sharing no edge, but the argument below shows this is actually
impossible.

Assume by contradiction that $\phi^i(\alpha)$ and $\phi^j(\alpha)$ are as in Lemma
\ref{paths-doubled/thm}, hence they meet at $\phi^i(v) = \phi^j(v)$ and $\phi^i(v') =
\phi^j(v')$. Let us denote by $a$ the first edge of $\alpha$ starting from $v$ and by $u$
the other end of $a$. Then, the vertices $\phi^i(u)$ and $\phi^j(u)$ are distinct since
$\phi^i(\alpha)$ and $\phi^j(\alpha)$ do not share any edge. Being $n$ an odd multiple of
3, thanks to Lemma~\ref{orders/thm} we have $\ord_\phi(v) = n/3$ and $\ord_\phi(u)= n$.

As a consequence, we get $\len(\alpha) = 2$. In fact, the concatenation $\phi^i(\bar a)\,
\phi^j(a)$ is simple path of length $2$ between the two distinct $\phi$-equivalent
vertices $\phi^i(u)$ and $\phi^j(u)$. Hence, by the global minimality of $\alpha$ we have
$\len(\alpha) \leq 2$. On the other hand, $\alpha$ cannot be reduced to the single edge
$a$, otherwise $v$ and $u$ should be $\phi$-equivalent, in contrast with $\ord_\phi(u)
\neq \ord_\phi(v)$.

Now, let $s$ be an integer such that $v' = \phi^s(v)$. Then, $\phi^s(u)$ coincides with 
one of $u$, $\phi^{n/3}(u)$ and $\phi^{2n/3}(u)$, since these are the three vertices
adjacent to $v'$. In any case, we can conclude that $s$ is a multiple of $n/3$, which is 
in contrast with $\ord_\phi(v) = n/3$ and $v' \neq v$. This proves that the situation of 
Lemma \ref{paths-doubled/thm} cannot occur.

At this point, we have that $\phi^i(\alpha)$ and $\phi^j(\alpha)$ can only meet as in
Lemma~\ref{paths-adjacent/thm}. Let $\phi^i(u)$ with $u \in \alpha$ be the common end
of $\delta_{i,1}$ and $\delta_{j,k}$. As in the proof of Lemma~\ref{cycle-p/thm}, we can 
see that either $\phi^j(u) = \phi^j(v')$ or $\phi^j(u) = \phi^i(u)$, due to the global 
minimality of $\alpha$. But differently from that proof, here the latter equality can 
actually hold when $u$ has order $n/3$ and $j = i \!\!\mod n/3$.

If $\phi^j(u) = \phi^i(u)$, then the global minimality of $\alpha$ implies that both
$\delta_{i,1}$ and $\delta_{j,k}$ have length 1. Therefore, $\alpha$ is the concatenation
of two edges $a$ and $b$ sharing the vertex $u$, and hence $T = \alpha \cup
\phi^{n/3}(\alpha) \cup \phi^{2n/3}(\alpha)$ has the form described in point~2 of
the statement. Of course, this means that $\cup_i\,\phi^i(\alpha) = T \cup \phi(T) \cup
\dots \cup \phi^{n/3\,-\,1}(T)$. Moreover, any two different copies of $T$ in this union 
are disjoint, otherwise they should share a free end, and there would be two paths
$\phi$-equivalent to $\alpha$ exiting from that common vertex like in 
Lemma~\ref{paths-doubled/thm}.

Notice that the last argument shows that, if $\phi^j(u) = \phi^i(u)$ for some $i$ and $j$,
then the same holds for any $i$ and $j$ such that $\phi^i(\alpha)$ and $\phi^j(\alpha)$
are not disjoint.

So, we are left with the case when $\phi^j(u) = \phi^j(v')$ for any two non-disjoint paths
$\phi^i(\alpha)$ and $\phi^j(\alpha)$. In this case, the same argument exploited in the
proof of Lemma~\ref{cycle-p/thm}, allows us to conclude that the situation is the one
described in point~1 of the statement.
\end{proof}

\begin{lemma}\label{step-3/thm}
Let $\phi: \Gamma \to \Gamma$ be an automorphism of a (possibly disconnected)
uni/trivalent graph $\Gamma$, whose order $n = \ord(\phi)$ is an odd multiple of $3$. If
$\alpha \subset \Gamma$\break is a (simple) path, possibly degenerate to a single vertex,
of minimal length among all paths joining any two distinct terminal edges in a given
$\phi$-orbit, then up to $F$-equivalence we can assume $\cup_i\,\phi^i(\alpha)$ to
be one of the following:
\begin{itemize}
\item[1)] a disjoint union $C \sqcup \phi(C) \sqcup \dots \sqcup \phi^{n/\ell - 1}(C)$
of $n/\ell$ simple cycles (possibly a single one, for $\ell = n$), each given by a
concatenation of images of $\alpha$, having the same structure as described in
Lemma~\ref{step-p/thm}, with $\alpha$ consisting of either one edge $a$ or two edges $a$
and $b$ (see Figure~\ref{onion/fig}, where $a_i$ and $b_i$ stand for $\phi^{is}(a)$ and
$\phi^{is}(b)$ respectively, and a similar notation is adopted for the terminal edges
$e_i$'s and the vertices $v_i$'s as well);
\item[2)] a disjoint union $T \sqcup \phi(T) \sqcup \dots \sqcup \phi^{n/3\,-\,1}(T)$ of
$n/3$ tripods (possibly a single one, for $n=3$), where $T = \alpha \cup
\phi^{n/3}(\alpha) \cup \phi^{2n/3}(\alpha)$, with $\alpha$ being the concatenation of two
edges $a$ and $b$, such that $b = \phi^{n/3}(\bar a)$, $\phi^{n/3}(b) = \phi^{2n/3}(\bar
a)$ and $\phi^{2n/3}(b) = \bar a$ (see left side of Figure~\ref{tripode2/fig}, where $a_i$
and $e_i$ stand for $\phi^{i \mkern1mu n/3}(a)$ and $\phi^{i \mkern1mu n/3}(e)$
respec\-tively);
\item[3)] a set of $n/3$ trivalent vertices, when $\alpha$ reduces to a single vertex,
that is the terminal edges it joins share a common trivalent end; in this case the whole
graph $\Gamma$ consists of a disjoint union $\hat T \sqcup \phi(\hat T) \sqcup \dots
\sqcup \phi^{n/3\,-\,1}(\hat T)$ of $n/3$ tripods, where $\hat T = e \cup \phi^{n/3}(e)
\cup \phi^{2n/3}(e)$ with $\alpha$ reduced to the trivalent vertex of $\hat T$\break (see 
right side of Figure~\ref{tripode2/fig}, where $e_i$ stands for the terminal edge $\phi^{i
\mkern1mu n/3}(e)$).
\end{itemize}
\end{lemma}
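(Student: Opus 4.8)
The plan is to run the proof of Lemma~\ref{step-p/thm} essentially verbatim, grafting onto it the tripod analysis already carried out for Lemma~\ref{cycle-3/thm}, so as to absorb the two phenomena that become possible when $n$ is an odd multiple of $3$: a vertex may have order $n/3$, and the ``balanced'' adjacency of two minimal paths may now occur. As before, let $e \neq e'$ be the two $\phi$-equivalent terminal edges joined by the minimal path $\alpha$, and let $v$ and $v'$ be their trivalent ends.

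First I would dispose of the degenerate case, which yields conclusion~3. If $\len(\alpha) = 0$ then $v = v'$ is a common trivalent end of $e$ and $e'$. Writing $e' = \phi^s(e)$ with $0 < s < n$ (recall $\ord_\phi(e) = n$), we get $\phi^s(v) = v$, so $\ord_\phi(v) < n$ and hence $\ord_\phi(v) = n/3$ by Lemma~\ref{orders/thm}. By case~3 of that lemma the three edges at $v$ are $e$, $\phi^{n/3}(e)$ and $\phi^{2n/3}(e)$, all terminal, forming the pure tripod $\hat T$. Applying $\phi$, every terminal edge of the orbit lies on such a tripod, so the orbit decomposes into the $n/3$ disjoint tripods $\hat T, \phi(\hat T), \dots, \phi^{n/3-1}(\hat T)$; since each is a whole connected component and, in the setting where all free ends form this single orbit, no further free ends remain, these exhaust $\Gamma$.

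For $\len(\alpha) > 0$ I would first note that $\ord_\phi(v) = n$: otherwise $\ord_\phi(v) = n/3$ and, as above, the three edges at $v$ would all be terminal, so $e$ and a translate would share the end $v$ and $\alpha$ would be degenerate. Hence $v \neq v'$, and exactly as recorded at the start of the section the situations of Lemmas~\ref{paths-diagonal/thm} and \ref{paths-doubled/thm} are excluded (the latter because $\ord_\phi(v)=n$ forbids $\phi^i(v)=\phi^j(v)$ for $i \neq j$). Thus for $i \neq j$ the paths $\phi^i(\alpha)$ and $\phi^j(\alpha)$ meet only as in Lemma~\ref{paths-adjacent/thm}, with $\len(\delta_{i,1}) = \len(\delta_{j,k}) \geq \len(\alpha)/2$. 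Here the single outcome of Lemma~\ref{step-p/thm} splits in two: if this inequality is always strict, the argument of Lemma~\ref{step-p/thm} goes through unchanged and gives the cyclic ``onion'' of conclusion~1; if equality holds for some pair, then $\phi^{j-i}$ fixes the midpoint $u$ of $\alpha$ and cyclically permutes the three edges issuing from it --- now legitimate since $3 \mid n$ --- so $u$ is the centre of a tripod and, reasoning as in the proof of Lemma~\ref{cycle-3/thm}, $\cup_i \phi^i(\alpha)$ is the disjoint union of tripods of conclusion~2.

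It then remains to carry out the $F$-equivalence reductions. In the cyclic case these are literally those of Lemma~\ref{step-p/thm}: slide the attached edges to collapse the arcs of Figure~\ref{onion-pf1/fig} to single edges, eliminate the resulting bigons by simultaneous edge $F$-moves, and finally shorten $\alpha$ to one or two edges. In the tripod case the same simultaneous, $\phi$-invariant slidings reduce each arm so that $\alpha$ becomes the two-edge chain $ab$ with $b = \phi^{n/3}(\bar a)$, as in Lemma~\ref{cycle-3/thm}. The step I expect to be the main obstacle is the tripod branch: one must verify that the balanced adjacency really propagates to a \emph{global} disjoint union of tripods, i.e.\ that the three cyclically permuted arms about each centre meet no other image of $\alpha$ (using the minimality of $\alpha$ together with $\ord_\phi(u) = n/3$), and that all the collapsing slidings can be realized by a single $\phi$-invariant $F$-move without merging distinct orbits --- the invariance bookkeeping emphasized in Remark~\ref{slidings/rem}.
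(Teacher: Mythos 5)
Your proposal is correct and follows essentially the same route as the paper's proof: the degenerate case $\len(\alpha)=0$ gives point~3 via $\ord_\phi(v)=n/3$, the diagonal and doubled configurations of Lemmas~\ref{paths-diagonal/thm} and \ref{paths-doubled/thm} are excluded, and the adjacent configuration splits on whether $\len(\delta_{i,1})=\len(\alpha)/2$ (tripods, point~2, reduced by $\phi$-invariant slidings as in Lemma~\ref{cycle-3/thm}) or $>\len(\alpha)/2$ (the onion of Lemma~\ref{step-p/thm}, point~1). The only local variation is your exclusion of the doubled case by first proving $\ord_\phi(v)=n$ (all three edges at a vertex of order $n/3$ would be terminal, forcing a degenerate minimal path), where the paper instead notes that $\phi^{j-i}$ would have to swap the two first edges at $\phi^i(v)=\phi^j(v)$, contradicting the oddness of $n$; both arguments are valid and rest on the same parity constraint.
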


\begin{Figure}[htb]{tripode2/fig}
\vskip-18pt
\fig{}{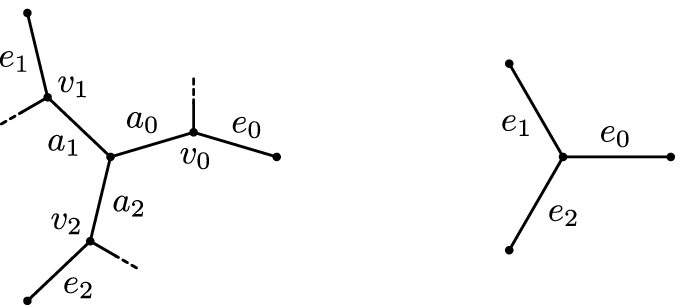}
{}{The form of the tripodes $T$ (left) and $\hat T$ (right) in Lemma \ref{step-3/thm}.}
\end{Figure}

\begin{proof}
Let $\alpha$ be a path as in the statement and let $e \neq e'$ be the terminal edges it
joins. Then the ends of $\alpha$ coincide with the unique trivalent ends $v$ and $v'$ 
of $e$ and $e'$ respectively.

If $v = v'$, that is $\len(\alpha) = 0$, then $\ord_\phi(v) = n/3$ according to
Lemma~\ref{orders/thm} and $e'$ is either $\phi^{n/3}(e)$ or $\phi^{2n/3}(e)$. In any
case, $\phi^{n/3}$ cyclically permutes the three edges of the tripode $\hat T = e \cup
\phi^{n/3}(e) \cup \phi^{2n/3}(e)$, and we have the situation described in point~3 of the
statement.

If $v \neq v'$, that is $\len(\alpha) \geq 1$, we consider any two distinct paths
$\phi^i(\alpha)$ and $\phi^j(\alpha)$ with $i \neq j \!\!\mod n$ and look once again at
the four possible situations described in Lemmas~\ref{paths-disjoint/thm} to
\ref{paths-doubled/thm}. 

As in the proof of Lemma~\ref{cycle-3/thm}, the situations of
Lemmas~\ref{paths-diagonal/thm} and \ref{paths-doubled/thm} cannot occur. But here the
argument to exclude the latter is different. Namely, if $\phi^i(\alpha)$ and
$\phi^j(\alpha)$ were as in Lemma~\ref{paths-doubled/thm}, then $\phi^{j-i}$ should swap
the first edges of them starting from $\phi^i(v) = \phi^j(v)$, being $\phi^i(e) =
\phi^j(e)$ the third edge at that vertex, and this would be in contrast with the oddness
of $n$.

Therefore, $\phi^i(\alpha)$ and $\phi^j(\alpha)$ can meet only as in
Lemma~\ref{paths-adjacent/thm}, and their common end $\phi^j(v) = \phi^i(v')$ cannot be
shared by any other $\phi^h(\alpha)$ (cf. proof of Lemma~\ref{cycle-p/thm}). Then, 
consider the two subpaths $\delta_{i,1} \subset \phi^i(\alpha)$ and $\delta_{j,k} \subset
\phi^j(\alpha)$ in Figure~\ref{paths-adjacent/fig}.

\begin{Figure}[b]{tripode3/fig}
\vskip3pt
\fig{}{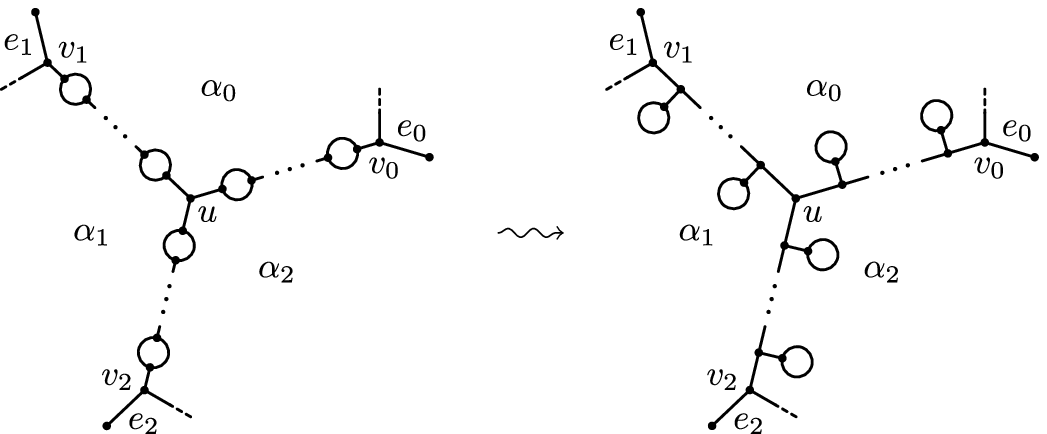}
{}{The starting configuration $T$ in the proof of Lemma \ref{cycle-3/thm}.}
\end{Figure}

If $\len(\delta_{i,1}) = \len(\delta_{j,k}) = \len(\alpha)/2$, then for the middle vertex
$u$ of $\alpha$ we have that $\phi^i(u) = \phi^j(u)$ is the common end of $\delta_{i,1}$
and $\delta_{j,k}$. By the same argument as above, this implies that $\ord_\phi(u) = n/3$ 
and $\phi^{n/3}$ cyclically permutes the three paths $\alpha$, $\phi^{n/3}(\alpha)$ and 
$\phi^{2n/3}(\alpha)$. Moreover, such three paths are disjoint from any other path 
$\phi^h(\alpha)$, hence we can conlcude that $\cup_i\,\phi^i(\alpha)$ is a disjoint
union $T \sqcup \phi(T) \sqcup \dots \sqcup \phi^{n/3\,-\,1}(T)$, with $T$ as in the left 
side of Figure~\ref{tripode3/fig}. Here, $\alpha_i$ stands for the path $\phi^{i \mkern1mu 
n/3}(\alpha)$, which is the part of $T$ in the corresponding circular sector, and any arc 
except the terminal ones represent a path of edges. Analogously, $v_i$ and $e_i$ stand for 
$\phi^{i \mkern1mu n/3}(v)$ and $\phi^{i \mkern1mu n/3}(e)$ respectively.

We can reduce any arc in this starting configuration $T$ to a single edge,
by performing $\phi$-invariant slidings as in the proof of Lemma~\ref{step-p/thm} (cf.
Figure~\ref{onion-pf2/fig}), in order to push all the edges attached along the
$\alpha_i^s$ so that that they becomes attached to the last one exiting from $v_i$ (and
different from $e_i$). Then, the obvious $\phi$-invariant $F$-moves allow us to 
change the resulting $T$ into the form depicted on the right side of 
Figure~\ref{tripode3/fig}, with each $\alpha_i$ being now the shortest path between $v_i$ 
and $v_{i+1 \!\!\mod 3}$. Finally, we can get the configuration described in point~2 of 
the statement and shown in the left side of Figure~\ref{tripode2/fig}, by further 
$\phi$-invar\-iant slidings of the new edges attached along the actual $\alpha_i$ as 
above.

Otherwise, if $\len(\delta_{i,1}) = \len(\delta_{j,k}) > \len(\alpha)/2$, then the same 
argument as in the proof of Lemma~\ref{step-p/thm} still works here, to put 
$\cup_i\,\phi^i(\alpha)$ in the form stated in point~1 of the statement.
\end{proof}

\begin{proposition}\label{onion-3/thm}
Let $\phi \in \A_{g,b}$ be a non-trivial automorphism, whose order $n = \ord(\phi) > 1$ is
an odd multiple of $3$. Then, $\phi$ is $F$-equivalent to the composition $\tau \circ
\sigma$ of two automorphisms $\sigma, \tau \in \A_{g,b}$ such that $\ord(\sigma) =
\ord(\tau) = 2$. Moreover, both $\sigma$ and $\tau$ can be assumed to fix an edge and
reverse an invariant edge.
\end{proposition}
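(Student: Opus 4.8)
The plan is to reproduce, \emph{mutatis mutandis}, the recursive construction carried out in the proof of Proposition~\ref{onion-p/thm}, replacing the applications of Lemmas~\ref{cycle-p/thm} and~\ref{step-p/thm} by the corresponding Lemmas~\ref{cycle-3/thm} and~\ref{step-3/thm}. That is, I would build sequences $\phi_i : \Gamma_i \to \Gamma_i$ in $\A_{g,b}$, subgraphs $\emptyset = \Lambda_0 \varsubsetneq \Lambda_1 \varsubsetneq \dots \varsubsetneq \Lambda_r = \Gamma_r$, and involutions $\sigma_i, \tau_i$ of $\Lambda_i$ with $\tau_i \circ \sigma_i = \phi_{i|\Lambda_i}$, satisfying the same four properties~1--4 as there, so that $\sigma = \sigma_r$ and $\tau = \tau_r$ furnish the desired decomposition. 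Whenever a layer produced by Lemma~\ref{cycle-3/thm} or~\ref{step-3/thm} is a union of cycles (case~1 in either lemma), the treatment is word-for-word that of Proposition~\ref{onion-p/thm}; all the genuinely new work concerns the tripod configurations (case~2 of Lemma~\ref{cycle-3/thm}, and cases~2 and~3 of Lemma~\ref{step-3/thm}).

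For a tripod layer I would exploit that, by property~1 of this section together with Lemma~\ref{orders/thm}, the leg-ends of the tripods form a single $\phi_i$-orbit $V$ of size $n$, while the common trivalent centres form a single $\phi_i$-orbit $W$ of size $n/3$; crucially, since $n$ is an \emph{odd} multiple of $3$, both $n$ and $n/3$ are odd. I would then define $\sigma_i$ and $\tau_i$ by the very same dihedral formulas used in the cyclic case, namely $\sigma_i(\phi_i^{\,j}(x)) = \phi_i^{-j}(x)$ and $\tau_i(\phi_i^{\,j}(x)) = \phi_i^{\,1-j}(x)$, applied simultaneously to a leg-end $x \in V$ and to a centre $x \in W$ (extending $\sigma_{i-1}, \tau_{i-1}$ at the recursive stage). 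The point to check is adjacency: a leg-end $\phi_i^{\,j}(v)$ is joined to the centre $\phi_i^{\,j \bmod (n/3)}(u)$ of its own tripod, and since $-j$ and $1-j$ reduce compatibly modulo $n/3$, both maps carry legs to legs; the composition $\tau_i \circ \sigma_i$ then manifestly acts as $\phi_i$ on each orbit, and each map is an involution.

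As in Proposition~\ref{onion-p/thm}, the recursion is driven outward from a core layer (Lemma~\ref{cycle-3/thm}) by successive layers (Lemma~\ref{step-3/thm}); a cyclic layer with $\alpha$ a single edge, a splitting of $\Delta_{i-1}$ into $n$ components, or any tripod or degenerate layer (where the centre is trivalent with all three incident edges used up) forces $\Lambda_i = \Gamma_i$ and terminates the process, so that $r$ is finite. The fixed-edge half of property~4 is then immediate: because $n$ and $n/3$ are odd, the reflection $\sigma_i$ fixes exactly one leg-end and exactly one centre, these being joined by a leg, so $\sigma_i$ fixes that edge (and symmetrically for $\tau_i$). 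This also pins down the distinguished vertex $u_i \in U_i$ required in property~4.

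The hard part is the remaining clause of property~4, that each $\sigma_i, \tau_i$ with $i \geq 2$ \emph{reverses} an invariant edge. For a cyclic layer this is settled exactly as in Proposition~\ref{onion-p/thm}: the oddness of $n$ produces a unique ``antipodal'' edge of the cycle carried to itself with reversed orientation. For a tripod layer, however, every edge is a leg joining the size-$n$ orbit $V$ to the size-$(n/3)$ orbit $W$, and since $\sigma_i$ and $\tau_i$ preserve each $\phi_i$-orbit setwise, no leg can ever be reversed; the same obstruction affects the terminal edges joining distinct orbits in cases~2 and~3 of Lemma~\ref{step-3/thm}. The way around this is to observe that the reversed-edge property, once established, is inherited by every subsequent layer, because $\sigma_i$ restricts to $\sigma_{i-1}$ on $\Lambda_{i-1}$; it therefore suffices to ensure a reversed edge appears no later than the second layer. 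I would accordingly take the core to be cyclic whenever Lemma~\ref{cycle-3/thm} permits, and analyze directly the residual configurations in which only tripods occur — here the oddness of $n/3$ is again decisive, and if necessary a single $\phi_i$-invariant $F$-move can expose a reversible edge without disturbing the involutions already fixed on $\Lambda_{i-1}$. Establishing this uniformly over the tripod cases is the crux of the argument.
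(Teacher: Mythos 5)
Your overall scheme is the paper's own: Proposition~\ref{onion-3/thm} is indeed proved by rerunning the recursion of Proposition~\ref{onion-p/thm} with Lemmas~\ref{cycle-3/thm} and~\ref{step-3/thm} in place of Lemmas~\ref{cycle-p/thm} and~\ref{step-p/thm}, and your dihedral formulas $\sigma_i(\phi_i^j(x)) = \phi_i^{-j}(x)$, $\tau_i(\phi_i^j(x)) = \phi_i^{1-j}(x)$ on the size-$n$ and size-$n/3$ orbits, with the adjacency check $j = k \!\!\mod n/3$, are exactly the paper's. But there are two genuine gaps. First, you take a tripod layer to consist of the full tripods (legs plus centres). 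When the free ends of the tripod of Lemma~\ref{cycle-3/thm} are trivalent (right side of Figure~\ref{tripode1/fig}), the complement $\Delta_1 = \Cl(\Gamma_1 - \Lambda_1)$ then has those vertices as \emph{bivalent} vertices, so it is not a uni/trivalent graph, $U_1$ is not an orbit of free ends of cardinality $n$, and Lemma~\ref{step-3/thm} cannot be applied to produce the next layer: your induction hypotheses break at the very first tripod. The paper's modification ({\sl a\/}) exists precisely to repair this: $\Lambda_1$ is taken to be the orbit of the star $T'$ of the tripod's centre in the \emph{first barycentric subdivision} of $\Gamma_1$, so that the leg midpoints become the univalent boundary orbit $U_1$ of cardinality $n$ (modification ({\sl b\/})) and $\Delta_1$ stays uni/trivalent with the outer half-legs as its terminal edges. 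Your proposal never mentions this device.

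Second, and decisively, you do not prove the reversed-invariant-edge clause: your last sentence concedes that ``establishing this uniformly over the tripod cases is the crux,'' so the proposal is not a complete proof of the statement as given. Your diagnosis of the difficulty is correct --- in a tripod layer every new edge joins two distinct $\phi_i$-orbits, so $\sigma_i$ and $\tau_i$ can fix but never reverse an edge there, and the property can only be inherited from a cyclic layer --- but the proposed remedies are not carried out, and the second one cannot work uniformly: for the $Z_3$-symmetric ``spider'' in $\G_{0,6}$ (a trivalent centre joined to three trivalent vertices, each carrying two terminal edges, with $\phi$ the rotation of order $3$) the recursion consists of a tripod core followed by the component-permutation termination, \emph{no} automorphism of this graph reverses any edge, and the graph is rigid under $\phi$-invariant $F$-moves, so no single $F$-move can ``expose a reversible edge.'' For comparison, the paper does not take your detour at all: it asserts that the properties of Proposition~\ref{onion-p/thm} persist, what it actually secures in the tripod layers is the fixed-edge half (via the oddness of $n$ and $n/3$, as you note), and the reversal is obtained only by inheritance from cyclic layers when these occur. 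Note finally that what the main theorem consumes downstream, through Proposition~\ref{automorphism-2/thm}, is ``fixes an edge \emph{or} reverses an invariant edge,'' and the fixed-edge alternative is always available; so the place where you stalled is exactly the delicate point, and completing your proposal requires either carrying out the postponed analysis or settling for the disjunctive property that the application needs.
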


\begin{proof}
Given $\phi: \Gamma \to \Gamma$ as in the statement, the same recursive construction of
the proof of Proposition~\ref{onion-p/thm} will provide a sequence of automorphisms
$\phi_i: \Gamma_i \to \Gamma_i$ and a sequence of subgraphs $\Lambda_i \subset \Gamma_i$
with $i = 0, \dots, r$, satisfying the properties required in that proof, except for the
following facts: 
\begin{itemize}
\item[{\sl a\/})] $\Lambda_1$ is allowed to be a subgraph of the first
barycentric subdivision of $\Gamma_1$ instead that a subgraph of $\Gamma_1$ itself;
\item[{\sl b\/})] $U_i$ is required to have cardinality $n$ for $i = 1, \dots, r-1$
(Lemma~\ref{orders/thm} ensures that such cardinality is either $n$ or $n/3$). 
\end{itemize}
In particular, all the $\phi_i$ are $F$-equivalent to $\phi$, and all the restrictions
$\phi_{i|\Lambda_i}$ admit a factorization $\tau_i \circ \sigma_i$ into two involutions of
$\Lambda_i$. Hence, $\phi$ turns out to be $F$-equivalent to $\phi_r = \tau_r \circ
\sigma_r$ (being $\Lambda_r = \Gamma_r$), which proves the proposition.

The starting step of the recursion is provided by Lemma~\ref{cycle-3/thm}. If the
situation described in point 1 of that lemma occurs, we define the automorphism $\phi_1:
\Gamma_1 \to \Gamma_1$, the subgraph $\Lambda_1 \subset \Gamma_1$, the two involutions
$\sigma_1,\tau_1: \Lambda_1 \to \Lambda_1$ and the vertex $u_1$, as in the proof of
Proposition~\ref{onion-p/thm}. Otherwise, the situation described in point 2 of
Lemma~\ref{cycle-3/thm} occurs. In this case, there exists a $\phi$-invariant disjoint
union of $n/3$ tripodes $T \sqcup \phi(T) \sqcup \dots \sqcup \phi^{n/3\,-\,1}(T) \subset
\Gamma$, with $\phi^{n/3}$ cyclically permuting the edges of the tripode $T$. Then, we put
$\Gamma_1 = \Gamma_0 = \Gamma$, $\phi_1 = \phi_0 = \phi$ and $\Lambda_1 = T' \sqcup
\phi(T') \sqcup \dots \sqcup \phi^{n/3\,-\,1}(T') \subset \Gamma_1$, where $T' \subset T$
is the star of the trivalent vertex of $T$ in the first barycentric subdivision. Moreover, 
we denote by $v$ the trivalent vertex of $T'$ and by $u$ any free end of $T'$, and 
we define $\sigma_1$ and $\tau_1$ by putting $\sigma_1(\phi^j(v)) = \phi^{-j}(v)$, 
$\sigma_1(\phi^j(u)) = \phi^{-j}(u)$, $\tau_1(\phi^j(v)) = \phi^{1-j}(v)$ and
$\tau_1(\phi^j(u)) = \phi^{1-j}(u)$ for $j = 0, \dots, n-1$. 

In order to conclude the starting step, it is enough to observe that: $\sigma_1$ and
$\tau_1$ are well-defined involutive automophisms of $\Lambda_1$, since they preserve the
adjacency of vertices, being $\phi^j(v)$ adjacent to $\phi^k(u)$ if and only if $j = k
\!\!\mod n/3$; $\tau_1 \circ \sigma_1$ coincides with the restriction
$\phi_{1|\Lambda_1}$; $\Delta_1 = \Cl(\Gamma_1 - \Lambda_1)$ is a uni/trivalent graph;
$U_1 = \Delta_1 \cap \Lambda_1$ consists of the $\phi_1$-orbit of the univalent vertex $u$
of $\Delta_1$ and it has cardinality $n$; $u_1 = u$ is the only vertex of $U_1$ fixed by
$\sigma_1$.

Now, to realize the recursive step of the construction, assume we are given $\phi_{i-1}:
\Gamma_{i-1} \to \Gamma_{i-1}$, $u_{i-1} \in U_{i-1} \subset \Lambda_{i-1} \varsubsetneq
\Gamma_{i-1}$ and $\sigma_{i-1},\tau_{i-1}: \Lambda_{i-1} \to \Lambda_{i-1}$ with the
properties required in the proof of Proposition~\ref{onion-p/thm} for $i-1 < r$,
integrated by the points {\sl a\/} and {\sl b\/} said above. In particular, the
$\phi_{i-1}$-orbit $U_{i-1}$ has cardinality $n$. Therefore, since $\ord(\phi_{i-1}) = n$
by Lemma~\ref{ordinv/thm}, also the restriction of $\phi_{i-1}$ to the uni/tri\-valent
graph $\Delta_{i-1} = \Cl(\Gamma_{i-1} - \Lambda_{i-1})$ is an automorphism of the same
order $n$. Furthermore, the terminal edges of $\Delta_{i-1}$ ending at vertices in
$U_{i-1}$ are all distinct (see proof of Proposition~\ref{onion-p/thm}).

If $\Delta_{i-1}$ does not contain any path joining different terminal edges ending at
vertices in $U_{i-1}$, then we can terminate the recursion with the $i$-th step, in the 
same way as in the proof of Proposition~\ref{onion-p/thm}.

Otherwise, if a path $\alpha \subset \Delta_{i-1}$ exists joining different terminal edges
of $\Delta_{i-1}$ ending at vertices in $U_{i-1}$, then we can choose it to be minimal
(hence simple) and apply Lemma \ref{step-3/thm} to the restriction of $\phi_{i-1}$ to
$\Delta_{i-1}$ and to such a minimal $\alpha$, in order to get the structure described in
that statement for $\cup_j\, \phi_{i-1}^j(\alpha) \subset \Delta_{i-1}$ up to
$F$-equivalence. Such an $F$-equivalence only involves internal edges of $\Delta_{i-1}$,
hence it does not change the set of free ends $U_{i-1} \subset \Delta_{i-1}$ and the
restriction of $\phi_{i-1|\Delta_{i-1}}$. Therefore, it can be extended to an
$F$-equivalence of the whole $\phi_{i-1}$ on $\Gamma_{i-1}$, which leaves $\Lambda_{i-1}$ 
and the restriction $\phi_{i-1|\Lambda_{i-1}}$ unchanged.

As a result, we get a new uni/trivalent graph $\Gamma_i$, such that $\Lambda_{i-1} \subset
\Gamma_i$ and a new automorphism $\phi_i: \Gamma_i \to \Gamma_i$, which is $F$-equivalent
to $\phi_{i-1}$ and coincides with $\phi_{i-1}$ on $\Lambda_{i-1}$. We also get a new
minimal path $\alpha \subset \Cl(\Gamma_i - \Lambda_{i-1})$, which joins different
$\phi_i$-equivalent terminal edges of $\Cl(\Gamma_i - \Lambda_{i-1})$ ending at vertices
in $U_{i-1}$, such that $\cup_j\, \phi_i^j(\alpha)$ itself (no more up to
$F$-equivalence) has the structure stated in Lemma~\ref{step-3/thm}. Then, we proceed by 
distinguishing the different situations described in points 1 to 3 of that lemma.

If the situation of point~1 occurs, then the $i$-th step is the same as in the proof of
Proposition~\ref{onion-p/thm}. In particular, if such step is not the concluding one, then
$U_i$ can be easily seen to have cardinality $n$.

In the case when the situation of point~2 occurs, we denote by $e \subset \Gamma_i$ the
terminal edge of $\Cl(\Gamma_i - \Lambda_{i-1})$ whose free end is $u_{i-1}$. Then, we
choose $u_i$ to be the other end of $e$ and put $\Lambda_i = \Lambda_{i-1} \cup_j
\phi_i^j(e) \cup_j \phi_i^j(T)$, in such a way that $\Delta_i = \Gamma_i - \Lambda_i$ is a
uni/trivalent graph, which meets $\Lambda_i$ at the $\phi_i$-orbit $U_i$ of $u_i$.
Moreover, we define $\sigma_i$ and $\tau_i$ to be the unique automorphisms of $\Lambda_i$
extending $\sigma_{i-1}$ and $\tau_{i-1}$ respectively. The existence of such extensions
and all the required properties of them, in particular the equality $\tau_i \circ \sigma_i
= \phi_{i|\Lambda_i}$ and the fact that $u_i$ is the unique vertex of $U_i$ fixed by
$\sigma_i$, can be proved by the same argument as in the proof of Lemma~\ref{onion-p/thm}.

Finally, if the situation of point~3 occurs, we just put $\Lambda_i = \Lambda_{i-1}
\cup_j\, \phi^j_i(\hat T) = \Gamma_i$, where the last equality is due to the connectedness
of $\Gamma_i$, and define $\sigma_i$ and $\tau_i$ to be the unique extensions of
$\sigma_{i-1}$ and $\tau_{i_1}$ to $\Gamma_i$. These can be easily seen to verify all the
required properties for $i = r$, hence the recursion terminates with this step.
\end{proof}

%\newpage

\section{The case of order $2^m$%
\label{order-2m/sec}}

In order to deal with automorphisms $\phi: \Gamma \to \Gamma$ in $\A_{g,b}$ of order
$2^m$, we first prove that we can limit ourselves to consider the case when $\ord_\phi(e)
= 2^m$ for every edge $e$ of $\Gamma$ (hence, according to Lemma \ref{orders/thm},
$\ord_\phi(v) = 2^m$ for every vertex $v$ of $\Gamma$).

\begin{lemma}\label{reduction/thm}
Let $\phi:\Gamma \to \Gamma$ be an automorphism in $\A_{g,b}$ with $\ord(\phi) = 2^m$, and
let $2^k = \min_{e \in \Gamma} \ord_\phi(e)$, where $e$ varies among all the edges of 
$\Gamma$. Then, up to $F$-equivalence and composition with elementary automorphisms, 
$\phi$ can be reduced to an automorphism $\psi: \Gamma' \to \Gamma'$ such that $\ord(\psi) 
= 2^k$ and $\ord_\psi(e) = 2^k$ for every edge $e$ of $\Gamma'$.
\end{lemma}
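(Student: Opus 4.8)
The plan is to induct on $m-k$, peeling off the highest edge order one power of $2$ at a time. First I would record the local structure: since $\ord(\phi)=2^m$ is a power of $2$, case~3 of Lemma~\ref{orders/thm} cannot occur (no factor $3$), so at every trivalent vertex the three incident edges are either all of the same order as the vertex (case~1) or split as two edges of doubled order swapped by the corresponding power of $\phi$ together with one undoubled edge (case~2). By the connected case of Lemma~\ref{orders/thm} the set of edge orders is exactly $\{2^k,2^{k+1},\dots,2^m\}$, and the maximal edge order equals $\ord(\phi)=2^m$. If $m=k$ every edge already has order $2^k$ and we take $\psi=\phi$, so I assume $m>k$ and study the involution $\iota=\phi^{2^{m-1}}$. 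Its fixed edges and vertices are precisely those of order $\le 2^{m-1}$, while the edges of order $2^m$ are exactly the ones $\iota$ moves; by case~2 of Lemma~\ref{orders/thm} these top-order edges occur in $\iota$-swapped pairs $e,\,e'=\phi^{2^{m-1}}(e)$ meeting either at an order-$2^m$ vertex (all three top edges) or at a \emph{reflection vertex} $v$ of order $2^{m-1}$, where they are accompanied by a single lower-order edge $e''$ that $\iota$ fixes.

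The reduction step aims to remove the action of $\iota$ on these top-order edges by composing $\phi$ with elementary switches, thereby lowering $\ord(\phi)$ to $2^{m-1}$ without disturbing the edges of order $\le 2^{m-1}$ (in particular the minimal ones, so that the minimum stays $2^k$). The difficulty is that an elementary switch in the sense of Definition~\ref{switches/def} is only available on a bigon of two internal edges or on a pair of terminal edges; it is \emph{not} legitimate to switch $e$ and $e'$ when their far ends are distinct non-univalent vertices. So before composing, I would use $\phi$-invariant $F$-moves to bring each $\iota$-swapped top pair into switchable position. Concretely, I would apply to the top-order subgraph $H=\bigcup_i\phi^i(\{\text{order-}2^m\text{ edges}\})$ the same cycle/onion normalization used in Lemmas~\ref{cycle-p/thm} and \ref{step-p/thm}: slide off the edges hanging on the top-order paths (Remark~\ref{slidings/rem}), fuse and eliminate the resulting bigons by simultaneous $\phi$-invariant edge $F$-moves, and reduce the $\iota$-swapped pairs to single edges joining consecutive reflection vertices, so that each pair $\{e,\iota(e)\}$ becomes a bigon or a terminal pair.

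Throughout this normalization the geometry is controlled by the path lemmas of Section~\ref{lemmas/sec}: applied to a minimal path $\alpha$ between two distinct $\phi$-equivalent top-order vertices, Lemmas~\ref{paths-disjoint/thm} and \ref{paths-adjacent/thm} tell me exactly how the images $\phi^i(\alpha)$ intersect, while Lemma~\ref{paths-doubled/thm} with $\ord(\phi)$ even governs the reflection incidences produced by $\iota$. These force $H$ into the standard disjoint-cycle (and, in degenerate sub-cases, bigon) form on which the $F$-moves can be carried out simultaneously and $\phi$-invariantly. Once every $\iota$-swapped top pair is a bigon or terminal pair, the corresponding elementary switches have pairwise disjoint support, hence commute, and their product $\Sigma$ over a full $\phi$-orbit is a $\phi$-invariant composition of elementary automorphisms realizing exactly the swapping that $\iota$ performs on the top-order edges. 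Composing $\phi$ with $\Sigma$ then yields an automorphism $\phi'$, $F$-equivalent to $\phi$ up to elementary automorphisms, whose action on the former top-order part no longer carries the order-$2^m$ swapping, so that $\ord(\phi')\le 2^{m-1}$ (and $=2^{m-1}$ by Lemma~\ref{orders/thm}); Lemma~\ref{ordinv/thm} guarantees the bookkeeping on orders is consistent across the $F$-moves. Iterating drives the maximal order down to $2^k$, producing the desired $\psi$ with $\ord(\psi)=2^k=\ord_\psi(e)$ for every edge.

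The main obstacle is precisely the restricted nature of elementary switches together with global $\phi$-equivariance. The clean cancellation of $\iota$ only works after the top pairs have been $F$-maneuvered into bigons or terminal pairs, and this must be done coherently along the entire $\phi$-orbit at once, not vertex by vertex. The delicate cases are those where the top-order subgraph has genuine order-$2^m$ (degree-three) vertices, where two reflection vertices are joined directly by a top-order edge, or where the top edges form loops or multiple edges; there the $F$-moves and the switch-product $\Sigma$ can interfere, and one must verify both that $\Sigma$ is still $\phi$-invariant with disjoint support and that composing with it genuinely lowers the order rather than merely conjugating the involution. Disentangling these incidences, using the four path lemmas to rule out the obstructive configurations, is where the real work of the proof lies.
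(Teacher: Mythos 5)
There is a genuine gap, and it lies exactly where you concede ``the real work of the proof lies''. Your plan --- normalize the whole top-order subgraph into cycle/onion form by $\phi$-invariant slidings and then cancel the involution $\iota=\phi^{2^{m-1}}$ by a global product $\Sigma$ of elementary switches --- founders twice. First, the normalization machinery you invoke is not available here: Lemmas~\ref{cycle-p/thm} and \ref{step-p/thm} (and their proofs, including the bigon-elimination arguments) require $\ord(\phi)$ to be a multiple of neither $2$ nor $3$, while their order-$2^m$ analogues, Lemmas~\ref{cycle-2/thm} and \ref{step-2/thm}, assume $\ord_\phi(e)=n$ for \emph{every} edge --- which is precisely the conclusion of Lemma~\ref{reduction/thm}; the paper proves this lemma first so that the Section~\ref{order-2m/sec} normalization can be applied afterwards, so your argument is circular. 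Second, elementary switches only interchange \emph{adjacent} edges, and an $\iota$-swapped top-order pair $e,\iota(e)$ need not share a vertex at all (e.g.\ when all vertices along a top-order cycle have order $2^m$), so no switch product realizes the swapping $\iota$ performs edge-by-edge; making every such pair a bigon or terminal pair coherently along a $\phi$-orbit is exactly the unproved content of your sketch.

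The paper's proof avoids all of this with a much more local device that your proposal is missing. It inducts on the lexicographically ordered pair $(m,n_m)$, where $n_m$ counts the order-$2^m$ edges, and at each step needs only \emph{one} vertex $v$ of order $2^{m-1}$ carrying a single adjacent $\iota$-swapped pair $e_1,e_2=\phi^{2^{m-1}}(e_1)$ (guaranteed by Lemma~\ref{orders/thm}, since the edge orders form the full set $\{2^k,\dots,2^m\}$). When the far ends $v_1,v_2$ are univalent or coincide, one terminal or internal switch $S_{e_1,e_2}$ suffices, halving the orders along the single orbit of $e_1$ and leaving all other edge orders untouched. The key case you do not handle --- $v_1,v_2$ distinct trivalent vertices, where no switch exists --- is dispatched not by maneuvering the pair into switchable position but by a single $\phi$-invariant $F$-move $F_{\T,\T'}$ on a neighborhood of $e_1\cup e_2$ (Figure~\ref{reduction/fig}) that directly \emph{splits} the $\phi$-orbit of $e_1$ into two orbits of cardinality $2^{m-1}$, decreasing $n_m$ with no switch and no global normalization. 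Without this orbit-splitting move, your reduction step cannot be completed in the configurations you yourself flag as delicate.
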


\begin{proof}
The proof is by induction on the pair $(m,n_m)$ with respect to the lexicographic order,
where $n_m$ is the number of the edges $e$ of $\Gamma$ such that $\ord_\phi(e) = 2^m$.

The base of the induction is for $m = k$, and hence $\ord_\phi(e) = 2^m$ for all the edges  
$e$ of $\Gamma$, that is $n_m = 3g - 3 + 2b$. 

If on the contrary $m > k$, then Lemma \ref{orders/thm} tells us that there 
are three distinct edges $e_0,e_1$ and $e_2$ of $\Gamma$ sharing a trivalent vertex $v$ of 
$\Gamma$, such that $\ord_\phi(e_0) = \ord_\phi(v) = 2^{m-1}$, $\ord_\phi(e_1) = 
\ord_\phi(e_2) = 2^m$ and $e_2 = \phi^{2^{m - 1}}(e_1)$ (note that $e_1 \neq e_2$, 
otherwise $e_1 = e_2$ would be a loop and its order should be $2^{m-1}$). Let $v_1$ and 
$v_2$ the other ends of $e_1$ and $e_2$ respectively.

If $v_1$ and $v_2$ are (distinct) univalent vertices, then by composing with the terminal
switch $S_{e_1,e_2}$ we get an automorphism $\rho = S_{e_1,e_2} \circ \phi$ with either
$m(\rho) < m(\phi)$ or $m(\rho) = m(\phi)$ and $n_m(\rho) < n_m(\phi)$. In fact,
$\ord_\rho(e) = \ord_\phi(e)/2$ for all the edges $e$ in the $\phi$-orbit of $e_1$ (and
$e_2$), while $\ord_\rho(e) = \ord_\phi(e)$ for all the other edges of $\Gamma$.

If $v_1$ and $v_2$ are coinciding trivalent vertices, then the same argument as above 
applies, but with $S_{e_1,e_2}$ an internal switch.

Finally, assume that $v_1$ and $v_2$ are distinct trivalent vertices. Then, we have the 
situation depicted in the left side of Figure \ref{reduction/fig}, where 
$\phi^{2^{m-1}}(a_i) = a_{3-i}$ and $\phi^{2^{m-1}}(b_i) = b_{3-i}$ for $i = 1,2$ (with 
the four edges $a_1,a_2, b_1$ and $b_2$ not necessarily distinct). 
\begin{Figure}[htb]{reduction/fig}
\fig{}{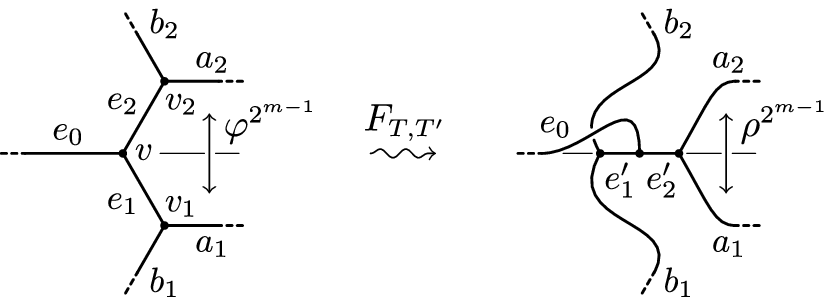}
{}{Reducing $(m(\phi),n_m(\phi))$.}
\end{Figure}
In this case, we can perform the $\phi$-invariant move $F_{\cal T,\cal T'}$ acting on a
neighborhood of $e_1 \cup e_2$ as described in Figure \ref{reduction/fig}. This changes
$\phi$ into an automorphism $\rho$ with either $m(\rho) < m(\phi)$ or $m(\rho) = m(\phi)$
and $n_m(\rho) < n_m(\phi)$. In fact $F_{\cal T,\cal T'}$, replaces the $\phi$-orbit of
$e_1$ (and $e_2$) by two different $\rho$-orbits of $e'_1$ and $e'_2$ respectively, having
cardinality $2^{m-1}$. This completes proof of the inductive step.
\end{proof}

In the light of the previous lemma, the next two lemmas concern the special case of an 
automorphism $\phi: \Gamma \to \Gamma$ with $\ord(\phi) = n = 2^m$ and $\ord_\phi(e) = n$ 
for every edge $e$ of $\Gamma$. We notice that, under the latter assumption, the situation 
described in Lemma \ref{paths-doubled/thm} can occur only with $\phi^i(\alpha) = 
\phi^j(\alpha)$, that is $i = j \!\!\mod n$.

\pagebreak

\begin{lemma}\label{cycle-2/thm}
Let $\phi: \Gamma \to \Gamma$ be an automorphism of a (possibly disconnected)
uni/trivalent graph $\Gamma$, with $\ord(\phi) = n = 2^m$ and $\ord_\phi(e) = n$ for every
edge $e$ of $\,\Gamma$.\break If $\alpha \subset \Gamma$ is a (simple) path of minimal
length among all paths joining any two distinct $\phi$-equivalent vertices, then $\cup_i\,
\phi^i(\alpha)$ is one of the following:
\begin{itemize}
\item[1)] a disjoint union $C \sqcup \phi(C) \sqcup \dots \sqcup \phi^{n/\ell - 1}(C)$
of $n/\ell$ simple cycles (possibly a single one, for $\ell = n$), each given by a
concatenation of images of $\alpha$, having the same structure as described in Lemma
\ref{cycle-p/thm}; in this case, up to $F$-equivalence we can assume $\alpha$ to consist
of a single edge $a$ (see Figure \ref{cycle-2/fig}, where $\ell$ is a power of $2$, 
$a_i$ stands for $\phi^{is}(a)$ and a similar notation is adopted for the vertices $v_i$ 
as well);

\begin{Figure}[htb]{cycle-2/fig}
\fig{}{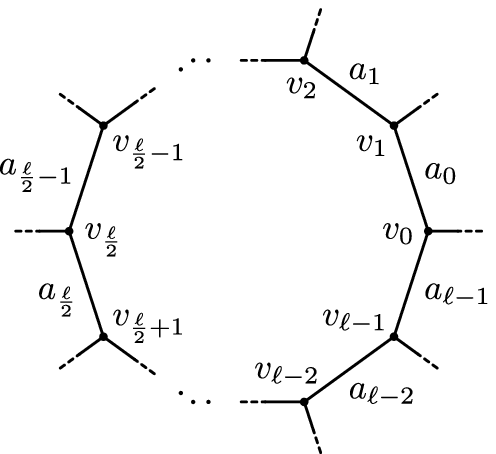}
{}{The form of the cycle $C$ in Lemma~\ref{cycle-2/thm}.}
\end{Figure}

\item[2)] a disjoint union $D \sqcup \phi(D) \sqcup \dots \sqcup \phi^{n/2\,-\,1}(D)$ of
$n/2$ ``diagonal'' edges (possibly a single one, for $n = 2$), where $D = \alpha$ and
$\phi^{n/2}(D) = \bar D$ (see Figure~\ref{diagonal1/fig}, where the vertices $v_0$ and
$v_1$ are switched by $\phi^{n/2}$, and they can be either both univalent as on the left
side or both trivalent as on the right side).
\end{itemize}
\end{lemma}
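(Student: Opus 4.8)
The plan is to follow the same template as the proof of Lemma~\ref{cycle-p/thm}, analyzing how two images $\phi^i(\alpha)$ and $\phi^j(\alpha)$ of a globally minimal path can overlap. As before, since $v$ and $v'$ are $\phi$-equivalent, for any $i \neq j \!\!\mod n$ the pair $\phi^i(\alpha), \phi^j(\alpha)$ falls into exactly one of the four configurations of Lemmas~\ref{paths-disjoint/thm}--\ref{paths-doubled/thm}. The key simplification under the present hypothesis $\ord_\phi(e) = n$ is noted just before the statement: Lemma~\ref{paths-doubled/thm} can only happen when $\phi^i(\alpha) = \phi^j(\alpha)$, and Lemma~\ref{paths-diagonal/thm} (the ``diagonal'' case) requires $\ord(\phi)$ even, which is now permitted. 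So the genuinely new feature, compared to the odd-order cases, is that the diagonal configuration of Lemma~\ref{paths-diagonal/thm} can actually occur, and this is precisely what produces alternative~2 of the statement.

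First I would dispose of the case where no pair $\phi^i(\alpha), \phi^j(\alpha)$ is in the diagonal situation. Then the only possible non-trivial overlaps are those of Lemma~\ref{paths-adjacent/thm}, and the argument is verbatim the one from Lemma~\ref{cycle-p/thm}: one shows the common end $\phi^j(u)$ of $\delta_{i,1}$ and $\delta_{j,k}$ must equal $\phi^j(v')$ (the equality $\phi^j(u) = \phi^i(u)$ being impossible since $\ord_\phi(u) = n$), that the shared endpoint $\phi^j(v) = \phi^i(v')$ is not shared by any further image, and hence $\cup_i \phi^i(\alpha)$ is the disjoint union of $n/\ell$ cycles as in alternative~1. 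The reduction of $\alpha$ to a single edge $a$ by $\phi$-invariant slidings (Remark~\ref{slidings/rem}) is identical to the one in Figure~\ref{cycle-pf/fig}, and I would simply refer to that proof.

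The new work is to handle the diagonal case. Suppose some pair $\phi^i(\alpha), \phi^j(\alpha)$ is as in Lemma~\ref{paths-diagonal/thm}, so $\phi^{j-i}$ swaps $v$ and $v'$; write $d = j - i$, so $\phi^d$ is the swap. The first thing I would establish is that $\alpha$ must reduce to a single edge, i.e.\ $\len(\alpha) = 1$. By Lemma~\ref{paths-diagonal/thm} the two images share all the interior paths $\delta_{i,h}$, $\delta_{j,h}$, and $\phi^d$ reverses $\alpha$; if $\len(\alpha) \geq 2$ then $\phi^d$ would fix some interior vertex or edge of $\alpha$, and the fixed edge (or an edge incident to a fixed interior vertex) would have order a proper divisor of $n$, contradicting $\ord_\phi(e) = n$. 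More carefully, the midpoint behaviour of the reversal forces either a fixed interior edge or a fixed interior vertex with two incident edges swapped, and either way Lemma~\ref{orders/thm} is violated under $\ord_\phi(e) = n$; thus $\len(\alpha) = 1$ and $\alpha = D$ is a single ``diagonal'' edge with $\phi^d(D) = \bar D$. The main obstacle I anticipate is pinning down the index $d$: I would argue that the minimal period of the swap forces $d = n/2$. Indeed $\phi^{2d}$ fixes both $v$ and $v'$, and combined with $\ord_\phi(v) = n$ (which holds since $\ord_\phi(e) = n$ for the edge $e = D$ incident to $v$) one gets $2d \equiv 0 \!\!\mod n$, so $d \equiv n/2$. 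Hence $\phi^{n/2}(D) = \bar D$, and the orbit $\{\phi^i(D)\}$ splits as the disjoint union $D \sqcup \phi(D) \sqcup \dots \sqcup \phi^{n/2-1}(D)$ of $n/2$ diagonal edges, which is exactly alternative~2.

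Finally I would verify disjointness in alternative~2: distinct edges $\phi^i(D)$ and $\phi^j(D)$ with $i \neq j \!\!\mod n/2$ cannot meet, since a shared endpoint would put them (or their reverses) in the situation of Lemma~\ref{paths-doubled/thm}, which under $\ord_\phi(e)=n$ only occurs for equal paths, contradicting $i \neq j \!\!\mod n/2$. To settle the univalent-versus-trivalent dichotomy of the two endpoints $v_0, v_1$, I would note simply that $\phi^{n/2}$ exchanges them and is a graph automorphism, so it preserves valence; therefore $v_0$ and $v_1$ have the same valence, i.e.\ both univalent or both trivalent, matching the two sides of Figure~\ref{diagonal1/fig}. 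I expect the one delicate point to be the $\len(\alpha)=1$ reduction in the diagonal case, where the interplay between the reversal $\phi^d$ and the edge-order hypothesis must be argued cleanly; everything else reuses the machinery already developed for the odd-order lemmas.
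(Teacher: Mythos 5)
Your handling of the non-diagonal case is fine and matches the paper, but your analysis of the diagonal case contains a genuine error: you claim that the situation of Lemma~\ref{paths-diagonal/thm} forces $\len(\alpha)=1$, arguing that $\phi^{j-i}$ ``reverses $\alpha$'' and hence would fix an interior vertex or edge when $\len(\alpha)\geq 2$. But Lemma~\ref{paths-diagonal/thm} does not say that $\phi^{j-i}(\alpha)=\bar\alpha$ edge-wise; it allows $\phi^i(\alpha)$ and $\phi^j(\alpha)$ to be internally disjoint (all the shared subpaths, including both terminal ones, may have length zero), in which case $\phi^{n/2}$ maps the interior of $\alpha$ into the interior of the \emph{other} path and fixes nothing. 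A concrete counterexample to your claim: take $\Gamma$ to be a $4$-cycle with a pendant terminal edge at each corner (so $g=1$, $b=4$) and $\phi$ the half-turn, $n=2$; every edge has $\ord_\phi(e)=2=n$, the minimal path $\alpha$ between $\phi$-equivalent vertices joins two opposite corners $v$ and $v'=\phi(v)$ and has length $2$, and $\phi$ swaps $v$ and $v'$ with $\alpha$ and $\phi(\alpha)$ meeting only at their endpoints. Your argument would wrongly exclude this configuration (or misclassify it as alternative~2), whereas it belongs to alternative~1.

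This is exactly the branch the paper's proof takes care of: in the diagonal case the global minimality of $\alpha$ yields a dichotomy --- either $\alpha$ is a single ``diagonal'' edge $D$ with $\phi^{n/2}(D)=\bar D$ (your alternative~2), or $\alpha$ and $\phi^{n/2}(\alpha)$ meet only at their common ends $v$ and $v'$, so that $\cup_i\,\phi^i(\alpha)$ is a disjoint union $C\sqcup\phi(C)\sqcup\dots\sqcup\phi^{n/2-1}(C)$ of $n/2$ simple cycles with $C=\alpha\cup\phi^{n/2}(\alpha)$ (disjointness of the copies following from Lemma~\ref{paths-disjoint/thm}), after which the length of $\alpha$ is reduced to $1$ by the $\phi$-invariant slidings of Lemma~\ref{cycle-p/thm}, landing in a special case of alternative~1. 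To repair your proof you must add this second branch of the dichotomy, justify via minimality why nontrivial partial overlaps (shared subpaths of positive length with $k\geq 1$ detours) cannot occur, and then invoke the $F$-equivalence reduction; your remaining steps (the computation $2(j-i)\equiv 0 \!\!\mod n$ giving $j-i=n/2$, the disjointness of the diagonal orbit, and the equal-valence observation for $v_0$, $v_1$) are correct and agree with the paper.
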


\begin{Figure}[htb]{diagonal1/fig}
\vskip-6pt
\fig{}{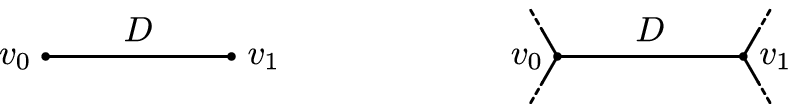}
{}{The ``diagonal'' edge $D$ in Lemma~\ref{cycle-2/thm}.}
\end{Figure}

\begin{proof}
Let $\alpha \subset \Gamma$ be an arbitrary path of minimal length among all paths joining 
any two distinct $\phi$-equivalent vertices and let $v \neq v'$ be its ends. For any $i
\neq j \!\!\mod n$, let us consider the two paths $\phi^i(\alpha)$ and $\phi^j(\alpha)$,
and the four different situations described in Lemmas~\ref{paths-disjoint/thm} to
\ref{paths-doubled/thm}, which cover all the possibilities, thanks to the
$\phi$-equivalence of $v$ and $v'$.

The situation of Lemma~\ref{paths-doubled/thm} cannot occur, due to the choice of $i$ and
$j$, while any of the three remaining situations may occur. In particular,
$\phi^i(\alpha)$ and $\phi^j(\alpha)$ can have non-empty intersection both as in
Lemma~\ref{paths-diagonal/thm} and as in Lemma~\ref{paths-adjacent/thm}.

Let us assume first that $\phi^i(\alpha)$ and $\phi^j(\alpha)$ are as in 
Lemma~\ref{paths-diagonal/thm}, i.e. they share both ends, with $\phi^i(v) = \phi^j(v')$ 
and $\phi^j(v) = \phi^i(v')$. In particular, $\phi^{j-i}$ switches $v$ and $v'$, and $j - 
i = n/2 \!\!\mod n$, since $\ord_\phi(v) = n$.

The global minimality of $\alpha$ ensures that either $\alpha$ is a single ``diagonal''
edge $D$ with $\phi^{n/2}(D) = \bar D$, or $\alpha$ and $\phi^{n/2}(\alpha)$ only meet at
\pagebreak
their common ends $v$ and $v'$. Moreover, $\alpha$ and $\phi^{n/2}(\alpha)$ are disjoint
from any other $\phi^k(\alpha)$ with $k \neq 0,n/2 \!\!\mod n$, thanks to
Lemma~\ref{paths-disjoint/thm}. If $\alpha$ is a single ``diagonal'' edge $D$, we have
that $\cup_i\, \phi^i(\alpha)$ is as in point 2 of the statement. Otherwise,
$\cup_i\,\phi^i(\alpha)$ consists in the disjoint union $C \sqcup \phi(C) \sqcup \dots
\sqcup \phi^{n/2 - 1}(C)$ of $n/2$ simple cycles (possibly a single one, for $n = 2$),
with $C = \alpha \cup \phi^{n/2}(\alpha)$. Then, arguing as in the proof of
Lemma~\ref{cycle-p/thm}, the length of $\alpha$ (and that of its images as well) may be
reduced to 1, which gives a special case of the situation described in point 1 of the
statement.

At this point, we are left with the case when $\phi^i(\alpha)$ and $\phi^j(\alpha)$ can
only meet as in Lemma~\ref{paths-adjacent/thm}. In this case, denoting by $\phi^i(u)$ with
$u \in \alpha$ the common end of $\delta_{i,1}$ and $\delta_{j,k}$, we have that
$\phi^j(u) \in \delta_{j,k}$ since $\len(\delta_{i,1}) = \len(\delta_{j,k}) \geq
\len(\alpha)/2$. Actually, $\phi^j(u)$ has to coincide with either $\phi^j(v')$ or
$\phi^i(u)$, otherwise the global minimality of $\alpha$ would be contradicted. However,
$\phi^j(u) = \phi^i(u)$ is not possible because this would imply that $\ord_\phi(u) < n$.
Thus, $\phi^j(u) = \phi^j(v')$ and we can conclude that the set $\cup_i\, \phi^i(\alpha)$
has the structure described in point 1 of the statement, by arguing as in the proof of
Lemma~\ref{cycle-p/thm}.
\end{proof}

\begin{lemma}\label{step-2/thm}
Let $\phi: \Gamma \to \Gamma$ be an automorphism of a (possibly disconnected)
uni/trivalent graph $\Gamma$, with $\ord(\phi) = n = 2^m$ and $\ord_\phi(e) = n$ for every
edge $e$ of $\,\Gamma$.\break If $\alpha \subset \Gamma$ is a (simple) path of minimal
length among all paths joining any two distinct terminal edges in a given $\phi$-orbit,
then up to $F$-equivalence we can assume $\cup_i\,\phi^i(\alpha)$ to be one of the
following:
\begin{itemize}
\item[1)] of a disjoint union
$C \sqcup \phi(C) \sqcup \dots \sqcup \phi^{n/\ell - 1}(C)$ of $n/\ell$ simple cycles
(possibly a single one for $\ell = n$), each given by a concatenation of images of
$\alpha$, having the same structure as described in Lemma~\ref{step-p/thm}, with $\alpha$
consisting of either one edge $a$ or two edges $a$ and $b$ (see Figure~\ref{onion-2/fig},
where $\ell$ is a power of $2$, $a_i$ and $b_i$ stand for $\phi^{is}(a)$ and
$\phi^{is}(b)$ respectively, and a similar notation is adopted for the terminal edges
$e_i$'s and the vertices $v_i$'s as well);

\begin{Figure}[htb]{onion-2/fig}
\vskip-6pt
\fig{}{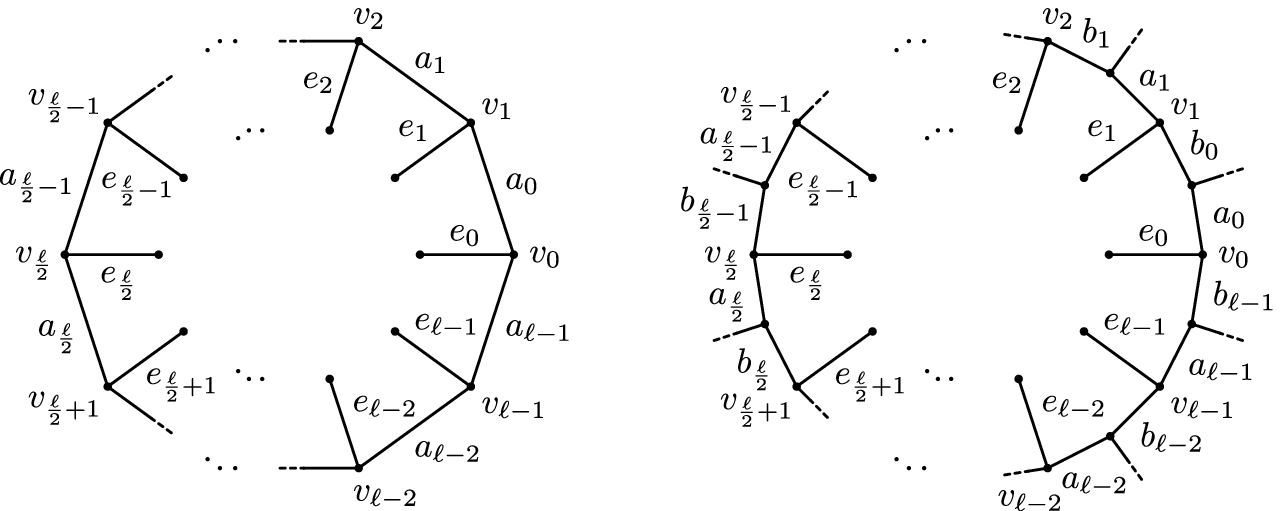}
{}{The form of the cycle $C$ in Lemma~\ref{step-2/thm}.}
\end{Figure}

\item[2)] a disjoint union $D \sqcup \phi(D) \sqcup \dots \sqcup \phi^{n/2\,-\,1}(D)$ of
$n/2$ ``diagonal'' edges (possibly a single one, for $n = 2$), where $D = \alpha$ and
$\phi^{n/2}(D) = \bar D$ (see Figure~\ref{diagonal2/fig}, where the edges $e_0$ and
$e_1$ are switched by $\phi^{n/2}$).
\end{itemize}
\end{lemma}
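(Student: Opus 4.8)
The plan is to follow the proof of Lemma~\ref{step-p/thm} almost verbatim, inserting the new ``diagonal'' alternative exactly as it was handled in the proof of Lemma~\ref{cycle-2/thm}. First I would let $e \neq e'$ be the terminal edges joined by $\alpha$ and let $v, v'$ be their unique trivalent ends, which are the ends of $\alpha$. Since $\ord_\phi(v) = \ord_\phi(e) = n$ by hypothesis, a coincidence $v = v'$ would give $e' = \phi^k(e)$ with $\phi^k(v) = v$, hence $k = 0 \!\!\mod n$ and $e' = e$, against $e \neq e'$; thus $v \neq v'$ and $\len(\alpha) \geq 1$, so there is no degenerate alternative here (contrary to the multiple-of-$3$ case). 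For $i \neq j \!\!\mod n$ I would then run through the four configurations of $\phi^i(\alpha)$ and $\phi^j(\alpha)$ of Lemmas~\ref{paths-disjoint/thm} to \ref{paths-doubled/thm}: since every vertex has order $n$, the situation of Lemma~\ref{paths-doubled/thm} would force $\phi^i(v) = \phi^j(v)$, i.e. $i = j \!\!\mod n$, and is excluded; the situation of Lemma~\ref{paths-diagonal/thm} can occur only when $\phi^{j-i}$ swaps $v$ and $v'$, which (as $\ord_\phi(v) = 2^m$) means $j - i = n/2 \!\!\mod n$ and $v' = \phi^{n/2}(v)$.

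The proof then splits according to whether such a diagonal pair exists. If $v' = \phi^{n/2}(v)$, I would argue as in the diagonal case of Lemma~\ref{cycle-2/thm}: the global minimality of $\alpha$ forces either $\alpha$ to be a single ``diagonal'' edge $D$ with $\phi^{n/2}(D) = \bar D$, which is exactly point~2 (with $e$ and $e' = \phi^{n/2}(e)$ in the roles of $e_0$ and $e_1$ of Figure~\ref{diagonal2/fig}), or $\alpha$ and $\phi^{n/2}(\alpha)$ to meet only at $v$ and $v'$. In the latter subcase Lemma~\ref{paths-disjoint/thm} yields disjointness of $\alpha \cup \phi^{n/2}(\alpha)$ from every other image $\phi^k(\alpha)$ with $k \neq 0, n/2 \!\!\mod n$, so $C = \alpha \cup \phi^{n/2}(\alpha)$ is a simple cycle; the $\phi$-invariant slidings of Remark~\ref{slidings/rem}, applied as in the proof of Lemma~\ref{step-p/thm}, then reduce $\alpha$ to one or two edges, producing the $\ell = 2$ instance of point~1.

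If no diagonal pair exists, then $\phi^i(\alpha)$ and $\phi^j(\alpha)$ can meet only as in Lemma~\ref{paths-adjacent/thm}, and I would transcribe the argument of Lemma~\ref{step-p/thm}: the shared end $\phi^j(v) = \phi^i(v')$ is met by no further image, and $\len(\delta_{i,1}) = \len(\delta_{j,k}) \geq \len(\alpha)/2$. The one new point is that the equality $\len(\delta_{i,1}) = \len(\alpha)/2$ is now impossible, since it would give $\phi^i(u) = \phi^j(u)$ for the midpoint $u$ of $\alpha$, forcing $\ord_\phi(u) < n$ against the standing hypothesis $\ord_\phi(u) = n$; this is precisely the coincidence that produced a tripod when $n$ was a multiple of $3$, and its absence explains why no tripod occurs for $n = 2^m$. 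With the strict inequality secured, the remainder of the step-$p$ argument---the disjoint union of cycles together with the reductions of Figures~\ref{onion-pf1/fig}--\ref{onion-pf4/fig} bringing $\alpha$ down to one or two edges $a$ and $b$---carries over unchanged and gives point~1. I expect the only delicate bookkeeping to be verifying that the slidings used in these reductions stay $\phi$-invariant once the terminal edges are attached, but this repeats what was already checked in Lemma~\ref{step-p/thm}.
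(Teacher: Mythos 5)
Your setup (excluding the doubled configuration via $\ord_\phi(v)=n$, forcing $j-i = n/2 \!\!\mod n$ in the diagonal configuration, and the strict inequality $\len(\delta_{i,1}) > \len(\alpha)/2$ in the adjacent case via the midpoint argument) agrees with the paper. But there is a genuine gap in your diagonal case: you claim that ``the global minimality of $\alpha$'' forces either a single diagonal edge or that $\alpha$ and $\phi^{n/2}(\alpha)$ meet only at $v$ and $v'$. That dichotomy is correct in Lemma~\ref{cycle-2/thm}, where $\alpha$ is minimal among \emph{all} paths joining distinct $\phi$-equivalent vertices, so any shared subpath creates $\phi$-equivalent branch vertices joined by a shorter path. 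Here, however, $\alpha$ is minimal only among paths joining terminal edges in a given $\phi$-orbit; a pair of branch vertices joined by a short path violates nothing, since that short path does not join terminal edges. Consequently $\alpha$ and $\phi^{n/2}(\alpha)$ may share non-trivial initial and terminal subpaths, as permitted by Lemma~\ref{paths-diagonal/thm}, and this is precisely the left-hand configuration of Figure~\ref{onion-2-pf1/fig}, to which the paper devotes the bulk of its proof: symmetric pairs of $\phi$-invariant slidings reducing the arcs to single edges (Figure~\ref{onion-2-pf2/fig}), elimination of the resulting bigons in pairs, and --- when the bigons are odd in number --- a leftover central bigon that cannot be slid away and requires the special $F$-move of Figure~\ref{onion-2-pf3/fig} to produce the $\ell = 2$ instance of point~1. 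Your proposal has no mechanism for any of this, so it simply fails to cover these configurations.

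A secondary inaccuracy: even in the subcase where the two diagonal paths meet only at their ends, the step-$p$ reductions do not ``carry over unchanged.'' The cycle $C = \alpha \cup \phi^{n/2}(\alpha)$ is stabilized by $\phi^{n/2}$ acting as a \emph{reflection} swapping its two halves, not as the rotation present in Lemma~\ref{step-p/thm}; so slidings must be performed in symmetric pairs to remain $\phi$-invariant, and for $\len(\alpha) > 1$ the paper needs the additional manipulations of Figure~\ref{onion-2-pf4/fig} (sliding everything to the external loops adjacent to $e_i$ and $e_{i+n/2}$, a $\phi$-invariant $F$-move on the central edge, and a further sliding) to reach the form of Figure~\ref{onion-2/fig}. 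One also needs the observation, used in the paper's analysis, that when $\len(\alpha)$ is even the central vertices of $\alpha_i$ and $\alpha_{i+n/2}$ must be distinct, since a common central vertex would have order $n/2$. Your adjacent-case paragraph and the identification of point~2 with the single reversed edge are fine; it is the shared-subpath diagonal analysis that must be added for the proof to be complete.
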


\begin{Figure}[htb]{diagonal2/fig}
\vskip6pt
\fig{}{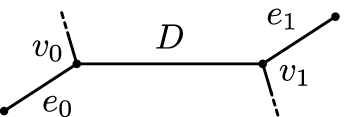}
{}{The ``diagonal'' edge $D$ in Lemma~\ref{step-2/thm}.}
\end{Figure}

\begin{proof}
Let $\alpha$ be a path as in the statement, and let $e \neq e'$ be the terminal edges it
joins. Then the ends of $\alpha$ coincide with the unique trivalent ends $v$ and $v'$ of
$e$ and $e'$ respectively. Notice that $v \neq v'$ since otherwise $\ord_\phi(v)$ would be
less than $n$.

For $i \neq j \!\!\mod n$, the two paths $\phi^i(\alpha)$ and $\phi^j(\alpha)$ can meet
both as in Lemma~\ref{paths-adjacent/thm} and as in Lemma~\ref{paths-diagonal/thm}, but 
not as in Lemma~\ref{paths-doubled/thm}. 

Let us assume first that $\phi^i(\alpha)$ and $\phi^j(\alpha)$ are as in 
Lemma~\ref{paths-diagonal/thm}, i.e. they share both ends, with $\phi^i(v) = \phi^j(v')$ 
and $\phi^j(v) = \phi^i(v')$, and their union $\phi^i(\alpha) \cup \phi^j(\alpha)$ looks 
like in Figure~\ref{paths-diagonal/fig}. In particular, $\phi^{j-i}$ switches $v$ and 
$v'$, and then $j - i = n/2 \mod n$, since $\ord_\phi(v) = n$. Moreover, 
Lemma~\ref{paths-disjoint/thm} ensures that $\phi^i(\alpha) \cup \phi^j(\alpha)$ is 
disjoint from any other path $\phi^k(\alpha)$ with $k \neq i,j \!\!\mod n$.

Hence, $\cup_i\,\phi^i(\alpha)$ is a disjoint union of $n/2$ pairs of ``diagonal'' paths,
each pair being given by $\phi^i(\alpha)$ and $\phi^{i+n/2}(\alpha)$ for some $i = 0,
\dots, n/2-1$. Figure~\ref{onion-2-pf1/fig} shows the two possible forms of
$\phi^i(\alpha) \cup \phi^{i+n/2}(\alpha)$, depending on the fact that $\alpha$ and
$\phi^{n/2}(\alpha)$ start and end with non-trivial common subpaths (left side) or not
(right side). Here, the subscripts denote the images under the corresponding power of
$\phi$, and apart from the edges $e_i$ and $e_{i+n/2}$ each arc represents a path of
edges. We notice that in both cases, if the length of the path $\alpha$ is even, then the
central vertices of the paths $\alpha_i$ and $\alpha_{i+n/2}$ have to be distinct (that
is, they have to belong to non-common subarcs), otherwise the order of such vertices would
be $n/2$.

\begin{Figure}[htb]{onion-2-pf1/fig}
\fig{}{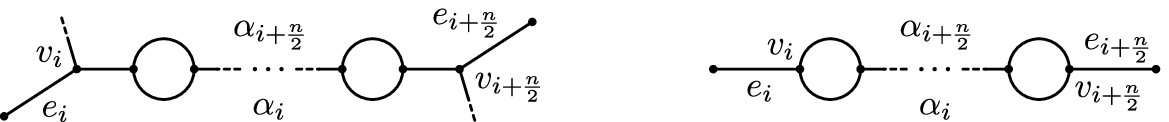}
{}{The starting form of $\phi^i(\alpha) \cup \phi^{i+n/2}(\alpha)$ in the proof of 
   Lemma~\ref{step-2/thm}.}
\end{Figure}

\begin{Figure}[b]{onion-2-pf2/fig}
\fig{}{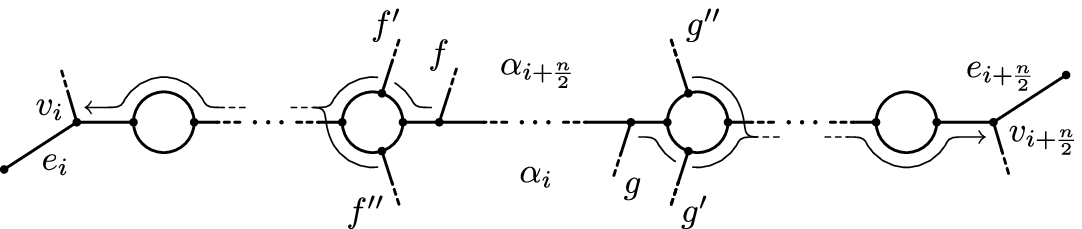}
{}{Simplifying the configuration on the left side of Figure~\ref{onion-2-pf1/fig}.}
\end{Figure}

The configuration on the left side of Figure~\ref{onion-2-pf1/fig} can be simplified by a
sequence of symmetric pairs of $\phi$-invariant slidings as indicated in
Figure~\ref{onion-2-pf2/fig} (cf. Figure~\ref{onion-pf2/fig}), in order to reduce all the
arcs to single edges. After that, all the resulting bigons, except the
central one if they are odd in number, can be modified in pairs as in
Figure~\ref{tripode3/fig} and then slided out of $\phi^i(\alpha) \cup
\phi^{i+n/2}(\alpha)$ as above. Then, in the case of an even number of bigons we end up 
with the situation described in point 1 of the statement. Otherwise, we are left with one 
bigon as shown on the left side of Figure~\ref{onion-2-pf3/fig}, and we can perform the
\pagebreak 
$F$-move described in that figure, to get the configuration on the right side of the same 
Figure~\ref{onion-2-pf3/fig}. This is the special case for $\ell = 2$ of the configuration 
on the right side of Figure~\ref{onion-2/fig} considered in point 1 of the statement.

\begin{Figure}[htb]{onion-2-pf3/fig}
\vskip-6pt
\fig{}{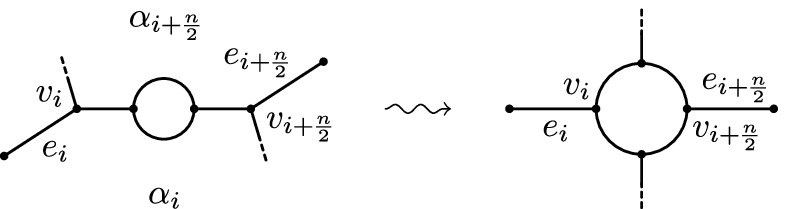}
{}{Simplified version of the configuration on the left side of
   Figure~\ref{onion-2-pf1/fig}.}
\end{Figure}

Now, look at the configuration on the right side of Figure~\ref{onion-2-pf1/fig}. If 
$\len(\alpha) = 1$, that is $\phi^i(\alpha) \cup \phi^{i+n/2}(\alpha)$ consists of a 
single bigon, then we just have the special case for $\ell = 2$ of the configuration on 
the left side of Figure~\ref{onion-2/fig} considered in point 1 of the statement. If 
$\len(\alpha) > 1$, then we can simplify the configuration by the same argument as above, 
but sliding everything to the most external loops adjacent to $e_i$ and $e_{i + n/2}$.
The result is shown on the left side of Figure~\ref{onion-2-pf4/fig}. We can change it 
into the configuration on the right side of the figure, by a $\phi$-invariant $F$-move on 
the central edge, and then to the configuration on the right side of 
Figure~\ref{onion-2-pf3/fig}, by a further $\phi$-invariant sliding (reducing the length 
of $\alpha$ to 2). So, we get once again a special case of the situation described in 
point 1 of the statement.

\begin{Figure}[htb]{onion-2-pf4/fig}
\fig{}{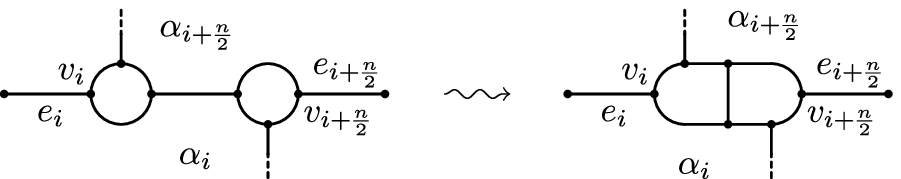}
{}{Simplified version of the configuration on the right side of 
   Figure~\ref{onion-2-pf1/fig}.}
\end{Figure}

At this point, we are left with the case when $\phi^i(\alpha)$ and $\phi^j(\alpha)$ can
only meet as in Lemma~\ref{paths-adjacent/thm}. In this case, then the fact that $n$ is
even ensures that their common end $\phi^j(v) = \phi^i(v')$ cannot be shared by any other
$\phi^h(\alpha)$ and that $\len(\delta_{i,1}) = \len(\delta_{j,k}) > \len(\alpha)/2$ (cf.
Figure \ref{paths-adjacent/fig}). Therefore, the same arguments as in
Lemma~\ref{cycle-p/thm} ensure that the situation can be reduced to the one described in
point 1 of the statement.
\end{proof}

\begin{proposition}\label{onion-2/thm}
Let $\phi: \Gamma \to \Gamma$ be a non-trivial automorphism in $\A_{g,b}$, with
$\ord(\phi) = n = 2^m$. Then, up to $F$-equivalence and composition with elementary
automorphisms, we can reduce $\phi$ to the composition $\tau \circ \sigma$ of two
automorphisms $\sigma, \tau \in \A_{g,b}$ such that $\ord(\sigma) = \ord(\tau) = 2$.
Moreover, $\sigma$ can be assumed to fix an edge and $\tau$ can be assumed to reverse an
invariant edge.
\end{proposition}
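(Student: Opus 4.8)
The plan is to run the same recursive ``onion'' construction that proves Propositions~\ref{onion-p/thm} and~\ref{onion-3/thm}, but now fed by Lemmas~\ref{cycle-2/thm} and~\ref{step-2/thm} and preceded by the normalization of Lemma~\ref{reduction/thm}. First I would apply Lemma~\ref{reduction/thm} to replace $\phi$, up to $F$-equivalence and composition with switches, by an automorphism $\psi\colon\Gamma'\to\Gamma'$ with $\ord_\psi(e)=\ord(\psi)=2^k$ for every edge $e$; since $\E_{g,b}$ is closed under composition with elementary automorphisms and under $F$-equivalence, it is enough to factor this normalized $\psi$, and if $\psi$ is trivial then $\phi\in\E_{g,b}$ already. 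I may thus assume $\ord_\phi(e)=n=2^m$ for every edge, whence $\ord_\phi(v)=n$ for every vertex by Lemma~\ref{orders/thm}, which is exactly the hypothesis of Lemmas~\ref{cycle-2/thm} and~\ref{step-2/thm}.

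I would then build recursively automorphisms $\phi_i\colon\Gamma_i\to\Gamma_i$, all $F$-equivalent to $\phi$, together with $\phi_i$-invariant subgraphs $\emptyset=\Lambda_0\varsubsetneq\dots\varsubsetneq\Lambda_r=\Gamma_r$ and involutions $\sigma_i,\tau_i\colon\Lambda_i\to\Lambda_i$ with $\tau_i\circ\sigma_i=\phi_{i|\Lambda_i}$, keeping $\Delta_i=\Cl(\Gamma_i-\Lambda_i)$ uni/trivalent and meeting $\Lambda_i$ in a single $\phi_i$-orbit $U_i$ of cardinality $n$, exactly as in points~1 to~4 of the proof of Proposition~\ref{onion-p/thm}. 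The starting step is governed by Lemma~\ref{cycle-2/thm} and each recursive step by Lemma~\ref{step-2/thm}. When either lemma returns the simple-cycle configuration (point~1), the step is verbatim the one of Proposition~\ref{onion-p/thm}: on an orbit $\{\phi^j(v)\}$ one sets $\sigma(\phi^j(v))=\phi^{-j}(v)$ and $\tau(\phi^j(v))=\phi^{1-j}(v)$, which are commuting involutions composing to $\phi$ and preserving the adjacency ``$|j-k|\equiv s\pmod n$''. The genuinely new case is the ``diagonal'' configuration (point~2), where $\cup_i\phi^i(\alpha)$ is a perfect matching of $n/2$ disjoint edges with $\phi^{n/2}(D)=\bar D$, so that the two ends of a matching edge are $\phi^j(v)$ and $\phi^{j+n/2}(v)$; here adjacency reads ``$|j-k|\equiv n/2\pmod n$'', and the same formulas once again define commuting involutions factoring $\phi$ and respecting this relation. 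In the terminal-edge version one additionally attaches the $\phi_i$-orbit of the terminal edge $e$ joining $u_{i-1}$ to a trivalent $v$ and extends $\sigma_{i-1},\tau_{i-1}$ by the same rule.

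The delicate point, which I expect to be the main obstacle, is the ``moreover'' clause, because of a parity phenomenon absent for odd order. Since $\tau$ preserves each $\phi$-orbit setwise, $\tau$ can reverse an edge only by swapping two ends in one orbit, i.e.\ across an edge of ``step'' $s$ with $\phi^a(v)\leftrightarrow\phi^{1-a}(v)$, which requires $s$ odd for the standard shift; and as $\ell$ is forced to be a power of $2$ dividing $n$, one has $s=tn/\ell$ odd exactly when $\ell=n$, while on a diagonal matching $s=n/2$ yields no reversed edge once $4\mid n$. On the other side $\sigma$ fixes the terminal edge $e$ precisely when it fixes both ends of $e$, which pins the parity of the global shift. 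My resolution would be to fix the shift so that $\sigma_r$ fixes the terminal edge it preserves at $u_r$, and to secure the edge reversed by $\tau_r$ at a stage carrying a full cycle $(\ell=n)$ or the $n=2$ diagonal, where the complementary parity makes $\tau$ swap two same-orbit ends; such a stage is available because the connectedness of $\Gamma$ and the presence of a trivalent vertex force $r\ge 2$, a pure diagonal matching being a disjoint union of $n/2$ edges that can never fill $\Gamma_r$. I would close by checking the small cases (in particular $n=2$, where $\tau_1$ already reverses $D$, and the univalent versus trivalent alternatives of Lemmas~\ref{cycle-2/thm} and~\ref{step-2/thm}) and by invoking Lemma~\ref{ordinv/thm} to keep $\ord(\phi_i)=n$ and $|U_i|=n$ throughout, so that the recursion terminates with $\Lambda_r=\Gamma_r$ and $\tau_r\circ\sigma_r=\phi_r$ $F$-equivalent to $\phi$.
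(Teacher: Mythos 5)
Your proposal follows the paper's proof in all structural respects: first normalize via Lemma~\ref{reduction/thm} so that $\ord_\phi(e)=n$ for every edge, then run the recursive construction of Proposition~\ref{onion-p/thm} fed by Lemmas~\ref{cycle-2/thm} and~\ref{step-2/thm}, treating the new diagonal configurations by the same reflection formulas $\sigma(\phi^j(u))=\phi^{-j}(u)$, $\tau(\phi^j(u))=\phi^{1-j}(u)$. Two small inaccuracies before the main point: $\sigma$ and $\tau$ are not \emph{commuting} involutions (one has $\tau\circ\sigma=\phi$ but $\sigma\circ\tau=\phi^{-1}$, and these differ for $m\ge 2$); and in a diagonal stage you cannot simply take $\Lambda_i$ to contain the matching edges together with their ends, since those ends would become bivalent vertices of $\Delta_i=\Cl(\Gamma_i-\Lambda_i)$, destroying the uni/trivalent bookkeeping -- this is why the paper takes for $\Lambda_1$ the stars of the barycenters of the diagonal edges in the \emph{second barycentric subdivision}, so that $\Delta_i$ stays uni/trivalent and $U_i$ remains a single orbit of free ends of cardinality $n$ (its point~{\sl a\/}), and why $\sigma_i$ now fixes \emph{two} vertices $u_i$ and $\phi_i^{n/2}(u_i)$ of $U_i$ (its point~{\sl c\/}).

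The genuine gap is in the ``moreover'' clause, exactly where you anticipated trouble. Your parity analysis is correct: $\tau$ reverses an edge of a cycle configuration only when the shift $s=t\,n/\ell$ is odd, which for $n=2^m$ forces $\ell=n$, and a diagonal matching yields no reversed edge once $4\mid n$. But your availability argument -- that connectedness and a trivalent vertex force $r\ge 2$, a pure diagonal matching never filling $\Gamma_r$ -- does not produce the required stage. A recursion consisting of a diagonal start, several cycle stages with $\ell<n$ (whose shifts are even, so by your own computation the corresponding $\tau_i$ reverse no invariant edge), and a terminating step of component type or terminal-diagonal type (where $\tau_r$ sends the $j$-th piece to the $(1-j)$-th and, $n$ being even, leaves nothing invariant) satisfies everything you establish, has $r\ge 2$, and still leaves $\tau_r$ with no reversed edge. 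What is needed, and what the paper supplies in its closing observation together with its point~{\sl e\/} (only the last $\tau_r$ is required to reverse; the intermediate $\tau_i$ are exempted), is a connectedness argument pinned to the shifts themselves: each stage joins vertices of the current orbit $U_{i-1}$ whose $\phi$-exponents differ by multiples of that stage's shift (or by $n/2$ in the diagonal cases), so if every shift occurring in the recursion were even, exponents of different parity would never be connected and $\Gamma$ would be disconnected; hence the situation of point~1 of Lemma~\ref{step-2/thm} with odd shift must occur at some step, there $\tau_i$ reverses two invariant edges, and since each subsequent $\tau_i$ extends the previous one, the reversed edge persists to $\tau_r$. Without this (or an equivalent) argument, your proof of the final clause for $\tau$ is incomplete.
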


\begin{proof}
Given $\phi: \Gamma \to \Gamma$ as in the statement, thanks to Lemma~\ref{reduction/thm}
we can as\-sume that $\ord_\phi(e) = n$ for every edge $e$ of $\,\Gamma$. Then, the same
recursive construction of the proof of Proposition~\ref{onion-p/thm} will provide a
sequence of automorphisms $\phi_i: \Gamma_i \to \Gamma_i$ and a sequence of subgraphs
$\Lambda_i \subset \Gamma_i$ with $i = 0, \dots, r$, satisfying the properties required in
that proof, except for the following facts: 
\begin{itemize}
\item[{\sl a\/})] $\Lambda_1$ is allowed to be a subgraph of the second barycentric
subdivision of $\Gamma_1$ instead that a subgraph of $\Gamma_1$ itself;

\pagebreak

\item[{\sl b\/})] $\ord_{\phi_i}(e) = n$ for every edge $e$ of $\Gamma_i$ and every $i = 
1, \dots, r$;
\item[{\sl c})] $\sigma_i$ fixes exactly two vertices $u_i$ and $\phi_i^{n/2}(u_i)$ in 
$U_i$, for every $i = 0, \dots, r-1$; 
\item[{\sl d})] $\sigma_i$ fixes two edges of $\Gamma_i$, but it is not required to
reverse any invariant edge, for every $2 \leq i \leq r$;
\item[{\sl e})] the $\tau_i$'s are not required to fix or reverse any edge, except for
$\tau_r$ (the last one) that has to reverse an invariant edge.
\end{itemize}
In particular, all the $\phi_i$'s are $F$-equivalent to $\phi$, and all the restrictions
$\phi_{i|\Lambda_i}$ admit a factorization $\tau_i \circ \sigma_i$ into two involutions of
$\Lambda_i$. Hence, $\phi$ turns out to be $F$-equivalent to $\phi_r = \tau_r \circ
\sigma_r$ (being $\Lambda_r = \Gamma_r$), which proves the proposition.

\smallskip

The starting step of the recursion is provided by Lemma~\ref{cycle-2/thm}. If the
situation described in point 1 of that lemma occurs, we define the automorphism $\phi_1:
\Gamma_1 \to \Gamma_1$, the subgraph $\Lambda_1 \subset \Gamma_1$, the two involutions
$\sigma_1,\tau_1: \Lambda_1 \to \Lambda_1$ and the vertex $u_1$, as in the proof of
Proposition~\ref{onion-p/thm}. Otherwise, the situation described in point 2 of
Lemma~\ref{cycle-2/thm} occurs. In this case, there exists a $\phi$-invariant disjoint
union of $n/2$ ``diagonal'' edges $D \sqcup \phi(D) \sqcup \dots \sqcup
\phi^{n/2\,-\,1}(D) \subset \Gamma$, with $\phi^{n/2}(D) = \bar D$. Then, we put $\Gamma_1
= \Gamma_0 = \Gamma$, $\phi_1 = \phi_0 = \phi$ and $\Lambda_1 = D' \sqcup \phi(D') \sqcup
\dots \sqcup \phi^{n/2\,-\,1}(D') \subset \Gamma_1$, where $D' \subset D$ is the start of
the barycenter of $D$ in the second barycentric subdivision of $\Gamma$. Moreover, we
denote by $u$ any one of the two ends of $D'$, and we define $\sigma_1$ and $\tau_1$ by
putting $\sigma_1(\phi^j(u)) = \phi^{-j}(u)$ and $\tau_1(\phi^j(u)) = \phi^{1-j}(u)$ for
$j = 0, \dots, n-1$.

In order to conclude the starting step, it is enough to observe that: $\ord_{\phi_1}(e) =
n$ for every edge $e$ of $\Gamma_1$; $\sigma_1$ and $\tau_1$ are well-defined involutive
automophisms of $\Lambda_1$, since they preserve the adjacency of vertices, being
$\phi^j(u)$ adjacent to $\phi^k(u)$ if and only if $j = k \!\!\mod n/2$; $\tau_1 \circ
\sigma_1$ coincides with the restriction $\phi_{1|\Lambda_1}$; $\Delta_1 = \Cl(\Gamma_1 -
\Lambda_1)$ is a uni/trivalent graph; $U_1 = \Delta_1 \cap \Lambda_1$ consists of the
$\phi_1$-orbit of the univalent vertex $u$ of $\Delta_1$ and it has cardinality $n$; $u_1
= u$ and $\phi^{n/2} (u_1)$ are the only vertices of $U_1$ fixed by $\sigma_1$.

Now, to realize the recursive step of the construction, assume we are given $\phi_{i-1}:
\Gamma_{i-1} \to \Gamma_{i-1}$, $u_{i-1} \in U_{i-1} \subset \Lambda_{i-1} \varsubsetneq
\Gamma_{i-1}$ and $\sigma_{i-1},\tau_{i-1}: \Lambda_{i-1} \to \Lambda_{i-1}$ with the
properties required in the proof of Proposition~\ref{onion-p/thm} for $i-1 < r$,
integrated by the points {\sl a\/} to {\sl e\/} said above. In particular, the restriction 
of $\phi_{i-1}$ to the uni/tri\-valent graph $\Delta_{i-1} = \Cl(\Gamma_{i-1} - 
\Lambda_{i-1})$ is an automorphism of order $n$, and the order of each edge of 
$\Delta_{i-1}$ with respect to it is $n$ as well.

If two of the terminal edges of $\Delta_{i-1}$ ending at vertices in $U_{i-1}$ coincide, 
then there are $n/2$ such terminal edges and each of them is a ``diagonal'' edge reversed 
by $\phi_{i-1}^{n/2}$. In this case, denoting by $D$ the ``diagonal'' edge between 
$u_{i-1}$ and $\phi_{i-1}^{n/2}(u_{i-1})$, we put $\Gamma_i = \Gamma_{i-1}$, $\phi_i = 
\phi_{i-1}$ and $\Lambda_i = \Lambda_{i-1} \cup_j\,\phi^j_{i-1}(D)$, where the last 
equality is due to the connectedness of $\Gamma_i$. Moreover, we define $\sigma_i$ and 
$\tau_i$ to be the unique extensions of $\sigma_{i-1}$ and $\tau_{i-1}$ to $\Gamma_i$. 
These can be easily seen to exist and to verify all the required properties for $i = r$,
except for the fact that $\tau_r$ reverses some invariant edge, which\break will be proved 
at the end of the proof. Hence, this step terminates the recursion.

Then, let the terminal edges of $\Delta_{i-1}$ ending at vertices in $U_{i-1}$ be all
distinct.

If $\Delta_{i-1}$ does not contain any path joining two different such terminal edges, 
then we can terminate the recursion with the $i$-th step, in the same way as in the proof 
of Proposition~\ref{onion-p/thm}. Once again, we postpone at the end of the proof the 
verification that $\tau_r$ reverses some invariant edge.

Otherwise, if a path $\alpha \subset \Delta_{i-1}$ exists joining different terminal edges
of $\Delta_{i-1}$ ending at vertices in $U_{i-1}$, then we can choose it to be minimal
(hence simple) and apply Lemma \ref{step-2/thm} to the restriction of $\phi_{i-1}$ to
$\Delta_{i-1}$ and to such a minimal $\alpha$, in order to get the structure described in
that statement for $\cup_j\, \phi_{i-1}^j(\alpha) \subset \Delta_{i-1}$ up to
$F$-equivalence. Such an $F$-equivalence only involves internal edges of $\Delta_{i-1}$,
hence it does not change the set of free ends $U_{i-1} \subset \Delta_{i-1}$ and the
restriction of $\phi_{i-1|\Delta_{i-1}}$. Therefore, it can be extended to an
$F$-equivalence of the whole $\phi_{i-1}$ on $\Gamma_{i-1}$, which leaves $\Lambda_{i-1}$ 
and the restriction $\phi_{i-1|\Lambda_{i-1}}$ unchanged.

As a result, we get a new uni/trivalent graph $\Gamma_i$, such that $\Lambda_{i-1} \subset
\Gamma_i$ and a new automorphism $\phi_i: \Gamma_i \to \Gamma_i$, which is $F$-equivalent
to $\phi_{i-1}$ and coincides with $\phi_{i-1}$ on $\Lambda_{i-1}$. We also get a new
minimal path $\alpha \subset \Cl(\Gamma_i - \Lambda_{i-1})$, which joins different
$\phi_i$-equivalent terminal edges of $\Cl(\Gamma_i - \Lambda_{i-1})$ ending at vertices
in $U_{i-1}$, such that $\cup_j\, \phi_i^j(\alpha)$ itself (no more up to
$F$-equivalence) has the structure stated in Lemma~\ref{step-2/thm}. Then, we proceed by 
distinguishing the different situations described in points 1 and 2 of that lemma.

If the situation of point~1 occurs, then the $i$-th step is the same as in the proof of
Proposition~\ref{onion-p/thm}, but now $\sigma_i$ fixes two edges and $\tau_i$ reverses
two invariant edges.

In the case when the situation of point~2 occurs, we denote by $e$ the terminal edge of
$\Cl(\Gamma_i - \Lambda_{i-1})$ whose free end is $u_{i-1}$. Then, we choose $u_i$ to be
the other end of $e$ and put $\Lambda_i = \Lambda_{i-1} \cup_j\, \phi_i^j(e) \cup_j\,
\phi_i^j(D)$, in such a way that $\Delta_i = \Cl(\Gamma_i - \Lambda_i)$ is a uni/trivalent
graph, which meets $\Lambda_i$ at the $\phi_i$-orbit $U_i$ of $u_i$. Moreover, we define
$\sigma_i$ and $\tau_i$ to be the unique automorphisms of $\Lambda_i$ extending
$\sigma_{i-1}$ and $\tau_{i-1}$ respectively. To see that such extensions exist, we first
define $\sigma_i(\phi_i^j)(u_i) = \phi_i^{-j}(u_i)$ and $\tau_i(\phi_i^j)(u_i) =
\phi_i^{1-j}(u_i)$, for every $j = 0, \dots, n-1$, and then we verify that these
definitions, together with the previous definitions of $\sigma_{i-1}$ and $\tau_{i-1}$,
preserve adjacency of vertices in $\Lambda_i$. Notice that $\tau_i \circ \sigma_i$
trivially coincides with $\phi_{i|\Lambda_i}$. Furthermore, $\sigma_i$ fixes only the
vertices $u_i$ and $\phi_i^{n/2}(u_i)$ in the $\phi_i$-orbit $U_i$, and it fixes the edges
$e$ and $\phi^{n/2}(e)$. On the contrary, $\tau_i$ neither fixes nor reverses any edge in
the $\phi_i$-orbits of $e$ and $D$. This conclude the recursive step.

Finally, we observe that the situation of point 1 of Lemma~\ref{step-2/thm} occurs in at 
least one of the recursive steps due to the connectedness of $\Gamma$, and from that 
step on all the $\tau_i$ reverse some invariant edge. In particular, this is true for 
$\tau_r$.
\end{proof}

At this point, taking into account Propositions~\ref{onion-p/thm}, \ref{onion-3/thm} and
\ref{onion-2/thm}, we are left to show that any automorphism in $\A_{g,b}$ belongs to
$\E_{g,b}$, in the special case when it has order 2 and fixes or reverses some edge. This
is the content of our last proposition.

\begin{proposition}\label{automorphism-2/thm}
Let $\phi:\Gamma \to \Gamma$ be an automorphism in $\A_{g,b}$ of order $2$, which either
fixes an edge or reverses an invariant edge. Then, up to $F$-equivalence and composition
with elementary switches, we can reduce $\phi$ to the identity.
\end{proposition}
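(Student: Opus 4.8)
The plan is to argue by induction on a complexity measure of $\phi$ — for instance the number of edges of $\Gamma$ that are not fixed pointwise — peeling off one swapped pair at a time. At each stage I would first bring a swapped pair into a \emph{switchable} normal form by $\phi$-invariant $F$-moves, and then compose $\phi$ with the corresponding elementary switch. Since $\E_{g,b}$ is closed under composition and $F$-equivalence and contains every switch, it is enough to exhibit, at each step, a terminal or internal switch $S$ such that $S\circ\phi$ is an automorphism of order $\leq 2$ that still fixes or reverses an edge but has strictly smaller complexity; as $S\circ(S\circ\phi)=\phi$, the claim for $S\circ\phi$ gives the claim for $\phi$, and the identity is the base case.

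The hypothesis is used to anchor the reduction. The fixed or reversed edge guarantees that the fixed subgraph $A\subset\Gamma$ — the union of the pointwise fixed vertices and edges together with the midpoints of the reversed invariant edges — is non-empty, so $\phi$ behaves like a reflection across $A$ rather than a free involution. If $\phi$ is already trivial we are done; otherwise there is a vertex $w$ with $\phi(w)\neq w$, and I would take a minimal path $\alpha$ joining $w$ to $\phi(w)$. Because $\phi$ swaps the two ends of $\alpha$, the pair $\alpha,\phi(\alpha)$ falls under the diagonal configuration of Lemma~\ref{paths-diagonal/thm} (the doubled case of Lemma~\ref{paths-doubled/thm} is excluded, since $w$ is not fixed), and exactly the $\phi$-invariant slidings used in Lemmas~\ref{cycle-2/thm} and \ref{step-2/thm} reduce the swapped branch issuing from $w$ to its simplest shape: a ``diagonal'' edge $D$ with $\phi(D)=\bar D$, a pair of parallel edges, or a pair of terminal edges sitting next to $A$.

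Once a swapped pair is in such a normal form, I would eliminate it by an elementary switch: two swapped terminal edges at a fixed vertex are removed by a terminal switch, and two swapped parallel edges (a bigon) by an internal switch, after first converting a diagonal edge into a bigon by one further $\phi$-invariant $F$-move of the type in Figure~\ref{onion-2-pf3/fig}. Composing $\phi$ with this switch cancels the swap while keeping the order at most $2$ and preserving a fixed or reversed edge on $A$, so the complexity strictly decreases and the induction proceeds. When no swapped pair survives, every vertex is fixed and $\phi$ can only flip some invariant loops; each such loop-flip is disposed of by opening the loop with an $F$-move on the incident non-loop edge and then applying a switch, once more reaching the identity.

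I expect the main obstacle to be the bookkeeping needed to make the complexity strictly drop in every degenerate situation while keeping the moves genuinely $\phi$-invariant. In particular, converting the diagonal-edge and bigon normal forms into an honestly switchable pair without re-enlarging the non-fixed part, and treating the low-genus configurations (reversed loops, double edges, and the triangles produced by folding a swapped pair) where the naive single $F$-move is not $\phi$-invariant and must be replaced by a symmetric pair of moves as in Figures~\ref{onion-2-pf2/fig} and \ref{onion-2-pf4/fig}, are the delicate points. The remaining care is to check that the seed edge required by the hypothesis can always be kept throughout the induction, so that the statement of the proposition applies at each recursive stage.
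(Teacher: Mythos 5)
Your overall engine --- kill a $\phi$-swapped pair of edges by composing with the corresponding switch, and use a $\phi$-invariant $F$-move when no switch applies --- is indeed the mechanism behind the paper's argument, but you have overlooked that the paper has already packaged it: its proof of this proposition is two lines long. If $\phi$ fixes an edge $e$ pointwise, then $\min_{e}\ord_\phi(e)=2^0$, and Lemma~\ref{reduction/thm} (already proved, by induction on the pair $(m,n_m)$) immediately reduces $\phi$, up to $F$-equivalence and composition with elementary automorphisms, to an automorphism of order $2^0=1$, that is the identity. If instead $\phi$ reverses an invariant edge $e$, a single $\phi$-invariant elementary move $F_{e,e'}$ yields an automorphism $\phi'$ that fixes the new edge $e'$ pointwise, reducing to the first case. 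Note that this goes in the opposite direction from your plan: the reversed ``diagonal'' edge should be straightened into a \emph{fixed} edge by one $F$-move, not converted into a bigon to be switched; your direction of conversion creates work instead of removing it.

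The concrete gaps in your sketch are in the normalization step. Lemmas~\ref{cycle-2/thm} and \ref{step-2/thm}, whose slidings you invoke, assume $\ord_\phi(e)=n$ for \emph{every} edge of $\Gamma$, and that hypothesis fails here by design, since the hypothesis edge has $\ord_\phi(e)=1$ (or is a reversed invariant edge); so those lemmas cannot be quoted, and all the slidings would have to be re-justified in the presence of the fixed subgraph $A$ --- exactly the ``delicate points'' you defer. The paper's Lemma~\ref{reduction/thm} sidesteps minimal paths and normal forms entirely: by Lemma~\ref{orders/thm} and connectivity, when $\phi\neq\id$ has order $2$ and fixes an edge, the edge orders take both values $1$ and $2$, so there is a fixed trivalent vertex $v$ carrying one fixed edge and a swapped pair $e_1,e_2$. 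If the far ends of $e_1,e_2$ are univalent or coincide, then $S_{e_1,e_2}\circ\phi$ strictly lowers the number of order-$2$ edges; here $S_{e_1,e_2}$ commutes with $\phi$ because $\{e_1,e_2\}$ is a $\phi$-orbit, so the order stays $\leq 2$ --- a point your sketch asserts (``keeping the order at most $2$'') but never verifies. If the far ends are distinct trivalent vertices, no switch exists, and the invariant move of Figure~\ref{reduction/fig} replaces the swapped pair by two pointwise-fixed edges directly, with no bigon or diagonal normal form needed. So your strategy can in principle be completed, but as written it rebuilds, with lemmas used outside their hypotheses, a reduction the paper already possesses and applies verbatim.
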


\begin{proof}
If $\phi$ fixes an edge $e$, then Lemma~\ref{reduction/thm} ensures that up to
$F$-equivalence and composition with elementary automorphisms, $\phi$ can be reduced to an
automorphism $\psi: \Gamma' \to \Gamma'$ such that $\ord(\psi) = 1$, that is $\psi$
reduces to the identity. On the other hand, if $\phi$ reverses an invariant edge $e$, we 
can perform a $\phi$-invariant $F$-move $F_{e,e'}$, in order to get a new automorphism
$\phi': \Gamma' \to \Gamma'$, which fixes the edge $e'$ of $\Gamma'$.
\end{proof}

\newpage

\bibliographystyle{amsplain}

\begin{thebibliography}{10}

\bibitem{BP08}
S.~Benvenuti and R.~Piergallini,
{\sl The complex of pant decompositions of a surface}, 
Topology Appl. {\bf 156} (2008), 399--419.

\bibitem{BP12}
S.~Benvenuti and R.~Piergallini,
{\sl A note on the complex of pant decompositions of a surface}, 
in preparation.

\bibitem{Ca04}
P.J.~Cameron,
{\sl Automorphisms of graphs}, 
in Topics in Algebraic Graph Theory, Cambridge University Press (2004), 137--155.

\bibitem{CD02}
M.D.E.~Conder and P.~Dobcs\'anyi, 
{\sl Trivalent symmetric graphs up to 768 vertices}, 
J. Math. Combin. Comput. {\bf 40} (2002), 41--63.

\bibitem{FK06}
Y.Q.~Feng and J.H.~Kwak,
{\sl Cubic symmetric graphs of order twice an odd prime-power}, 
J. Aust. Math. Soc. {\bf 81} (2006), 153--164.

\bibitem{FK07}
Y.Q.~Feng and J.H.~Kwak, 
{\sl Cubic symmetric graphs of order a small number times a prime or a prime square}, 
J. Comb. Theory. {\bf 97} (2007), 627--646.

\bibitem{Go80}
D.~Goldschmidt,
{\sl Automorphisms of trivalent graphs}, 
Annals of Math. {\bf 111} (1980), 377--406.

\bibitem{HT80}
A.~Hatcher and W.~Thurston,
{\sl A presentation for the mapping class group of a closed orientable surface}, 
Topology {\bf 19} (1980), 221--237.

\bibitem{Tu48}
W.T.~Tutte, 
{\sl A family of cubical graphs}, 
Proc. Camb. Phil. Soc. {\bf 43} (1948), 459--474.
 
\bibitem{Tu59}
W.T.~Tutte,
{\sl On the symmetry of cubic graphs}, 
Canad. J. Math. {\bf 11} (1959), 621--624.

\end{thebibliography}

\end{document}